\documentclass[11pt]{article}

\usepackage[margin=1in]{geometry}
\usepackage{amsmath,amssymb,amsthm,mathtools}
\usepackage{mathrsfs}
\usepackage{enumitem}
\usepackage[colorlinks=true,linkcolor=blue,citecolor=blue,urlcolor=blue]{hyperref}
\usepackage{xcolor}
\usepackage[nameinlink,noabbrev,capitalize]{cleveref}
\usepackage{algorithm}
\usepackage{algpseudocode}

\usepackage{graphicx}
\usepackage{subcaption} 

\usepackage{color-edits}
\addauthor{zx}{orange}

\theoremstyle{definition}
\newtheorem{definition}{Definition}[section]
 
\newtheorem{condition}[definition]{Condition}
\theoremstyle{plain}
\newtheorem{theorem}[definition]{Theorem}
\newtheorem{lemma}[definition]{Lemma}
\newtheorem{proposition}[definition]{Proposition}
\newtheorem{corollary}[definition]{Corollary}
\newtheorem{question}{Question}
\newtheorem{problem}{Problem}

\theoremstyle{remark}
\newtheorem{remark}[definition]{Remark}
\newtheorem{example}[definition]{Example}

\newcommand{\R}{\mathbb{R}}
\newcommand{\X}{\mathcal{X}}
\newcommand{\ip}[2]{\left\langle #1,\,#2\right\rangle}

\newcommand{\sign}{\operatorname{sign}}
\newcommand{\dist}{\operatorname{dist}}

\newcommand{\calS}{\mathcal{S}}
\newcommand{\calL}{\mathcal{L}}
\newcommand{\DeltaLS}{\Delta_{\mathrm{LS}}}

\title{Function-free Optimization via Comparison Oracles}
\author{Katya Scheinberg\thanks{H. Milton School of Industrial and Systems Engineering, Georgia Institute of Technology, Atlanta, Georgia, USA.   \href{mailto:katya.scheinberg@isye.gatech.edu}{katya.scheinberg@isye.gatech.edu}} \and Zikai Xiong\thanks{H. Milton School of Industrial and Systems Engineering, Georgia Institute of Technology, Atlanta, Georgia, USA.  \href{mailto:zxiong84@gatech.edu}{zxiong84@gatech.edu}}}
\date{May 2026}

\begin{document}
\maketitle

\begin{abstract}
In this work, we study optimization specified only through a comparison oracle: given two points, it reports which one is preferred. We call it function-free optimization because we do not assume access to, nor the existence of, a canonical application-given objective function. Instead, our goal is to find a most-preferred feasible point, which we call an optimal solution. This model arises in preference- and ranking-based settings, where the objective values and derivatives are unavailable, meaningless, or non-identifiable. Even if a representative function exists for the preference relation, it may be nonsmooth, nonconvex, or even discontinuous. We develop an analytical and algorithmic framework based on the geometry of preference level sets, which remains well-defined from comparisons alone. We introduce a new optimality measure, the level-set optimality gap, defined as the distance from the preference level set to the optimal solutions, and also the regularity radius, which plays the role of a stationarity certificate. Under regularity of the preference relation in a $d$-dimensional Euclidean space, we propose a method for estimating normal directions to accuracy $\epsilon$ using $O(d \, \log(d/\epsilon))$ comparisons, nearly matching a lower bound of $\Omega(d\, \log(1/\epsilon))$. Under convexity and regularity of the preference relation and a local growth condition on the regularity radius, the resulting normal direction descent method reaches an $\epsilon$ level-set optimality gap using at most $\widetilde O(dD^2/\epsilon^2)$ comparisons, over $O(D^2/\epsilon^2)$ normal direction estimation steps. Here $D$ is the distance from the initial point to the optimal set. This number of normal direction estimation steps matches the lower bound of $\Omega(D^2/\epsilon^2)$ for normal direction span-based methods. Since prior knowledge in practical applications of function-free optimization is usually very limited, we also develop adaptive schemes for both estimating the normal direction and solving the optimization problem. These adaptive schemes match the fixed-parameter complexity bounds up to logarithmic factors.
\end{abstract}

\textbf{Key words:} derivative-free optimization, comparison oracle, convex optimization, oracle complexity

\textbf{MSC codes:} 90C56, 90C60, 90C25


\section{Introduction}
\label{sec:introduction} 
Comparison- and preference-based feedback is often the natural form of information in applications where objective values are unavailable or not clearly defined. In search and ranking systems, user behavior can reveal relative preferences rather than utility values \cite{yue2009interactive,yue2012karmed}. Related preference-based optimization models also arise in recommender systems and A/B testing, where a latent objective is queried through pairwise comparisons \cite{gonzalez2017preferential}. In human-in-the-loop personalization and clinical tuning, comparisons between alternatives are more reliable than numerical scoring \cite{tucker2020preference,zhao2021optimization}. In preference-based reinforcement learning and reinforcement learning from human feedback, signals are commonly collected as preferences over trajectory segments or rankings of model outputs rather than through direct access to a reward function \cite{furnkranz2012preference,christiano2017deep,ouyang2022training}. These examples suggest that, in many applications, the observable object is not a numerical objective value but a preference relation between feasible alternatives.

Comparing the objective values of two (or more) points, as component of an algorithm, has long been used in derivative-free optimization. However, most algorithms
regard this procedure   as a way of using function values, or zeroth-order oracles, rather than as a separate (weaker) oracle. For example, classical direct-search methods compare trial points (in terms of their objective values) with the best point found so far \cite{hooke1961direct}, and the Nelder--Mead method is likewise order-based \cite{nelder1965simplex,lagarias1998convergence}. These methods tend to have poor or no convergence guarantees. 
More recently, several algorithms  that use function comparisons and yet have some complexity guarantees (under certain structural assumptions on the objective function) have been proposed for zeroth-order optimization \cite{jamieson2012query,bergou2020stochastic,golovin2020gradientless} and gradient estimation \cite{cai2022one,tao2026gradient}. 
Paper \cite{jamieson2012query} is the first in this line of work which we are aware of that explicitly presents the operation of comparing two points as a so-called ``comparison oracle,''
thus explicitly avoiding the need to compute function values.

 ``Comparison oracle,'' namely an oracle that returns the relative order of the function values at two queried points have appeared in a variety of recent papers. 
 Comparison oracle based optimization is also studied under the name of dueling optimization \cite{saha2021dueling,saha2025dueling,ren2026riemannian}. Related notions sub-zeroth order oracles \cite{karabag2021smooth} and order oracles \cite{lobanov2024acceleration} are other names of comparison oracles. Beyond pairwise comparisons, ranking oracles can also be obtained from multiple pairwise comparisons \cite{tang2024zeroth}.

Both the classical and the recent methods treat comparisons primarily as a weak interface to a latent scalar objective function. Specifically,  comparison oracle is generated by value differences of an underlying function. More importantly the guarantees are stated in terms of properties of that function, such as function-value gaps, regret, or gradient norms and the complexity results, when they exists, rely on the properties of the function.  This creates a gap with the underlying applications. In preference-driven problems, there may not exist an application-given scalar objective function. Even when an objective function representation exists, it may be non-identifiable from comparisons, nonunique, or only a surrogate introduced for modeling convenience. The authors of \cite{golovin2020gradientless}  observe the fact that multiple functions can result in identical comparison oracles and state their results in terms of the "best" strongly convex function that "fits" the oracle.

In summary, prior work aims to optimize a scalar function through weaker oracles. In contrast, our central goal is  to formulate an optimization problem when the preference relation itself is the basic object defining optimality. The main question motivating this paper is: 
\begin{question}\label{question1}
How should we formulate and analyze optimization using only a preference relation, without choosing any application-given scalar objective function representative?
\end{question}
This question is not merely a matter of notation. 
Figure~\ref{figs:levelsets} shows three functions that are dramatically different.  One is convex and smooth except for the optimal solution,  another is nonconvex, and another is discontinuous. They have different graphs but their level sets have the same shape (spheres centered at the origin).  
They induce the same preference relation: $x$ is preferred to $y$ if and only if $\|x\|_2\le \|y\|_2$.
Hence, these three functions are indistinguishable from the perspective of comparisons, and running any comparison based algorithm on these three different functions will observe the same comparison feedbacks and yield exactly the same trajectories of iterates. 
Therefore, manually choosing one representative function introduces artificial information that is invisible to comparisons. To answer Question \ref{question1}, we need assumptions, algorithms, and guarantees expressed directly in terms of the preference relation.

\begin{figure}[h]
    \begin{center}
        \includegraphics[width=\linewidth]{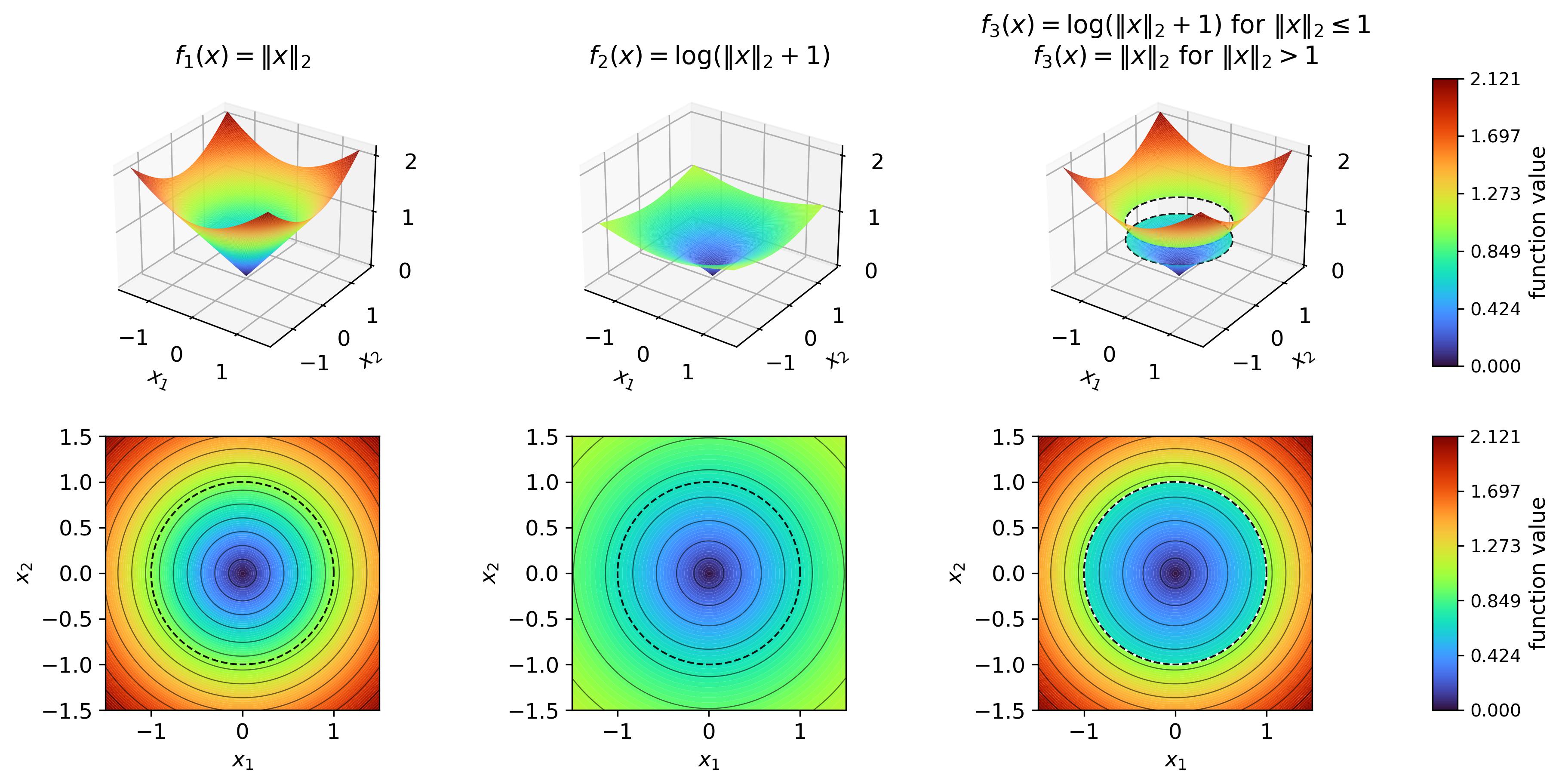}
    \end{center} 
    \caption{Graphs and level sets of three functions on $x=(x_1,x_2)\in\mathbb{R}^2$: $f_1(x):=\|x\|_2$, $f_2(x):=\log(\|x\|_2+1)$, and $f_3(x):=\log(\|x\|_2+1)$ if $\|x\|_2\le 1$, and $f_3(x):=\|x\|_2$ otherwise.}
    \label{figs:levelsets}
\end{figure}

Our answer to Question \ref{question1} is a \emph{function-free optimization} framework, which is also our first main contribution.  
We model the available information by a binary relation $\preccurlyeq$ on a $d$-dimensional Euclidean space $\X$, interpreted as a preference relation:
\begin{equation}\label{eq:preference-relation}
x\preccurlyeq y \quad \text{means that $x$ is preferred to $y$; equivalently, $x$ is no worse than $y$.}
\end{equation}  
Given a feasible set $\mathcal C \subseteq \X$, the optimization task is to identify a most-preferred feasible point under $\preccurlyeq$:
\begin{equation}
\label{eq:main-optimization-problem}
\text{find }x \in \mathcal{C}  \ , \quad \text{s.t.} \ \ 
    x\preccurlyeq y\ \ \text{for all} \ \  y\in\mathcal{C}.
\end{equation} 
This is the main optimization problem studied in this paper.  We call $x^\star\in\mathcal{C}$ an optimal solution if it is no worse than every feasible point.  Equivalently, the set of all optimal solutions is
\begin{equation}
\label{eq:optimal-set}
\X^\star\;:=\;\{x\in\mathcal{C}:\ x\preccurlyeq y \text{ for all } y\in\mathcal{C}\}.
\end{equation}   

To measure approximate optimality in this framework, we propose new measures defined purely through the preference relation.  The main near-optimality measure in this paper is the \emph{level-set optimality gap}: intuitively, the preference level set through $x$ consists of all points that are indifferent to $x$, and the level-set optimality gap measures the distance from this preference level set to the optimal set $\X^\star$.  This quantity is defined entirely from the preference relation and the Euclidean geometry.  We also introduce the \emph{regularity radius}, a local geometric scale describing how regularly the preference level set behaves near the current point.  A small regularity radius is a function-free analogue of near-stationarity, and under a local growth condition it can also imply near-optimality.  The formal definitions of preference level sets, the level-set optimality gap, the regularity radius, and the assumptions on the preference relation are given in Section~\ref{sec:co-foundations}.  Importantly, assumptions such as plateau-freeness, convexity, and regularity are imposed directly on the preference relation.

Once having the function-free optimization framework, the next question is algorithmic: how can one solve the problem without gradients or function values?  In smooth optimization, the gradient is critical because it indicates how the function changes and which direction improves the objective.  In function-free optimization, a particular gradient is not available and it is not intrinsic to the preference relation. The replacement used in this paper is the \emph{normal direction} to the current preference level set. Roughly speaking, the normal direction is the direction in which one leaves the current preference level fastest. Thus, moving in the opposite direction usually leads to better points. Therefore, in this paper, we are interested in estimating normal directions and using normal directions to design algorithms.

Throughout the paper, both the normal direction estimation problem and the optimization problem are guided by two principles: optimality and adaptivity.  Optimality matters because we seek worst-case complexity guarantees that cannot be substantially improved in general. It is a valuable benchmark for understanding the fundamental limits and remaining potential of an algorithm.
Adaptivity matters because, in applications with only ordinal feedback, the relevant local preference parameters are rarely known in advance, so the algorithm should not rely on parameters that need knowledge of them. Therefore, we seek comparison based methods that are near-optimal in the worst case while also adjusting automatically to unknown local geometry.

This leads to the remaining two questions studied in this paper:
\begin{question}\label{question2}
Can normal directions be estimated from comparisons as efficiently as possible, while remaining adaptive to local preference relation geometry?
\end{question}
\begin{question}\label{question3}
Once normal directions can be estimated, can they be used to design optimization methods that are likewise optimal and adaptive?
\end{question}
 
Our answers to Questions \ref{question2} and \ref{question3} form our second and third main contributions of this paper, below we give a high-level summary.

Our answer to Question \ref{question2} is a near-optimal adaptive normal-direction estimation method that uses only pairwise comparisons.  The key idea is an orthogonal-complement construction. Instead of estimating the normal direction directly, the method constructs $d-1$ approximately orthogonal directions to the normal direction, and then the one-dimensional orthogonal complement approximates the normal direction.
The method estimates the normal direction to accuracy $\epsilon$ using
$O\!\left(d\log\frac{d}{\epsilon}\right)$  pairwise comparisons, provided the comparison radius is chosen small enough.  We also prove a minimax lower bound of order $\Omega\!\left(d\log\frac{1}{\epsilon}\right)$ comparisons for normal-direction estimation.  Hence the fixed-radius estimator is nearly optimal.
We then give an adaptive version for convex preference relations.  This version uses line search to choose the comparison radius automatically and it achieves the same leading dependence on $d$ and $\epsilon$, with only logarithmic factor overhead.  

To answer Question \ref{question3}, we study mainly in the class of \emph{normal-direction span-based methods} for convex preference relations, which parallels the first-order methods in classical convex optimization.  In this class, we propose the normal direction descent method (NDD), and establish a two-case guarantee in terms of the level-set optimality gap and the regularity radius. To ensure smaller level-set optimality gap, the method needs to use more comparisons, while getting a smaller regularity radius only requires choosing a smaller comparison radius when estimating normal directions.
Under a local growth condition of the regularity radius, this yields an $\epsilon$ level-set optimality gap guarantee of consuming at most  $\widetilde O\!\left(\frac{dD^2}{\epsilon^2}\right)$ pairwise comparisons over \(O\left(\tfrac{D^2}{\epsilon^2}\right)\) normal-direction estimation steps.  Here $D$ denotes the distance from the initial point to the optimal solutions, and $\widetilde O (\cdot)$ hides only logarithmic factors of $d$ and $\frac{D}{\epsilon}$. This normal direction complexity is optimal as we prove a minimax lower bound of  $\Omega\!\left(\frac{D^2}{\epsilon^2}\right)$ for all normal-direction span-based methods. We also develop an adaptive method, namely adaNDD. It does not require prior knowledge of $D$ or the local growth parameters, and it matches the rates of NDD up to a logarithmic factor.

Before  concluding this section we want to emphasize an important distinction between two settings induced by comparison oracles. The setting we consider here is based on a noiseless oracle. In the literature,  comparison oracles are often allowed to be noisy, meaning that they may return the wrong order with some probability. Among them, when the oracle gives the correct answer with  probability  $\frac{1}{2}$ plus a constant positive term, which we call "advantage",  repeated independent queries can simulate a noiseless oracle with high probability \cite{saha2021dueling}. 
 A more general model assumes that the probability of giving the correct answer depends on the function-value difference between the two points \cite{jamieson2012query,saha2025dueling}.  The more general model however uses function value in the oracle definition, thus the setting is inherently tied to an objective function and is no longer function-free. Such oracles are thus beyond the study in this paper. 

\subsection{Literature review}\label{subsec:otherwork}

We now summarize the key aspects of the existing relevant literature mentioned above.

Comparison-based rules have a long history in derivative-free optimization. Early direct-search methods iterate by comparing trial solutions with the best point found so far \cite{hooke1961direct}, the Nelder--Mead method  orders simplex vertices by objective value and uses this ordering to choose reflection, expansion, contraction, and shrink steps \cite{nelder1965simplex}.  Modern direct-search and pattern-search methods formalize polling and acceptance rules based on sampled objective values \cite{torczon1997convergence,kolda2003optimization,mads}. However,  Nelder--Mead method has only limited general convergence theory and can converge to non stationary points \cite{lagarias1998convergence,mckinnon1998convergence}, while  direct-search methods that enjoy reasonable complexity guarantees rely on so-called 
{\em  sufficient-decrease conditions} which required knowledge of the difference of  function values and not just how they compare  \cite{vicente2013worst,kolda2003optimization}.  While purely comparison-based variants of direct search methods exist they have much weaker (exponential time) worst-case guarantees.

Some recent zeroth-order methods  use comparison-based update rules and manage to develop complexity guarantees under structural assumptions on the objective function. Bergou et al. \cite{bergou2020stochastic} propose the stochastic three-point method, which samples a direction $s$ and moves to the best point among $x$, $x+\alpha s$, and $x-\alpha s$. They give complexity guarantees for smooth objectives under various convexity assumptions. Golovin et al. \cite{golovin2020gradientless} propose the GradientLess Descent method for smooth strongly convex objective. Each iteration of the method searches around the current point at different radii and iteratively updates to the best point among the search points.  Golovin et al. \cite{golovin2020gradientless} realizes that since the method is purely comparison-based, it also applied to monotone transformations of the objective and thus their complexity results are stated in terms of the "best" strongly convex function that can be obtained under such transformation.

A more recent line of work treats pairwise comparisons as an explicit oracle for optimization. Jamieson et al. \cite{jamieson2012query} give lower bounds for derivative-free optimization with comparison oracles and propose a comparison-based randomized coordinate-descent method for smooth strongly convex objectives, where comparisons are used to perform line search along coordinate directions. Lobanov et al. \cite{lobanov2024acceleration} develop a comparison-based accelerated coordinate-descent method for smooth strongly convex objectives.   Noisy comparison oracles have also been studied. If the oracle returns the correct comparison with a fixed advantage over random guessing, then repeated independent queries can simulate a noiseless comparison oracle with high probability, while the number of queries scales quadratically with the inverse of the advantage over random guessing \cite{jamieson2012query,saha2021dueling}.  
Saha et al. \cite{saha2021dueling} study smooth convex and strongly convex optimization under such fixed-advantage noisy comparisons, and call this setting {\em dueling} convex optimization. Their method uses comparisons of randomly perturbed points  to estimate normalized descent directions and then applies normalized gradient descent.
This was later extended to noisy oracles whose advantage depends on the function-value difference \cite{saha2025dueling}. These work treat comparisons as a weak interface to the objective function, so the convergence guarantees are established in terms of either function-value gap or gradient norms, and the algorithm needs prior knowledge of parameters of the underlying objective (such as smoothness parameter $L$).

Another related line of work focuses on estimating normalized gradients for smooth functions. Cai et al. \cite{cai2022one} assume sparse gradients and use one-bit compressed sensing to estimate a normalized gradient from pairwise comparisons. A related estimator is later used by Chen et al. \cite{chen2025compo} for nonconvex smooth optimization in preference alignment. Karabag et al. \cite{karabag2021smooth} maintain a cone of plausible directions and shrink this cone using half-space cuts extracted from comparison feedback. This method achieves logarithmic dependence on the estimation error $\epsilon$, but its cost grows quadratically with the dimension. They also use a variant of the ellipsoid method to solve smooth convex optimization problems. Zhang and Li \cite{zhang2024comparisons} propose a two-phase method: first finding a dominant gradient component with the largest magnitude, and then estimating the ratio between this component and each of the other components. This improves the dimension dependence, but requires the comparison radius to be in a restrictive range. Very recently, while preparing the first version of this manuscript, we found a concurrent work \cite{tao2026gradient}, which further improves the method of \cite{zhang2024comparisons} by first rotating the basis toward a reference direction that is close to the normalized gradient. This loosens the requirement on the comparison radius, although the guarantee is no longer deterministic. Tao et al. \cite{tao2026gradient} also give a lower bound and show that their method is nearly optimal in terms of the comparison complexity. When the preference relation is realized by a smooth objective, the normalized gradient is identical to the normal direction. Our normal direction estimation method is deterministic and also nearly optimal, and furthermore for convex preference relations, it is adaptive to the local geometry. A more detailed comparison of these methods and our method will be given in Section \ref{sec:normal-estimation}.

Comparison feedback has also been extended to other settings. Saha et al. \cite{saha2024faster} study batched and multiway comparison oracle feedbacks and show that they can improve convergence rates. Tang et al. \cite{tang2024zeroth} use ranking oracles, which return an ordering over several queried points and can be simulated by multiple pairwise comparisons, to estimate descent directions and develop a stochastic gradient descent type method. Very recently, Ren et al. \cite{ren2026riemannian} study optimization on Riemannian manifolds using comparison oracles.  El Bakkali et al. \cite{bakkali2026nonsmooth} study nonsmooth nonconvex optimization with noisy comparison oracles
where the advantage of noisy oracles depends on the function-value difference via a link function whose expression is explicitly known.

\subsection{Organization and notation}

\paragraph{Notation.} 

In this paper, we let $(\X,\langle\cdot,\cdot\rangle)$ be a $d$-dimensional real Euclidean vector space, and let $\|x\|:=\sqrt{\langle x,x\rangle}$ be the induced norm. For a set $A\subseteq\X$, we write $\operatorname{int}(A)$, $\overline A$, and $\partial A$ for its interior, closure, and boundary, respectively. Let $\dist(x,A):=\inf_{y\in A}\|x-y\|$ denote the Euclidean distance from $x$ to $A$, and let $\dist(A_1,A_2) := \inf_{x\in A_1, y\in A_2} \|x-y\|$ denote the distance between two sets $A_1$ and $A_2$ in $\X$. Let $B(x,r)$ denote the closed Euclidean ball centered at $x$ with radius $r$. Let $\mathbb S^{d-1}$ denote the unit sphere in $\X$.  
For set $\mathcal{C}$, we use $\operatorname{diam}(\mathcal{C}):=\sup_{x,y\in\mathcal{C}}\|x-y\|$ to denote its diameter.
When $\mathcal C$ is closed and convex, $\Pi_{\mathcal C}(x)$ denotes the Euclidean projection of $x$ onto $\mathcal C$.  For nonzero vectors $u$ and $v$, we define $\angle(u,v):=\arccos\!\left(\frac{\langle u,v\rangle}{\|u\|\,\|v\|}\right)
\in[0,\pi]$.
The notation $a=O(b)$ means $a\le Cb$ for some absolute constant $C>0$, and $a=\Omega(b)$ means $a\ge cb$ for some absolute constant $c>0$.   Similarly, $a\lesssim b$ means $a\le Cb$ for an absolute constant $C>0$.  The notation $a=o(b)$ means $a/b\to0$ in the stated limiting regime.  We use $\widetilde O(\cdot)$ to suppresse absolute constants and logarithmic factors.

\paragraph{Outline of the paper.} 
The rest of the paper is organized as follows. 
Sections \ref{sec:co-foundations}, \ref{sec:normal-estimation}, and \ref{sec:ngd-fixed-radius} correspond to the three main contributions of the paper, answering Questions \ref{question1}, \ref{question2}, and \ref{question3}, respectively.
In Section~\ref{sec:co-foundations}, we propose the function-free optimization framework. And we also give definitions of the ingredients of the framework, including common assumptions, and optimality and stationarity criteria. We also explain the relation to comparison-based optimization when a scalar objective function exists.
In Section~\ref{sec:normal-estimation}, we study normal direction estimation using pairwise comparisons.  We present the fixed comparison radius normal direction estimator, the adaptive version for convex preference relations, and the lower bound showing near-optimality of these two methods. We also discuss how normal directions directly enable cutting-plane methods.
In Section~\ref{sec:ngd-fixed-radius}, we propose and analyze the normal direction descent method NDD, and its adaptive version adaNDD. We also prove lower bounds for the general class of normal-direction span-based methods and show the near-optimality of NDD and adaNDD. Finally, in Section~\ref{sec:conclusion}, we summarize the contributions and discuss future directions.

\section{Function-free optimization via preferences and comparisons}
\label{sec:co-foundations}

In this section, we introduce the function-free optimization framework. 

\subsection{Preference relation and comparison feedback}
\label{subsec:co-definition}

Recall that the preference relation $\preccurlyeq$ is defined in \eqref{eq:preference-relation}, on a $d$-dimensional Euclidean space $(\X,\langle\cdot,\cdot\rangle)$ (e.g., $\X=\R^d$), equipped with its induced norm $\|\cdot\|$. 
Throughout this paper, we assume the preference relation $\preccurlyeq$ is complete and transitive, defined as follows.
\begin{definition}[Complete and transitive preferences]
\label{def:total-preorder}
The preference relation $\preccurlyeq$ is a complete and transitive preference relation if
\begin{enumerate}[label=(\roman*),leftmargin=2.2em]
\item (\emph{Completeness}) for all $x,y\in\X$, either $x\preccurlyeq y$ or $y\preccurlyeq x$ (or both);
\item (\emph{Transitivity}) for all $x,y,z\in\X$, if $x\preccurlyeq y$ and $y\preccurlyeq z$, then $x\preccurlyeq z$.
\end{enumerate}
\end{definition}

Given $\preccurlyeq$, we define the strict preference relation $\prec$ and the indifference relation $\sim$ by
\[
x\prec y \quad\Longleftrightarrow\quad (x\preccurlyeq y)\ \text{and not}\ (y\preccurlyeq x),
\qquad
x\sim y \quad\Longleftrightarrow\quad (x\preccurlyeq y)\ \text{and}\ (y\preccurlyeq x).
\]
Intuitively, $x\prec y$ means that $x$ is strictly preferred to $y$, while $x\sim y$ means that $x$ and $y$ are tied (indifferent).

Given a feasible set $\mathcal C\subseteq\X$, the goal is to find a point $x^\star\in\mathcal C$ such that $x^\star\preccurlyeq y$ for all $y\in\mathcal C$. We have defined this problem in \eqref{eq:main-optimization-problem} and its solution set, denoted by $\X^\star$, has been defined in \eqref{eq:optimal-set}.
Throughout this paper we assume having access to efficient implementation of Euclidean projections onto $\mathcal{C}$. 
 
Since we do not have numerical objective values, some typical oracles used by classical optimization algorithms (e.g., function values or gradients) are unavailable.
Instead, we access the preference relation $\preccurlyeq$ through pairwise comparisons.
A comparison between two points \(x,y\in\X\) reveals which of the three  cases holds:
\(x\prec y\), \(x\sim y\), or \(y\prec x\). 
\begin{definition}[Comparison oracle]
\label{def:comparison-oracle} 
A comparison oracle is a mapping $\mathscr{C}_{\mathrm{cmp}}: \X \times \X \to \{-1, 0, +1\}$ such that for any $x, y \in \X$:
$\mathscr{C}_{\mathrm{cmp}}(x,y)=-1$ if $y\prec x$, $\mathscr{C}_{\mathrm{cmp}}(x,y)=0$ if $y\sim x$, and $\mathscr{C}_{\mathrm{cmp}}(x,y)=+1$ if $x\prec y$.
\end{definition}
\noindent
Throughout the paper, one use of the comparison oracle is simply called a comparison and the comparison complexity of an algorithm is the total number of comparisons it uses.

Before presenting algorithms for \eqref{eq:main-optimization-problem}, we introduce several basic geometric concepts defined directly from the preference relation $\preccurlyeq$.
These concepts will be used later to design algorithms and to analyze their performance.

\subsection{Preference sublevel sets and level sets}
\label{subsec:preference-levelset}
 
Using the preference relation $\preccurlyeq$, we define the \emph{(preference) sublevel set} and \emph{(preference) level set} at any point $x\in\X$ as follows.

\begin{definition}[Preference sublevel and level sets]
\label{def:level-sets}
For any $x\in\X$, the preference sublevel set at $x$ and the preference level set at $x$ are defined as
\begin{equation}
\label{eq:functionfree-levelsets}
\calS_x\;:=\;\{y\in\X:\ y\preccurlyeq x\},
\qquad
\calL_x\;:=\;\{y\in\X:\ y\sim x\}.
\end{equation}
\end{definition}

We call them ``preference sublevel sets'' and ``preference level sets'' to distinguish them from the usual function value based sublevel sets and level sets.
For simplicity, we will omit the word ``preference'' when there is no risk of confusion.
The sets $\calS_x$ and $\calL_x$ are intrinsic to the preference relation $\preccurlyeq$: they are defined purely from comparisons and do not require any numerical objective function.

\begin{lemma}
\label{lem:sublevel-nesting}
Assume the preference relation $\preccurlyeq$ is complete and transitive.
Then for any $x,y\in\X$, $y\preccurlyeq x$ if and only if $ \calS_y\subseteq \calS_x$. 
In particular, $y\sim x$ if and only if $\calS_y=\calS_x$ (equivalently, $\calL_y=\calL_x$).
Moreover, the set of optimal solutions satisfies $
\X^\star = \bigcap_{x\in\mathcal{C}} \bigl(\calS_x\cap \mathcal{C}\bigr)$.
\end{lemma}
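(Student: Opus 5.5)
The plan is to argue directly from the definitions of $\calS_x$, $\calL_x$, $\sim$ and $\X^\star$, using only completeness and transitivity from Definition~\ref{def:total-preorder}. There are three claims, and I will handle them in order.

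\textbf{The equivalence $y\preccurlyeq x \iff \calS_y\subseteq\calS_x$.}
For the forward direction, take any $z\in\calS_y$, so $z\preccurlyeq y$. Transitivity with $y\preccurlyeq x$ gives $z\preccurlyeq x$, hence $z\in\calS_x$. For the converse, I first note that reflexivity holds. Indeed, completeness applied to the pair $(y,y)$ yields $y\preccurlyeq y$, so $y\in\calS_y$. Then $\calS_y\subseteq\calS_x$ gives $y\in\calS_x$, which is exactly $y\preccurlyeq x$. This reflexivity observation is the one point that needs care, since it is not listed as an axiom; completeness supplies it.

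\textbf{The indifference statement.}
By definition, $y\sim x$ means $y\preccurlyeq x$ and $x\preccurlyeq y$. Applying the first part in both directions, this is equivalent to $\calS_y\subseteq\calS_x$ together with $\calS_x\subseteq\calS_y$, that is, $\calS_y=\calS_x$.

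For the parenthetical equivalence with $\calL_y=\calL_x$, I first check that $\sim$ is an equivalence relation. Reflexivity comes from completeness as above, symmetry is immediate from the definition, and transitivity follows from transitivity of $\preccurlyeq$ applied in both directions. Hence $\calL_x$ is the equivalence class of $x$. If $y\sim x$, the two classes coincide, so $\calL_y=\calL_x$. Conversely, if $\calL_y=\calL_x$, then $y\in\calL_y=\calL_x$ by reflexivity, so $y\sim x$.

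\textbf{The description of $\X^\star$.}
A point $z$ lies in $\bigcap_{x\in\mathcal C}(\calS_x\cap\mathcal C)$ if and only if $z\in\mathcal C$ and $z\preccurlyeq x$ for every $x\in\mathcal C$. This is precisely the defining condition of $\X^\star$ in \eqref{eq:optimal-set}.

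One caveat concerns the degenerate case $\mathcal C=\emptyset$. There the intersection over an empty index family must be read inside $\mathcal C$, which gives $\emptyset$ and agrees with $\X^\star=\emptyset$. Otherwise every term carries $\cap\,\mathcal C$, so the statement is a direct unfolding of definitions.

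I expect no real obstacle in any of these steps. The only subtlety is remembering to use completeness to obtain reflexivity.
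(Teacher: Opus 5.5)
Your proposal is correct and follows the paper's proof essentially step for step. You use transitivity for the forward inclusion, reflexivity (from completeness) for the converse, the first part in both directions for $\sim$, and a direct unfolding of definitions for $\X^\star$. Your check that $\sim$ is an equivalence relation, which gives the $\calL_y=\calL_x$ equivalence, and your remark on the empty-$\mathcal C$ convention fill in details the paper leaves implicit.
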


\begin{proof}
If $y\preccurlyeq x$ and $z\in\calS_y$, then $z\preccurlyeq y$; by transitivity, $z\preccurlyeq x$, so $z\in\calS_x$ and $\calS_y\subseteq \calS_x$.
Conversely, if $\calS_y\subseteq \calS_x$, then $y\in\calS_y$ (since $y\preccurlyeq y$), hence $y\in\calS_x$ and $y\preccurlyeq x$.
The equivalence for $y\sim x$ follows by applying the first statement in both directions.

Finally, $z\in\bigcap_{x\in\mathcal{C}}(\calS_x\cap\mathcal{C})$ if and only if $z\in\mathcal{C}$ and $z\preccurlyeq x$ for all $x\in\mathcal{C}$,
which is exactly the definition of $z\in\X^\star$ in \eqref{eq:optimal-set}.
\end{proof}

\paragraph{Preference properties via sublevel-set geometry.}
Below we introduce several geometric regularity properties of the preference relation $\preccurlyeq$ through the sublevel sets $\calS_x$ and level sets $\calL_x$.

\begin{definition}[Plateau-free and convex preferences]
\label{def:well-posed}
We say the preference relation $\preccurlyeq$ is:
\begin{itemize}
\item \emph{plateau-free} if for every $x\in\mathcal{X}\setminus \X^\star$, ties occur only on the boundary of the sublevel set at $x$, i.e., $\partial \calS_x=\calL_x$;   
\item \emph{convex} if for every $x\in\mathcal{X}$, the sublevel set $\calS_x$ is convex.
\end{itemize}
\end{definition}

These properties are stated directly in terms of sublevel sets and level sets, which are essentially equivalent to the preference relation $\preccurlyeq$ (Lemma \ref{lem:sublevel-nesting}) and do not require any numerical objective function.
They also match common modeling assumptions in preference data:
plateau-free preferences rule out regions of exact indifference with nontrivial volume (ties occur on a boundary rather than on a flat plateau), and convexity captures the idea that convex combinations of points that are no worse than $x$ are still no worse than $x$. 
Furthermore, with plateau-freeness, the level set $\calL_x$ is exactly the boundary of the sublevel set $\calS_x$. As $\calL_x$ is contained within $\calS_x$ according to its definition, the sublevel set $\calS_x$ is closed.
In the rest of the paper, we work primarily in the plateau-free setting. 
Convexity will only be invoked when deriving global guarantees for optimization algorithms.

\paragraph{Regularity and normal direction.}
Furthermore, we define the following regularity condition on the preference relation $\preccurlyeq$:
\begin{definition}[Regularity of preference relation]
\label{def:regularity} 
For $x \in \partial \mathcal{S}_x$ and $r \ge 0$, the preference relation $\preccurlyeq$ is $r$-\emph{regular} at $x$, if there exists  $n \in \mathbb{S}^{d-1}$, such that $B(x-r n, r)\subseteq \calS_x$ and $B(x+r n, r)\subseteq \overline{\X\setminus \calS_x}$.
\end{definition}
Here $B(z,r)$ in \ref{def:regularity} denotes the closed Euclidean ball centered at $z$ with radius $r$, i.e., $B(z,r):=\{y\in\X:\|y-z\|\le r\}$.
The condition in \ref{def:regularity}  says that, at scale $r$,  $\calS_x$ contains an interior ball of radius $r$ tangent at $x$ and, simultaneously, its complement contains an exterior ball of radius $r$ tangent at $x$ along direction $n$. The unit direction of $n$, denoted by $n_x := \frac{n}{\|n\|}$ (which is equal to $n$), is the unique outward unit normal direction to $\partial\calS_x$ at $x$. We call $n_x$ the \emph{normal vector} at $x$ for short.

To quantify how ``curved'' the boundary $\partial\calS_x$ is near $x$, we introduce the \emph{regularity radius} $r_x$ of $x$ and the curvature $\kappa_x$ of $x$. 
\begin{definition}[Regularity radius and curvature]  
\label{def:curvature}
Assume the preference relation $\preccurlyeq$ is $r$-regular at $x$ for $r\ge 0$, and let $n_x$ be the outward unit normal direction to $\partial\calS_x$ at $x$. The regularity radius $r_x$ of $x$ is defined as follows:
\begin{equation}
\label{eq:rollingball}
r_x \;:=\; \sup\Bigl\{ r\ge 0:\  B(x-r n_x, r)\subseteq \calS_x \ \text{and}\  B(x+r n_x, r)\subseteq \overline{\X\setminus \calS_x}\Bigr\}\in(0,\infty] \ ,
\end{equation}
and the curvature $\kappa_x$ of $x$ is defined as $\kappa_x := \frac{1}{r_x}$, with the convention $1/\infty=0$ and $1/0 = \infty$.
\end{definition}  
Thus $r_x$ is the largest radius at which $\partial\calS_x$ can be locally supported by two tangent balls. 
Figure \ref{figs:levelset} shows an illustration of the normal direction $n_x$ and the regularity radius $r_x$. 
Large $r_x$ corresponds to a locally ``flat'' boundary, while small $r_x$ corresponds to a highly curved boundary.

\begin{figure}[htbp]
    \centering
    \begin{subfigure}{0.34\textwidth}
        \centering
        \includegraphics[width=1\textwidth]{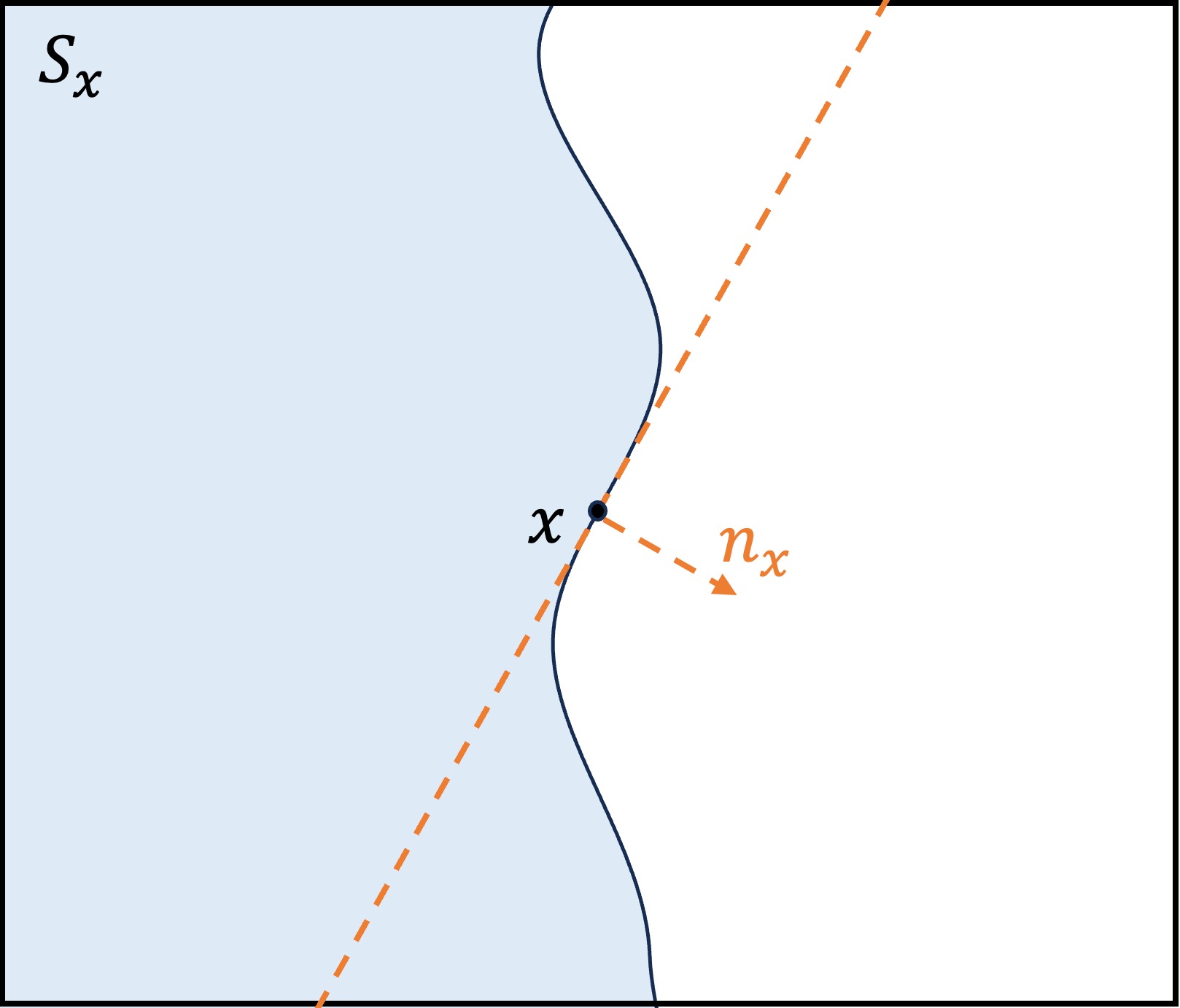}
        \caption{Outward unit normal direction $n_x$ to $\partial\calS_x$ at $x$}
        \label{fig:normaldirection}
    \end{subfigure}
    \qquad
    \begin{subfigure}{0.34\textwidth}
        \centering
        \includegraphics[width=1\textwidth]{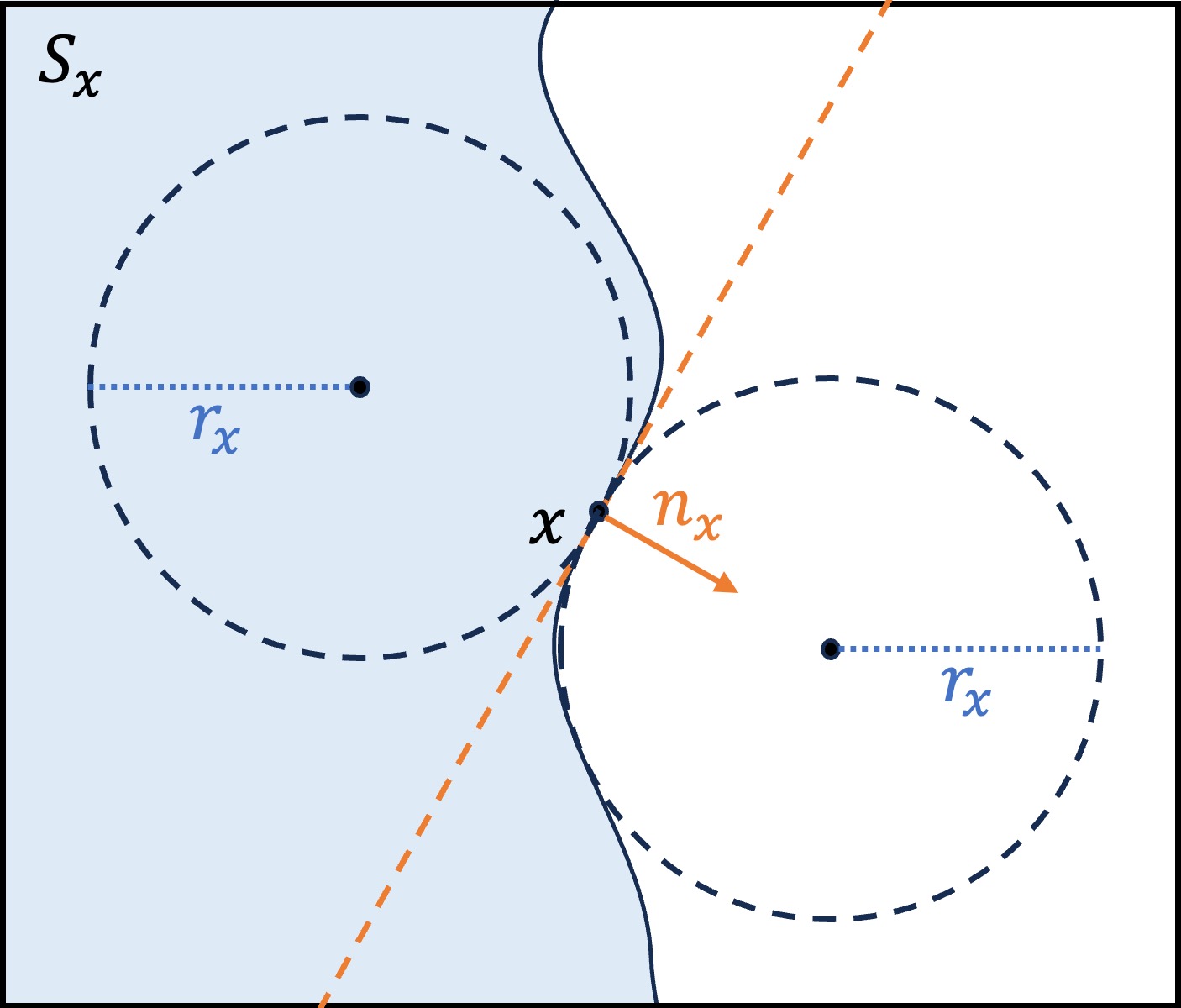}
        \caption{Two-sided tangent balls, and regularity radius $r_x$ at $x$}
        \label{fig:rollingball}
    \end{subfigure}
    \caption{Illustration of the outward unit normal direction $n_x$ and the regularity radius $r_x$.}
    \label{figs:levelset}
\end{figure}

It should be noted that, the regularity radius and curvature are local properties for $x$. Intuitively, the smaller the sublevel set $\mathcal{S}_x$ is, the smaller the largest incribing ball may be, and thus the smaller the regularity radius $r_x$ and the larger the curvature $\kappa_x$ for a boundary point $x$ can achieve. 
Furthermore, when the preference is convex, the sublevel set $\mathcal{S}_x$ is convex, and thus no matter how large $r$ is, $B(x+r n_x, r)\subseteq \overline{\X\setminus \calS_x}$, so the two-sided tangent-ball condition is equivalent to the ``one-sided'' tangent-ball condition  $r_x:= \sup_r\{r\ge 0:B(x-r n_x, r)\subseteq \calS_x\}$.   
In geometric terminology, radii defined through tangent balls are closely related to the notion of reach for sets with positive reach \cite{federer1969geometric}. In that language, $r_x$ plays the role of a local reach parameter.

\subsection{Relation with comparison-based optimization}
\label{subsec:order-equivalence}

The framework above takes the preference relation $\preccurlyeq$ (and the comparison oracle $\mathscr{C}_{\mathrm{cmp}}$) as the primitive description of the problem.
Much of the comparison-based optimization literature, however, starts from an objective function $f$ and then assumes that the algorithm can only access $f$ through comparisons \cite{zhang2024comparisons,tao2026gradient,karabag2021smooth,bergou2020stochastic}.
This subsection clarifies the relationship: whenever an objective $f$ realizes the preference relation, comparison-based optimization on $f$ is exactly the same as function-free optimization under~$\preccurlyeq$.
Moreover, the function-free viewpoint shows that some objective functions are actually invariant: strictly increasing reparameterizations of $f$ generate the same comparisons and hence the same optimization problem.

Let us say a function $f:\X\to\R$ realizes the preference relation $\preccurlyeq$ if for all $x,y\in\X$, $x\preccurlyeq y$ if and only if $f(x)\le f(y).$
If such a representative function $f$ exists, then the function-free optimization problem \eqref{eq:main-optimization-problem} is identical to the usual constrained minimization problem
\[
\min_{x\in\mathcal{C}} f(x),
\qquad\text{and hence}\qquad
\X^\star=\arg\min_{x\in\mathcal{C}} f(x).
\]  
Whenever $f$ realizes $\preccurlyeq$, comparisons can be encoded numerically by
\begin{equation}
\label{eq:Cf-def}
\mathscr{C}_f(x,y)\;:=\;\sign\!\bigl(f(y)-f(x)\bigr)\in\{-1,0,+1\}.
\end{equation}
In this case, $\mathscr{C}_f$ is consistent with Definition~\ref{def:comparison-oracle}: for all $x,y\in\X$, $\mathscr{C}_f(x,y)=-1$ iff $y\prec x$, $\mathscr{C}_f(x,y)=0$ iff $y\sim x$, and $\mathscr{C}_f(x,y)=+1$ iff $x\prec y$.
Thus, one may view the oracle $\mathscr{C}_{\mathrm{cmp}}$ as being instantiated by $\mathscr{C}_f$ when a representative function exists.

However, the same preference relation can have many different representative functions. We say two functions $f,g:\X\to\R$ are preference--equivalent on $\X$ if they realize the same preference relation: $f(x)\le f(y)$ if and only if $g(x)\le g(y)$ for all $x,y\in\X$. Equivalently, $\mathscr{C}_f(x,y)=\mathscr{C}_g(x,y)$ for all $x,y\in\X$. 
The next lemma shows that preference--equivalence is exactly composition with a strictly increasing scalar map.

\begin{lemma}\label{lem:phi-global}
If $f$ and $g$ are preference--equivalent on $\X$, then there exists a unique strictly increasing map
\(
\phi:\mathrm{range}(f)\to\mathrm{range}(g)
\)
such that $g=\phi\circ f$.
Conversely, for any strictly increasing $\phi:\R\to\R$, the functions $f$ and $\phi\circ f$ are preference--equivalent on~$\X$.
\end{lemma}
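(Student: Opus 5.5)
The plan is to define $\phi$ pointwise on $\mathrm{range}(f)$ and check that it is well defined, strictly increasing and unique. For $t\in\mathrm{range}(f)$, pick any $x\in\X$ with $f(x)=t$ and set $\phi(t):=g(x)$. Then $g=\phi\circ f$ holds by construction, and $\phi$ maps $\mathrm{range}(f)$ into $\mathrm{range}(g)$; in fact it is onto, since every $g(x)$ equals $\phi(f(x))$.

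\emph{Well-definedness.} Suppose $f(x)=f(x')$. Then $f(x)\le f(x')$ and $f(x')\le f(x)$. Preference--equivalence turns these into $g(x)\le g(x')$ and $g(x')\le g(x)$, so $g(x)=g(x')$. Thus $\phi(t)$ does not depend on which preimage $x$ we choose.

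\emph{Strict monotonicity.} Let $t=f(x)<s=f(y)$. From $f(x)\le f(y)$ we get $g(x)\le g(y)$. If equality held, then $g(y)\le g(x)$, and preference--equivalence would give $f(y)\le f(x)$, contradicting $t<s$. Hence $\phi(t)=g(x)<g(y)=\phi(s)$.

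\emph{Uniqueness.} Any $\psi$ with $g=\psi\circ f$ satisfies $\psi(f(x))=g(x)=\phi(f(x))$ for every $x$. So $\psi$ and $\phi$ agree on all of $\mathrm{range}(f)$, which is their domain.

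\emph{Converse.} Let $\phi:\R\to\R$ be strictly increasing. Such a map satisfies $a\le b \iff \phi(a)\le\phi(b)$. The forward direction is monotonicity; for the backward direction, if $a>b$ then $\phi(a)>\phi(b)$ by strictness. Applying this with $a=f(x)$ and $b=f(y)$ shows that $f$ and $\phi\circ f$ are preference--equivalent.

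The only step needing care is strict monotonicity, where the reverse implication in the definition of preference--equivalence is used to rule out ties in $g$. Apart from that, every step is a direct unwinding of the definitions.
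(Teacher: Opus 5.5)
Your proposal is correct and follows the paper's proof essentially step for step: the same pointwise definition $\phi(t):=g(x)$ for $f(x)=t$, well-definedness from the two inequalities, and strict monotonicity from the reverse implication of preference--equivalence. Uniqueness and the converse are argued the same way too. You are slightly more explicit than the paper in spelling out both directions of $a\le b \iff \phi(a)\le\phi(b)$ for the converse and in noting that $\phi$ is onto $\mathrm{range}(g)$.
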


\begin{proof}
Define $\phi(t):=g(x)$ for any $x\in\X$ with $f(x)=t$.
This is well-defined because if $f(x)=f(y)$, then both $f(x)\le f(y)$ and $f(y)\le f(x)$ hold; by preference--equivalence,
$g(x)\le g(y)$ and $g(y)\le g(x)$, hence $g(x)=g(y)$.

If $t_1<t_2$ and $f(x_i)=t_i$, then $f(x_1)\le f(x_2)$ but not $f(x_2)\le f(x_1)$.
By preference--equivalence, $g(x_1)\le g(x_2)$ but not $g(x_2)\le g(x_1)$, hence $g(x_1)<g(x_2)$.
Therefore $\phi(t_1)<\phi(t_2)$ and $\phi$ is strictly increasing.
Uniqueness is immediate from $g=\phi\circ f$.
The converse holds because strictly increasing maps preserve order.
\end{proof}

The three functions in Figure~\ref{figs:levelsets} are preference--equivalent. Although looking very different, they induce the same preference relation and hence the same optimization problem from the perspective of comparisons.

\begin{proposition}[Invariance under preference--equivalent representative functions]
\label{prop:invariance-representatives}
If two objective functions $f$ and $g$ induce the same preference relation,
then any comparison-based algorithm will behave in exactly the same way on $f$ and on $g$:
it will receive the same oracle replies, generate the same sequence of queried points and iterates (up to its own randomness).
\end{proposition}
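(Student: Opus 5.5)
The plan is to formalize what a ``comparison-based algorithm'' is and then argue by induction on the number of oracle calls. I will model such an algorithm as a sequence of deterministic maps $A_k$. At step $k$, $A_k$ takes as input the initial data (e.g.\ $x_0$, parameters), a realization $\omega$ of the algorithm's internal randomness, and the history of previous replies $(c_1,\dots,c_{k-1})\in\{-1,0,+1\}^{k-1}$. It outputs the next query pair $(y_k,z_k)$ together with any current iterate and possibly a stopping decision. Projections onto $\mathcal C$ and other operations that do not involve the objective are allowed inside $A_k$, since they are independent of $f$. The only channel through which $f$ enters is the reply $c_k=\mathscr{C}_f(y_k,z_k)$, as defined in \eqref{eq:Cf-def}.

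The key observation is supplied by the hypothesis together with the discussion preceding Lemma~\ref{lem:phi-global}. Since $f$ and $g$ realize the same preference relation, for all $x,y$ we have $f(x)\le f(y)$ if and only if $g(x)\le g(y)$. Hence $f(x)<f(y)$ if and only if $g(x)<g(y)$, and $f(x)=f(y)$ if and only if $g(x)=g(y)$. It follows that $\sign(f(y)-f(x))=\sign(g(y)-g(x))$, i.e.\ $\mathscr{C}_f\equiv\mathscr{C}_g$ pointwise. Equivalently, by Lemma~\ref{lem:phi-global}, $g=\phi\circ f$ with $\phi$ strictly increasing, and strictly increasing maps preserve the sign of differences.

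Now fix the same initialization and the same randomness $\omega$ for both runs, and induct on $k$. The base case is immediate: the first query $(y_1,z_1)=A_1(\text{init},\omega)$ does not depend on the objective at all, so it is identical in both runs. For the inductive step, suppose the histories $(c_1,\dots,c_{k-1})$ agree. Then $A_k$ produces the same query pair and the same iterate in both runs. The replies $c_k=\mathscr{C}_f(y_k,z_k)=\mathscr{C}_g(y_k,z_k)$ then also agree by the pointwise identity above. This extends the agreement to step $k$, including any stopping rule, which is itself a function of the history. Consequently, the entire transcript of queries, replies and iterates coincides; for randomized algorithms this holds for each realization of $\omega$, and hence also in distribution.

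There is no real technical obstacle here. The only delicate point is the modeling step: one must state precisely that the algorithm's only dependence on the objective is through the comparison replies. This rules out, for example, algorithms that secretly evaluate $f$. Once that definition is fixed, the induction is routine.
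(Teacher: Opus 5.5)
Your proof is correct and follows essentially the same reasoning as the paper. The paper gives no separate proof and treats the claim as immediate from the pointwise identity $\mathscr{C}_f=\mathscr{C}_g$ noted just before Lemma~\ref{lem:phi-global}; your fixed-seed induction on the reply history, with the algorithm modeled as in Definition~\ref{def:adaptive-comparison-algorithm}, is exactly the argument the paper itself uses in the proof of Theorem~\ref{thm:minimax}.
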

Therefore, for these algorithms that depends only on comparison information, no matter deterministic or randomized, any guarantee can only depend on the induced preference relation (and the sets $\calS_x,\calL_x$), not on the numerical scaling of a particular representative.  For example, the smoothness and convexity of the objective functions rely on the particular numerical scaling, but the smoothness and convexity of the preference relation are invariant under strictly increasing reparameterizations.

It should be noted that the comparison oracles considered in this paper (Definition~\ref{def:comparison-oracle} and \eqref{eq:Cf-def}) always return the correct comparison result. Exact comparison is common in comparison-based derivative-free optimization \cite{hooke1961direct,nelder1965simplex,golovin2020gradientless,bergou2020stochastic}. In addition, sometime people assume the comparison oracle may make errors with some probability \cite{jamieson2012query,saha2021dueling,saha2025dueling}.
In those cases, repeating comparing the same pair and taking a majority vote can generate the correct comparison result with high probability \cite{jamieson2012query}. If the probability of returning the right answer is at least $\tfrac{1}{2}+\nu$ for some $\nu>0$, then \cite{saha2021dueling} shows that there exists an adaptive sampling strategy so that with fixed given $\delta \in (0,1)$, it does not need prior knowledge of $\nu$ and can recover the correct comparison result with failure probability at most $\delta$ using at most $O(\nu^{-2}\log(1/\delta))$ repeated comparisons.

\medskip
\noindent
\textbf{Relation between regularity radius and smoothness.}
Although there might exist multiple functions realizing a preference relation, if one of them is smooth, then a relation between the regularity radius and the gradient norm can be established, as shown in the next lemma. 
\begin{lemma} 
\label{lem:regularityradius-hess-grad}
Suppose the preference relation $\preccurlyeq$ is realized by a differentiable function $f:\X\to\R$, and assume $f$ is $L$-smooth, i.e.,
\begin{equation}
\label{eq:L-smooth-grad-lip}
\|\nabla f(x)-\nabla f(y)\|\ \le\ L\|x-y\|
\qquad\text{for all }x,y\in\X.
\end{equation}
Fix any $x\in\X $ with $\nabla f(x)\neq 0$, then the normal direction $n_x$ at $x$ is given by $\hat{n}\;:=\;\frac{\nabla f(x)}{\|\nabla f(x)\|}$. The regularity radius $r_x$ and curvature $\kappa_x$ satisfy
\begin{equation}
\label{eq:rx-lowerbound-smooth}
r_x\ \ge\ \frac{\|\nabla f(x)\|}{L},
\qquad\text{and hence}\qquad
\kappa_x\ \le\ \frac{L}{\|\nabla f(x)\|} \ .
\end{equation}
\end{lemma}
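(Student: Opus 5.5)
Set $g:=\nabla f(x)$, $\hat n := g/\|g\|$ and $r := \|g\|/L$. By Definition~\ref{def:curvature} it suffices to prove the two ball inclusions
\[
B(x-r\hat n, r)\subseteq \calS_x,\qquad B(x+r\hat n,r)\subseteq \overline{\X\setminus\calS_x}.
\]
Together they show the preference is $r$-regular at $x$ with normal direction $\hat n$. Hence $r_x\ge r$, and $\kappa_x\le L/\|g\|$ follows immediately. Note that $\calS_x=\{y: f(y)\le f(x)\}$, since $f$ realizes $\preccurlyeq$. The membership $x\in\partial\calS_x$ follows from $g\neq0$: points $x-t\hat n$ lie in $\calS_x$ and points $x+t\hat n$ lie outside it for small $t>0$.

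The main tool is the standard descent lemma from $L$-smoothness:
\[
f(y)\le f(x)+\langle g,y-x\rangle+\tfrac{L}{2}\|y-x\|^2 .
\]
Write $y=x-r\hat n+v$ with $\|v\|\le r$. Then the right-hand side minus $f(x)$ becomes
\[
-\|g\|r+\|g\|\langle\hat n,v\rangle+\tfrac L2\|v-r\hat n\|^2 .
\]
Expanding the square and using $L r=\|g\|$, the linear terms $\pm\|g\|\langle \hat n,v\rangle$ cancel. What remains is $-\|g\|r+\tfrac L2(\|v\|^2+r^2) = \tfrac L2(\|v\|^2-r^2)\le 0$. So $f(y)\le f(x)$, and the interior ball is contained in $\calS_x$.

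For the exterior ball, use the lower bound $f(y)\ge f(x)+\langle g,y-x\rangle-\tfrac L2\|y-x\|^2$. This also follows from $L$-smoothness alone, with no convexity needed. The same computation with $y=x+r\hat n+v$ gives $f(y)-f(x)\ge \tfrac L2(r^2-\|v\|^2)\ge0$. This is strictly positive on the open ball $\|v\|<r$, so the open ball lies in $\X\setminus\calS_x$. Taking closures, the closed ball lies in $\overline{\X\setminus\calS_x}$.

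It remains to identify $n_x$ with $\hat n$. The two tangent balls at $x$ along $\hat n$ force uniqueness: any other unit $n$ with an interior tangent ball of positive radius would produce points $x-tn'$ in $\calS_x$ that lie strictly inside the open exterior ball along $\hat n$, a contradiction. So $n_x=\hat n$, which is the direction used in \eqref{eq:rollingball}. The main subtlety I expect is the boundary of the exterior ball, where $f(y)=f(x)$ is possible. Passing to the open ball and taking the closure, as above, handles this.
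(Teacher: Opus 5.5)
Your proof is correct and is essentially the paper's argument. Both apply the two-sided quadratic bounds implied by $L$-smoothness on the interior and exterior tangent balls of radius $\|\nabla f(x)\|/L$ along $\nabla f(x)/\|\nabla f(x)\|$. Your open-ball-then-closure step for the exterior ball is slightly more careful than the paper, which goes directly from $f(y)\ge f(x)$ to $y\in\overline{\X\setminus\calS_x}$. The one thing to add is the trivial case $L=0$ (affine $f$, halfspace sublevel sets, $r_x=\infty$), which the paper treats separately because $r=\|\nabla f(x)\|/L$ is undefined there.
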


\begin{proof}
If $L=0$, then $\nabla f$ is constant and the induced preference sublevel sets are halfspaces. In this case
$r_x=\infty$ and $\kappa_x=0$, so \eqref{eq:rx-lowerbound-smooth} holds trivially.
Assume below that $L>0$ and set $r:=\frac{\|\nabla f(x)\|}{L}.$ We will then show there exists two-sided tangent balls, $B(x-r \hat{n},r)$ and $B(x+r \hat{n},r)$, of radius $r$ at $x$ along direction $\hat{n}$, which implies $\hat{n}$ is the normal direction, $r_x\ge r$ and hence \eqref{eq:rx-lowerbound-smooth}.

Take any $y\in B(x-r \hat{n},r)$. Expanding $\|y-(x-r \hat{n})\|^2\le r^2$ gives $\|y-x\|^2+2r\langle \hat{n},y-x\rangle\le 0$, \text{so} $\langle \hat{n},y-x\rangle\le -\frac{\|y-x\|^2}{2r}$. Multiplying by $\|\nabla f(x)\|$ yields
\[
\langle \nabla f(x),y-x\rangle
=\|\nabla f(x)\|\langle \hat{n},y-x\rangle
\le -\frac{\|\nabla f(x)\|}{2r}\|y-x\|^2
=-\frac{L}{2}\|y-x\|^2.
\]
Note that smoothness \eqref{eq:L-smooth-grad-lip} implies that $f(y) \le f(x)+\langle \nabla f(x),y-x\rangle+\frac{L}{2}\|y-x\|^2,$ so  $f(y)\le f(x)$.
Therefore,
$B(x-r \hat{n},r)\subseteq \calS_x$.

Similarly, for any $y\in B(x+r \hat{n},r)$, expanding $\|y-(x+r \hat{n})\|^2\le r^2$ gives $\|y-x\|^2-2r\langle \hat{n},y-x\rangle\le 0$, so $\langle \hat{n},y-x\rangle\ge \frac{\|y-x\|^2}{2r}$.
Therefore
\[
\langle \nabla f(x),y-x\rangle
=\|\nabla f(x)\|\langle \hat{n},y-x\rangle
\ge \frac{\|\nabla f(x)\|}{2r}\|y-x\|^2
=\frac{L}{2}\|y-x\|^2.
\]
Similarly, the smoothness condition \eqref{eq:L-smooth-grad-lip} implies that $f(y) \ge f(x)+\langle \nabla f(x),y-x\rangle-\frac{L}{2}\|y-x\|^2,$ so $f(y)\ge f(x)$. Therefore, $B(x+r \hat{n},r)\subseteq \overline{\X\setminus \calS_x}$.

By Definitions~\ref{def:regularity} and \ref{def:curvature}, $\hat{n}$ is the normal direction $n_x$ at $x$, and $r_x\ge r$ and $\kappa_x\le 1/r$.
\end{proof}

\subsection{Optimality and stationarity criteria}\label{subsec:optimality-stationarity}
\paragraph{Level-set optimality gap.}
In function-based optimization, the quality of a solution is often measured by function suboptimality (e.g., $f(x)-f^\star$).
In function-free optimization, we do not have access to numerical objective values, so we instead use comparison-invariant geometric measures.
Two examples are the distance to optimal solutions, $\dist(x,\X^\star)$, and the level-set optimality gap, $\DeltaLS(x)$, defined as
\[ 
\DeltaLS(x):=\dist\bigl(\X^\star,\calL_x\bigr).
\]
Since $x\in \calL_x$, we always have $\DeltaLS(x)\le \dist(x,\X^\star)$, so $\DeltaLS(x)$ is never larger than the point-to-optimal-set distance and can therefore be easier to satisfy.
Moreover, because points in the same level set $\calL_x$ are regarded as indifferent under the preference relation, if $\DeltaLS(x)$ is small, then there exists a point that is indifferent to $x$ and close to the optimal set.
In this sense, $x$ is as good as a point near the optimal set from the perspective of the preference relation.
Therefore, $\DeltaLS(x)$ is more intrinsic to the preference relation $\preccurlyeq$ than $\dist(x,\X^\star)$.
For this reason, most of our optimization results are stated in terms of $\DeltaLS(\cdot)$.
 
\paragraph{Regularity radius.}
Although not always reliable, the regularity radius $r_x$ can also serve as a measure of near-stationarity in some cases.
The regularity radius $r_x$ is also intrinsic to the preference relation.
At a high level, small $r_x$ indicates that the local geometry of the sublevel set near $x$ is sharp, so small perturbations may  lead to noticeable changes in the local ordering.
This is consistent with the intuition that, in a near-optimal or near-stationary region, a small perturbation of the underlying preference may flip the local ordering of two points.

If a smooth function realizes the preference relation, then small regularity radius implies small gradient norm, which is widely regarded as a measure of near-stationarity.
Indeed, Lemma~\ref{lem:regularityradius-hess-grad} shows that if $\preccurlyeq$ is realized by an $L$-smooth differentiable function $f$, then
$
r_x \ge \frac{\|\nabla f(x)\|}{L},
$
and hence $\|\nabla f(x)\|\le L r_x$.
Moreover, small regularity radius is in general a stronger certificate than small gradient norm, since the converse need not hold.
For $f(t)=t^3$ on $\R$, the point $t=0$ is stationary, but the induced preference sublevel set $\calS_0$ is $(-\infty,0]$, so $r_0=\infty$.
Furthermore, it will be shown in Section~\ref{subsec:regularity-vs-distance} that a local growth condition widely holds for regularity radius. For example, it holds when the preference relation is realized by several commonly used classes of functions, including convex quadratic functions, strongly convex functions, and smooth functions satisfying the PL inequality and quadratic growth. 
If such a local growth condition holds, then small regularity radius directly implies small level-set optimality gap $\DeltaLS(\cdot)$.

Throughout the paper we use $\DeltaLS(\cdot)$ as the primary measure of near-optimality, while small $r_x$ appears as a secondary certificate.  This viewpoint is also reflected in our algorithmic guarantees: the method will either drive the level-set optimality gap to be small, or return a point with small regularity radius. Achieving the former usually requires higher comparison complexity, while achieving the latter typically only requires setting certain parameters to be sufficiently small.

\section{Normal direction estimation from comparisons}
\label{sec:normal-estimation}

This section studies how to estimate the normal direction $n_x$ at a point $x$ using only pairwise comparisons. 
Throughout this section we assume the preference relation is plateau-free (Definition~\ref{def:well-posed}) and $r_x$-regular at $x$ (Definition~\ref{def:regularity}) with regularity radius $r_x>0$,
so the normal direction $n_x$ exists and is unique.
We first propose a normal direction estimation method based on comparing $x$ and $x+h u$ with user-chosen $h>0$ and $u\in\mathbb{S}^{d-1}$, for $x$ at which the preference relation is $r_x$-regular with regularity radius $r_x>0$.
In \ref{subsec:convex-lineseach}, assuming in addition that the sublevel set $\calS_x$ is convex, we propose a certified comparison scheme using line search to update $h$, yielding a parameter-free normal direction estimator.
Minimax lower bounds are given in Section~\ref{sec:normal-lower-bound}.

\subsection{Halfspace certificates of normal direction via comparisons}

Before presenting the algorithms, we first show how to extract halfspace certificates of normal direction $n_x$ from comparisons, which is the key building block for our normal direction estimation methods.
 
\begin{lemma}
\label{lem:curv-margin-OSNE}
Fix $x\in\mathcal{X}\setminus\X^\star$ and assume the preference relation is plateau-free and $r_x$-regular at $x$ with regularity radius $r_x > 0$.
Fix any $h>0$ and set $\tau:=\frac{h}{2r_x}$.
Then for every $u\in\mathbb{S}^{d-1}$,
\begin{enumerate}[label=(\roman*),leftmargin=2.2em]
\item if $x \preccurlyeq x+hu$, then $\langle n_x,u\rangle\ge -\tau$;
\item if $x+hu\preccurlyeq x$, then $\langle n_x,u\rangle\le \tau$.
\end{enumerate}
\end{lemma}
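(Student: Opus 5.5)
Both parts are proved by contraposition, using the two tangent balls from Definition~\ref{def:regularity} at radius $r<r_x$ and then letting $r\uparrow r_x$. The case $r_x=\infty$ needs no separate treatment: then $\tau=0$, and the argument works for every finite $r$.

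\textbf{Part (ii).} Suppose $\langle n_x,u\rangle>\tau=\frac{h}{2r_x}$. Pick $r<r_x$ close enough to $r_x$ that $\langle n_x,u\rangle>\frac{h}{2r}$ still holds; this is possible because the inequality is strict. The point $y=x+hu$ satisfies
\[
\|y-(x+rn_x)\|^2=h^2-2rh\langle n_x,u\rangle+r^2<r^2 .
\]
So $y$ lies in the interior of the exterior ball $B(x+rn_x,r)$, which is contained in $\overline{\X\setminus\calS_x}$. Hence $y\in\operatorname{int}\bigl(\overline{\X\setminus\calS_x}\bigr)$.

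The key step is to show that this forces $y\notin\calS_x$. The interior of the closure of the complement of $\calS_x$ equals the complement of the interior of the closure of $\calS_x$, and it lies in the complement of $\operatorname{int}\calS_x$. Since $\calS_x$ is closed under plateau-freeness (as noted after Definition~\ref{def:well-posed}), $y\notin\operatorname{int}\calS_x$. So it suffices to exclude $y\in\partial\calS_x=\calL_x$.

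Suppose $y\in\partial\calS_x$. Then $y$ is a limit of points of $\calS_x$. But $y$ has a whole neighborhood inside the exterior ball $B(x+rn_x,r)\subseteq\overline{\X\setminus\calS_x}$. This is not immediately contradictory, because $\calS_x$ could have empty interior near $y$, so I need a different route. Use the interior ball: shrinking a point $y$ strictly inside the exterior ball toward $x$ stays away from $B(x-rn_x,r)$, which does not help directly either. A cleaner route is to exploit that $\calS_x$ equals the closure of its interior. Plateau-freeness gives $\partial\calS_x=\calL_x$, and I expect that points of $\calL_x$ arise as limits of strictly better points; I would try to argue via $\calS_z\subsetneq\calS_x$ for $z\prec x$.

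This step, showing that the open exterior ball misses $\calS_x$ entirely, is where I expect the main obstacle. If it becomes delicate, the fallback is to read the definition of $r$-regularity as placing the open exterior ball in $\X\setminus\calS_x$. This is the intended meaning, since $x\in\calS_x$ sits on the boundary of that ball. With that reading, $y\notin\calS_x$, so $y\not\preccurlyeq x$. This contradicts the hypothesis $x+hu\preccurlyeq x$ and proves (ii).

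\textbf{Part (i).} This is symmetric. Suppose $\langle n_x,u\rangle<-\tau$. Then, for $r<r_x$ close to $r_x$,
\[
\|x+hu-(x-rn_x)\|^2=h^2+2rh\langle n_x,u\rangle+r^2<r^2 .
\]
So $y=x+hu$ lies in the interior of $B(x-rn_x,r)\subseteq\calS_x$, hence $y\in\operatorname{int}\calS_x$. By plateau-freeness, $\calL_x=\partial\calS_x$, so $y\notin\calL_x$, and $y\in\calS_x$ gives $y\preccurlyeq x$. Since $y$ is not indifferent to $x$, we get $y\prec x$. That is, $x\preccurlyeq y$ fails, contradicting the hypothesis $x\preccurlyeq x+hu$.

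This direction uses only plateau-freeness and the interior ball, so it is clean. The only subtle point in the whole lemma is the exterior-ball step in part (ii) discussed above.
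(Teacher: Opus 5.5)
Your part (i) is the paper's argument (interior tangent ball, then plateau-freeness to exclude a tie). Working with $r<r_x$ and letting $r\uparrow r_x$ is a harmless refinement that also covers $r_x=\infty$; the paper simply uses the balls of radius $r_x$ given by the hypothesis.

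The step you flagged in (ii) is a genuine gap, and it cannot be closed from the stated hypotheses. From $B(x+rn_x,r)\subseteq\overline{\X\setminus\calS_x}$ you only get that the open ball lies in $\operatorname{int}\overline{\X\setminus\calS_x}=\X\setminus\overline{\operatorname{int}\calS_x}$. Plateau-freeness does not force $\calS_x=\overline{\operatorname{int}\calS_x}$, so your proposed route via ``points of $\calL_x$ are limits of strictly better points'' fails. In fact (ii) is false as stated. Here is a counterexample in $\R^2$ with $\mathcal C=\X$. Let $\sigma:=[1,2]\times\{0\}$ and $K:=B(0,1)\cup\sigma$, and realize the preference by $f(y)=\|y\|$ on $B(0,1)$, $f\equiv1$ on $\sigma$, and $f(y)=1+\dist(y,K)$ elsewhere.
\begin{itemize}
\item Then $\X^\star=\{0\}$. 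The sublevel sets are $B(0,c)$ for $c<1$; $K$ for $c=1$, with $\partial K=\mathbb S^1\cup\sigma=\{f=1\}$; and $\{\dist(\cdot,K)\le c-1\}$ for $c>1$.
\item For $c>1$, the boundary is exactly the level set, because $\dist(\cdot,K)$ strictly increases in the radial direction off $K$. Hence the preference is plateau-free.
\item At $x=(1,0)$ we have $\calS_x=K$ and $B(0,1)\subseteq K$. Since $\sigma$ has empty interior, $\overline{\X\setminus K}=\{\|y\|\ge1\}\supseteq B((2,0),1)$. So the preference is $1$-regular at $x$ with $n_x=(1,0)$ and $r_x=1$.
\item Take $u=(1,0)$ and any $h\in(0,1]$. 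The point $x+hu$ lies in $\sigma$ and ties with $x$, yet $\langle n_x,u\rangle=1>h/2=\tau$.
\end{itemize}

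The paper's proof makes the same leap without comment. It writes $x+hu\in\operatorname{int}B(x+r_xn_x,r_x)\subseteq\operatorname{int}(\calS_x^c)$, and that inclusion is exactly the unjustified one. Your fallback, strengthening the exterior-ball condition to $\operatorname{int}B(x+rn,r)\cap\calS_x=\emptyset$, is precisely what the paper's argument silently needs; under it your proof coincides with the paper's. You should state it as an added hypothesis, not as a reading of Definition~\ref{def:regularity}, since the literal definition makes the lemma false. Other valid repairs:
\begin{itemize}
\item assume $\calS_x=\overline{\operatorname{int}\calS_x}$; or
\item assume $\calS_x$ is convex, since a closed convex set containing a ball of radius $r_x>0$ is the closure of its interior.
\end{itemize}
The convexity repair covers the paper's later uses where convexity is assumed. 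The convexity-free Theorem~\ref{thm:OSNE} inherits the same caveat.
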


\begin{proof} 
We prove both claims by contradiction.

\smallskip
\noindent\textit{(i)}
Assume $\langle n_x,u\rangle< -\tau=-\frac{h}{2r_x}$.
Then
\[
\|(x+hu)-(x-r_x n_x)\|^2
=
\|hu+r_x n_x\|^2
=
h^2+r_x^2+2hr_x\langle u,n_x\rangle
<
r_x^2,
\]
so $x+hu\in \operatorname{int}B(x-r_x n_x,r_x)\subseteq \operatorname{int}(\calS_x)$.
Since $x\notin\X^\star$, plateau-freeness implies that interior points of $\calS_x$ cannot tie with $x$; hence $x+hu\prec x$.
In particular, $x\preccurlyeq x+hu$ is false, proving (i).

\smallskip
\noindent\textit{(ii)}
Assume $\langle n_x,u\rangle>\tau=\frac{h}{2r_x}$.
Then
\[
\|(x+hu)-(x+r_x n_x)\|^2
=
\|hu-r_x n_x\|^2
=
h^2+r_x^2-2hr_x\langle u,n_x\rangle
<
r_x^2,
\]
so $x+hu\in \operatorname{int}B(x+r_x n_x,r_x)\subseteq \operatorname{int}(\calS_x^c)$.
Therefore $x+hu\not\preccurlyeq x$, proving (ii).
\end{proof}

Lemma \ref{lem:curv-margin-OSNE} says that, the comparison between $x$ and $x+hu$ provides a halfspace certificate of the normal direction $n_x$ with margin $\tau=\frac{h}{2r_x}$. The left two plots of Figure \ref{figs:nonconvex-margin} shows an illustration.
Intuitively, if $x+hu \in S_x$, then the angle between $u$ and $n_x$ cannot be much smaller than $90^\circ$, and if $x+hu\notin S_x$, then the angle between $u$ and $n_x$ cannot be much larger than $90^\circ$.
The flatter the sublevel set is (the smaller $h$ is and the larger $r_x$ is), the more accurate the certificate is. 

Furthermore, if assuming in addition that $\calS_x$ is convex, then the halfspace certificate can be strengthened with no margin.

\begin{lemma}
\label{lem:convex-supporting-halfspace}
Let the preference relation be $r_x$-regular at $x$ with regularity radius $r_x>0$, and assume in addition that $\calS_x$ is convex.
Then $\calS_x$ is contained in the supporting halfspace: $\calS_x
\subseteq
\{z\in\mathcal{X}:\langle n_x,z-x\rangle\le 0\}$.
Furthermore, for any $h>0$ and $u\in\mathbb{S}^{d-1}$, if $
x+hu\preccurlyeq x$, then $\langle n_x,u\rangle\le 0.$
\end{lemma}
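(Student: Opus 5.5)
The plan is to show the supporting-halfspace inclusion by contradiction. The only structure used will be the interior tangent ball $B(x-r_x n_x,r_x)\subseteq\calS_x$ from Definition~\ref{def:regularity} (or $B(x-rn_x,r)$ for any $0<r<r_x$ if the supremum in \eqref{eq:rollingball} is not attained), together with convexity of $\calS_x$. The second claim will then follow immediately.

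For the first claim, I will suppose some $z\in\calS_x$ has $\langle n_x,z-x\rangle=\delta>0$. For $t\in(0,1]$, convexity places the segment point $x_t:=x+t(z-x)$ in $\calS_x$. The aim is to show that $x_t$ lies in the interior of the exterior ball for small $t$.

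\begin{itemize}
\item First I will note that $\|x_t-(x+rn_x)\|^2=t^2\|z-x\|^2-2tr\delta+r^2$.
\item For $0<t<2r\delta/\|z-x\|^2$ this is $<r^2$, so $x_t\in\operatorname{int}B(x+rn_x,r)$.
\item The interior of $B(x+rn_x,r)\subseteq\overline{\X\setminus\calS_x}$ lies in $\operatorname{int}\overline{\X\setminus\calS_x}=\X\setminus\overline{\calS_x}$. Hence $x_t\notin\calS_x$, which is a contradiction.
\end{itemize}

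Note that the interior ball was not needed here; only the exterior ball and convexity were used. If one prefers to avoid the topological identity, use the version from the proof of Lemma~\ref{lem:curv-margin-OSNE}(ii). That proof shows that points in $\operatorname{int}B(x+r n_x,r)$ are not $\preccurlyeq x$. The argument should be phrased the same way here, without invoking plateau-freeness, which was not assumed.

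The main subtlety is that $\operatorname{int}(\overline{A^c})=(\overline{A})^c$, so $\operatorname{int}\overline{\X\setminus\calS_x}$ excludes $\overline{\calS_x}$ and hence $\calS_x$. I regard this as the only real obstacle, together with handling a possibly non-attained supremum by working with $r<r_x$.

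The second claim is a direct specialization. If $x+hu\preccurlyeq x$ then $x+hu\in\calS_x$. The inclusion gives $h\langle n_x,u\rangle\le0$, and dividing by $h>0$ yields $\langle n_x,u\rangle\le 0$.
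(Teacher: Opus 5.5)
Your step ``$\operatorname{int}(\overline{A^c})=(\overline{A})^c$'', which you single out as the main subtlety, is false in general, and the argument breaks there. Since $\overline{A^c}=(\operatorname{int}A)^c$, the correct identity is $\operatorname{int}(\overline{A^c})=(\overline{\operatorname{int}A})^c$. From $x_t\in\operatorname{int}B(x+rn_x,r)$ you therefore only get $x_t\notin\overline{\operatorname{int}\calS_x}$. This contradicts $x_t\in\calS_x$ only if $\calS_x\subseteq\overline{\operatorname{int}\calS_x}$.

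Your remark that the interior ball is not needed is also wrong. In $\X=\R^2$, take $\calS_x=\{x+sn:0\le s\le 1\}$ for a unit vector $n$. This set is convex and $x\in\partial\calS_x$. Since $\overline{\X\setminus\calS_x}=\X$, the exterior-ball condition $B(x+rn,r)\subseteq\overline{\X\setminus\calS_x}$ holds for every $r$. Yet $z=x+n\in\calS_x$ has $\langle n,z-x\rangle=1>0$.

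Falling back on the proof of Lemma~\ref{lem:curv-margin-OSNE}(ii) does not help. Its inclusion $\operatorname{int}B(x+r_xn_x,r_x)\subseteq\operatorname{int}(\calS_x^c)$ is the same unproved step in different notation.

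The repair needs exactly the ball you discarded. Since $B(x-rn_x,r)\subseteq\calS_x$, the point $p:=x-rn_x$ lies in $\operatorname{int}\calS_x$. By convexity, $p_\lambda:=(1-\lambda)p+\lambda x_t\in\operatorname{int}\calS_x$ for every $\lambda\in[0,1)$. Because $x_t$ lies in the open ball $\operatorname{int}B(x+rn_x,r)$, so does $p_\lambda$ for $\lambda$ close to $1$. But $B(x+rn_x,r)\subseteq\overline{\X\setminus\calS_x}=\X\setminus\operatorname{int}\calS_x$, which is a contradiction. With this added, your segment argument is correct, and your second claim is exactly the paper's.

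For comparison, the paper's proof just asserts that the tangent halfspace of the convex set $\calS_x$ at $x$ is supporting. The standard justification uses only the interior ball. Since $x\in\partial\calS_x$ and $\calS_x$ is convex, there is a unit $v$ with $\langle v,z-x\rangle\le 0$ for all $z\in\calS_x$. Testing this on $z=x-rn_x+rv\in B(x-rn_x,r)$ gives $\langle v,n_x\rangle\ge 1$, so $v=n_x$. The exterior ball plays no role in that route.
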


\begin{proof}
Since $\calS_x$ is convex, its tangent halfspace at $x$ is a supporting halfspace. 
Now fix $h>0$ and $u\in\mathbb{S}^{d-1}$.
If $x+hu\preccurlyeq x$, then $x+hu\in\calS_x$, and $\mathcal{S}_x \subseteq \{z\in\mathcal{X}:\langle n_x,z-x\rangle\le 0\}$ implies
\[
\langle n_x,hu\rangle=\langle n_x,(x+hu)-x\rangle\le 0,
\]
which is  the desired conclusion.
\end{proof}
 
In the convex setting, Lemma~\ref{lem:convex-supporting-halfspace} provides an exact one-sided certificate that we will exploit in \ref{subsec:convex-lineseach} by adaptively shrinking $h$ until comparisons become fully reliable. The right plot of Figure \ref{figs:nonconvex-margin} shows an illustration.

\begin{figure}[t]
    \begin{subfigure}{0.32\textwidth}
        \centering
        \includegraphics[width=1\textwidth]{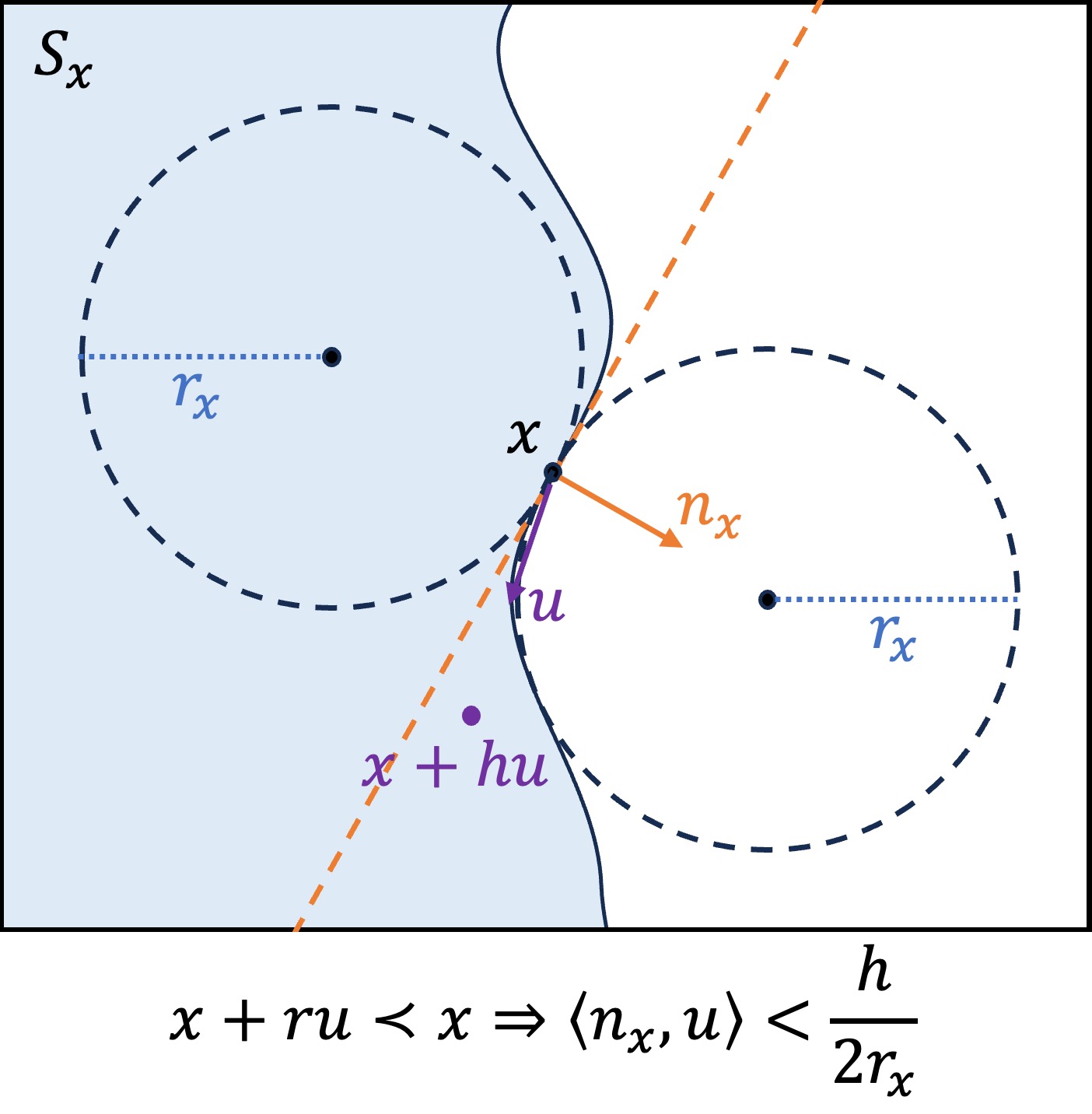} 
    \end{subfigure}
    \hfill
    \begin{subfigure}{0.32\textwidth}
        \centering
        \includegraphics[width=1\textwidth]{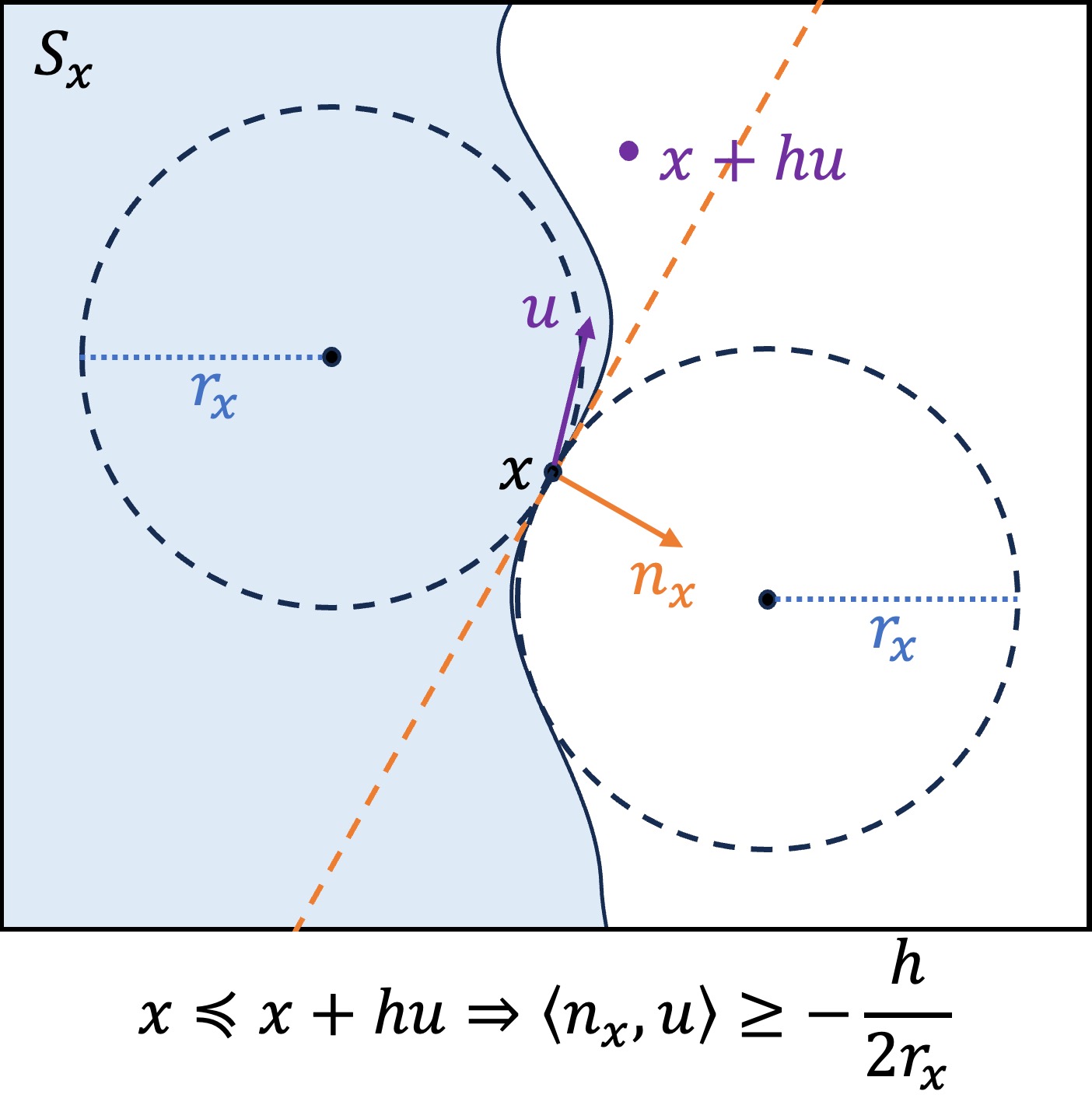} 
    \end{subfigure}
    \hfill
        \begin{subfigure}{0.32\textwidth}
        \centering
        \includegraphics[width=1\textwidth]{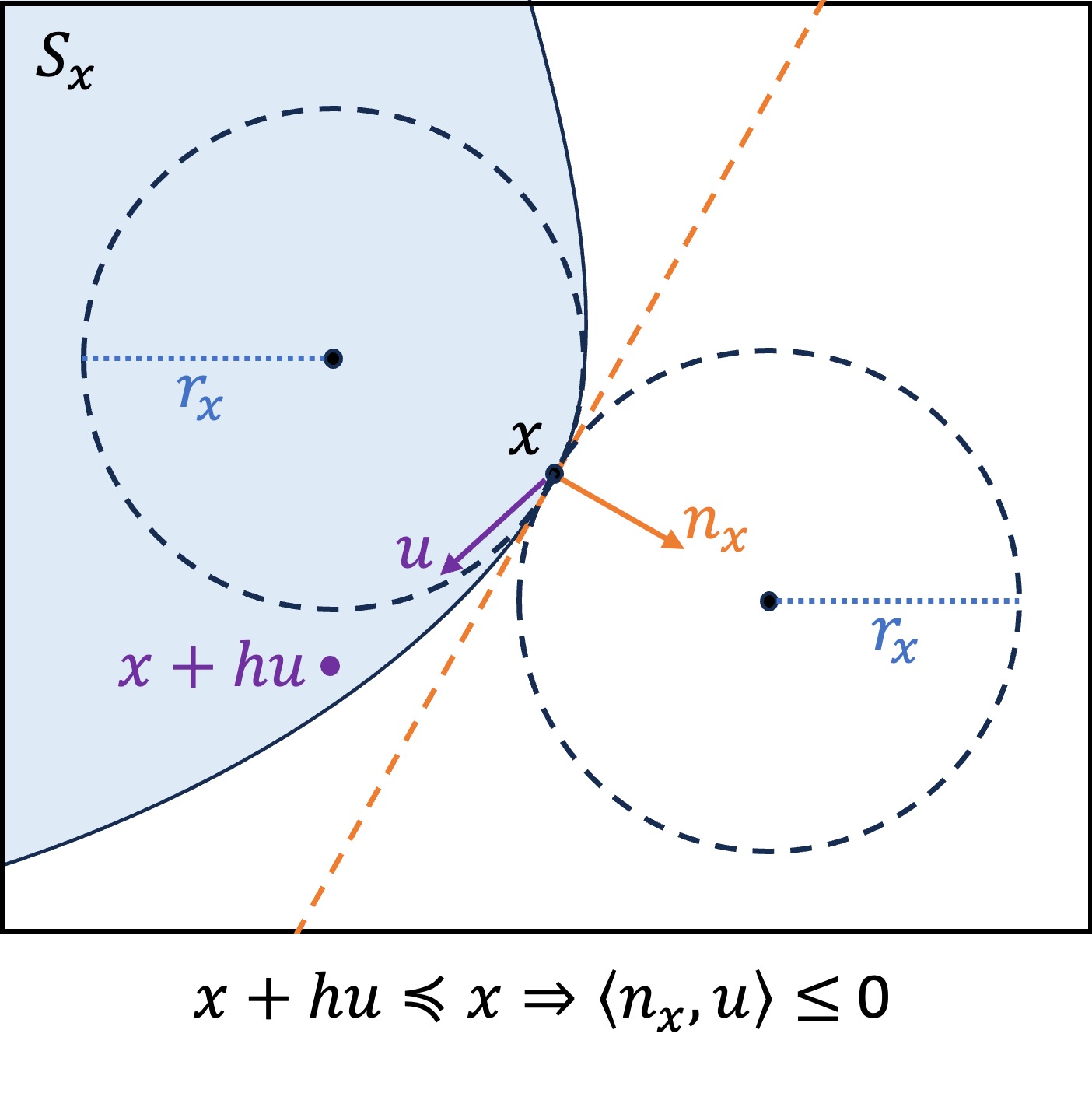} 
    \end{subfigure}
    \caption{Illustration examples of Lemma~\ref{lem:curv-margin-OSNE} for nonconvex sublevel sets $\calS_x$ (left two plots) and Lemma~\ref{lem:convex-supporting-halfspace} for convex sublevel sets $\calS_x$ (right one plot).}
    \label{figs:nonconvex-margin}
\end{figure}

\subsection{Normal direction estimation with fixed comparison radius}\label{subsec:normal-estimation}

Our normal direction estimation method has a two-layer structure. The inner layer (Algorithm~\ref{alg:planar-bisect}) performs a bisection search on the unit circle in a given two-dimensional plane, while the outer layer (Algorithm~\ref{alg:OSNE}) uses this planar bisection search repeatedly to build an orthonormal family of $d-1$ approximately tangential directions. Rather than searching for $n_x$ directly, we construct directions $y_1,\dots,y_{d-1}$ satisfying $|\langle n_x,y_i\rangle|$ small. The remaining one-dimensional orthogonal complement then approximates the normal direction line $\{\pm n_x\}$, and one final comparison fixes the outward orientation. In this subsection, we work with the scheme of fixed comparison radius $h$, while the adaptive comparison radius case will be presented in Section~\ref{subsec:convex-lineseach}.

Algorithm~\ref{alg:planar-bisect} solves the planar subproblem in $\mathrm{span}\{a,b\}$ and returns an approximately tangential direction. Algorithm~\ref{alg:OSNE} calls this planar routine $d-1$ times in carefully chosen planes so that the resulting directions remain orthonormal. Theorem~\ref{thm:OSNE} then gives the overall estimation error and comparison complexity in terms of the comparison radius $h$ and the planar depth $T$.

\begin{algorithm}[htbp]
\caption{Planar bisection for an approximately tangential direction}
\label{alg:planar-bisect}
\begin{algorithmic}[1]
\State \textbf{Input:} base point $x$, comparison radius $h>0$, orthonormal $a,b\in\mathbb{S}^{d-1}$, steps $T\in\mathbb{N}_{++}$.
\State Sample $\theta\sim\mathrm{Unif}[0,2\pi)$ and set
$\bar a\gets \cos\theta\,a+\sin\theta\,b$,
$\bar b\gets -\sin\theta\,a+\cos\theta\,b$. 

\State If using fixed comparison radius, compare $x$ with $x + h\bar a$ and $x - h\bar a$. \label{line:pb-compare-a}
\State If using adaptive comparison radius, use Algorithm~\ref{alg:linesearch} to update $h$, and compare $x$ with $x + h\bar a$ and $x - h\bar a$. \label{line:pb-compare-a-line-search}
\State
\textbf{If} {$(x+h\bar a\preccurlyeq x\ \textbf{and}\ x-h\bar a\preccurlyeq x)$ \textbf{or} $(x\preccurlyeq x+h\bar a\ \textbf{and}\ x\preccurlyeq x-h\bar a)$} \textbf{return} $\bar a$.  \label{line:pb-return-a}

\State \textbf{If} $x+h\bar a\preccurlyeq x$ \textbf{set} $(u_+,u_-)\gets(-\bar a,\bar a)$; \textbf{else set} $(u_+,u_-)\gets(\bar a,-\bar a)$.\label{line:setinitialu}
\State If using fixed comparison radius, compare $x$ with $x+h\bar b$. \label{line:pb-compare-b}
\State If using adaptive comparison radius, use Algorithm~\ref{alg:linesearch} to update $h$ and compare $x$ with $x+h\bar b$. \label{line:pb-compare-b-line-search}
\State \textbf{If} $x+h\bar b\preccurlyeq x$ \textbf{update} $u_-\gets \bar b$; \textbf{else update} $u_+\gets \bar b$.\label{line:updateu} 
\For{$t=1$ \textbf{to} $T$}
    \State $u_M\gets \dfrac{u_++u_-}{\|u_++u_-\|}$.
    \State If using fixed comparison radius, compare $x$ with $x+h u_M$.\label{line:pb-compare-um}
    \State If using adaptive comparison radius, use Algorithm~\ref{alg:linesearch} to update $h$, and compare $x$ with $x+h u_M$.\label{line:pb-compare-um-line-search}
    \State \textbf{If} $x+h u_M\preccurlyeq x$ \textbf{update} $u_-\gets u_M$; \textbf{else update} $u_+\gets u_M$. 
\EndFor
\State \Return $u_M$.
\end{algorithmic}
\end{algorithm}

Algorithm \ref{alg:planar-bisect} is essentially a bisection search on the unit circle in $\mathrm{span}\{a,b\}$. Lines \ref{line:setinitialu} and \ref{line:updateu} initialize $u_+$ and $u_-$ to be two orthogonal directions that are separated by the normal direction $n_x$ (i.e., $x+hu_-\preccurlyeq x\preccurlyeq x+hu_+$), and the loop performs a bisection search by comparing $x$ with $x+h u_M$, where $u_M$ is the normalized midpoint of $u_+$ and $u_-$. The output is the last midpoint $u_M$. Lemma \ref{lem:planar-bisect} below shows that $u_M$ is an approximately tangential direction of $n_x$.

\begin{lemma}
\label{lem:planar-bisect}
Let the preference relation be plateau-free and $r_x$-regular at $x$ with regularity radius $r_x>0$,  and $d\ge 2$.
Run Algorithm~\ref{alg:planar-bisect} with fixed comparison radius  $h > 0$, and let the algorithm return $y$.
Then the output satisfies
\begin{equation}
\label{eq:planar-bisect-bound}
|\langle n_x,y\rangle|\ \le\ \frac{h}{2r_x}\;+\;\frac{\pi}{2^{T+2}}.
\end{equation}
\end{lemma}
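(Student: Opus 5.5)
Throughout, write $\tau:=\frac{h}{2r_x}$. The idea is to work entirely in the plane $P:=\mathrm{span}\{a,b\}$ with the function $g(u):=\langle n_x,u\rangle$ for unit $u\in P$. Writing $p$ for the orthogonal projection of $n_x$ onto $P$, we have $g(u)=\langle p,u\rangle=\|p\|\cos(\angle(u,p))$, with $\|p\|\le 1$. The only input from the comparisons is Lemma~\ref{lem:curv-margin-OSNE}. Every $u$ labelled ``$+$'' (meaning $x\preccurlyeq x+hu$) satisfies $g(u)\ge-\tau$. Every $u$ labelled ``$-$'' (meaning $x+hu\preccurlyeq x$) satisfies $g(u)\le\tau$. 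Completeness guarantees that each queried direction receives at least one of the two labels.

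\textbf{Early return.} Suppose the algorithm returns $\bar a$ at line~\ref{line:pb-return-a}. If both $\pm\bar a$ are labelled ``$-$'', then $g(\bar a)\le\tau$ and $-g(\bar a)\le\tau$, so $|g(\bar a)|\le\tau$. The case where both are labelled ``$+$'' is symmetric. Either way \eqref{eq:planar-bisect-bound} holds immediately.

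\textbf{Bracketing invariant.} Otherwise one of $\pm\bar a$ is ``$-$'' and the other is ``$+$''. Lines~\ref{line:setinitialu}--\ref{line:updateu} then set $u_+,u_-$ to be orthogonal unit vectors in $P$ with $g(u_+)\ge-\tau$ and $g(u_-)\le\tau$. The bisection loop preserves this labelling invariant, and it also preserves the fact that $u_+,u_-$ are unit vectors separated by an angle $\alpha_t=\frac{\pi}{2}\,2^{-t}$. The angle halves each step because $u_M$ is the normalized bisector and replaces one endpoint. The final output $y=u_M$ is the bisector of the arc at step $T$, whose half-angle is $\frac{\pi}{2}2^{-(T+1)}=\frac{\pi}{2^{T+2}}$. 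So $y$ lies within angle $\frac{\pi}{2^{T+2}}$ of both endpoints $u_+^{(T)}$ and $u_-^{(T)}$ of the arc that was current when it was computed.

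\textbf{Bound via Lipschitz continuity.} The function $g$ restricted to the unit circle of $P$ is $\|p\|$-Lipschitz with respect to arc length, and $\|p\|\le 1$. Hence
\[
g(y)\le g(u_-)+\tfrac{\pi}{2^{T+2}}\le \tau+\tfrac{\pi}{2^{T+2}},
\qquad
g(y)\ge g(u_+)-\tfrac{\pi}{2^{T+2}}\ge -\tau-\tfrac{\pi}{2^{T+2}},
\]
which together give \eqref{eq:planar-bisect-bound}. This step does not require the arc to actually contain a zero of $g$: the two one-sided inequalities follow separately from the labels on the two endpoints. That is why the margin $\tau$ simply adds to the discretization error rather than compounding.

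\textbf{Main obstacle: indexing.} The care needed is in matching the output $y$ to the correct arc. The returned $u_M$ is computed at iteration $T$ from the arc of angle $\frac{\pi}{2}2^{-(T-1)}$, that is, before the final halving. Its distance to each endpoint is therefore half of that angle, namely $\frac{\pi}{2^{T+1}}$. That gives $\frac{\pi}{2^{T+1}}$ rather than the stated $\frac{\pi}{2^{T+2}}$.

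To recover the sharper constant, I would use the fact that at iteration $T$ the query $u_M$ itself receives a label. If the label is ``$-$'', then $g(y)\le\tau$, and the lower bound comes from $u_+$ at angular distance $\frac{\pi}{2^{T+1}}$. If the label is ``$+$'', the situation is symmetric. This alone does not give $\frac{\pi}{2^{T+2}}$ either. So I would check carefully whether the initial arc after line~\ref{line:updateu} is indeed $\frac{\pi}{2}$, in which case the bound should be read with the actual count of halvings. If the count is off by one, I would state the bound with whatever constant the count yields.
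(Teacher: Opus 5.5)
Your endpoint argument is sound, and it differs only mildly from the paper's route. The paper uses the intermediate value theorem to place some $u^\star$ with $|\langle n_x,u^\star\rangle|\le\tau$ in the final bracket and then bounds $\angle(y,u^\star)$. You instead bound $\langle n_x,y\rangle$ from above via the ``$-$'' endpoint and from below via the ``$+$'' endpoint, which avoids the intermediate value step. With correct indexing, however, both routes give only
\[
|\langle n_x,y\rangle|\ \le\ \frac{h}{2r_x}+\frac{\pi}{2^{T+1}},
\]
because $y$ bisects the bracket of angle $\alpha_{T-1}=\pi/2^{T}$, not the bracket of angle $\alpha_T$. Your ``Bracketing invariant'' paragraph states the half-angle of $\alpha_T$, so the displays in your Lipschitz paragraph are not justified as written. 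Your final paragraph rightly catches this.

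The missing step is to resolve the question, and the answer is decisive: the constant $\frac{\pi}{2^{T+2}}$ is false, so nothing (including the label of $y$) can recover it. Take $d=2$, $T=1$, and the linear preference $z\preccurlyeq w\iff\langle c,z\rangle\le\langle c,w\rangle$ with $c=\cos\psi\,a+\sin\psi\,b$ and $\psi\in(0,\pi/12]$. This preference is plateau-free with $n_x=c$ and $r_x=\infty$, so $\tau=0$. If you prefer finite $r_x$, the preference realized by $z\mapsto\|z-(x-Rc)\|$ with $R$ large gives the same run, with $r_x=R<\infty$ and $\tau$ as small as desired. Suppose the drawn $\theta$ lies in $[0,\psi)$, which has positive probability. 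Then $c=\cos\phi\,\bar a+\sin\phi\,\bar b$ with $\phi:=\psi-\theta\in(0,\psi]$, and the run goes as follows. $x+h\bar a$ is strictly worse and $x-h\bar a$ strictly better than $x$, so there is no early return and $(u_+,u_-)=(\bar a,-\bar a)$. Next, $x+h\bar b$ is strictly worse, so $u_+\gets\bar b$. The single loop iteration then returns $y=(\bar b-\bar a)/\sqrt2$, with
\[
|\langle n_x,y\rangle|=\frac{\cos\phi-\sin\phi}{\sqrt2}=\cos\Bigl(\phi+\frac{\pi}{4}\Bigr)\ \ge\ \cos\frac{\pi}{3}=\frac12\ >\ \frac{\pi}{8}=\frac{\pi}{2^{T+2}},
\]
although this value stays below $\frac{\pi}{4}=\frac{\pi}{2^{T+1}}$. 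The paper's proof contains exactly the off-by-one error you suspected. It bounds the bracket after $T$ bisections by $\pi/2^{T+1}$ and calls $y$ its midpoint, whereas $y$ is the midpoint of the previous bracket, i.e.\ an endpoint of the final one. You should therefore finish by stating and proving the corrected bound above (equivalently, the stated bound for a run with $T+1$ loop iterations), which your argument already does. The resulting factor of $2$ in the discretization term is easily absorbed in the later results, at worst by one extra bisection step per planar call.
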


\begin{proof}
Set $\tau:=h/(2r_x)$. Recall that Lemma~\ref{lem:curv-margin-OSNE} shows for any $u\in\mathbb{S}^{d-1}$,
 if $
x+hu\preccurlyeq x$ then $\langle n_x,u\rangle\le \tau$, and if $
x\preccurlyeq x+hu$ then $\langle n_x,u\rangle\ge -\tau$.

Algorithm~\ref{alg:planar-bisect} works in the plane $\mathrm{span}\{a,b\}$ (the random rotation only changes the in-plane basis).
If the algorithm returns $\bar a$, then either
(i) $x\pm h\bar a\preccurlyeq x$, or (ii) $x\preccurlyeq x\pm h\bar a$.
In case (i), applying Lemma~\ref{lem:curv-margin-OSNE} to $u=\bar a$ and $u=-\bar a$ yields
$\langle n_x,\bar a\rangle\le \tau$ and $-\langle n_x,\bar a\rangle\le \tau$, hence $|\langle n_x,\bar a\rangle|\le \tau$.
Case (ii) is analogous.
Thus the desired bound holds.
 
Otherwise, the algorithm constructs a quadrant bracket $(u_-,u_+)$ satisfying
$x+hu_-\preccurlyeq x$, $x\preccurlyeq x+hu_+$, and  $\angle(u_-,u_+)\le \frac{\pi}{2}$.
Using Lemma~\ref{lem:curv-margin-OSNE}, $\langle n_x,u_-\rangle\le \tau$, $\langle n_x,u_+\rangle\ge -\tau$.
Let $I$ denote the shorter circular arc on $\mathbb{S}^{d-1}\cap \mathrm{span}\{a,b\}$ connecting $u_-$ to $u_+$.
Since map $u\mapsto \langle n_x,u\rangle$ is continuous on $I$,  we have $u^\star\in I$ and $|\langle n_x,u^\star\rangle|\le \tau$.
At each bisection step, the algorithm sets $u_M=\frac{u_-+u_+}{\|u_-+u_+\|}$, which is the geodesic midpoint of $I$, so the bracket arc length halves.
Moreover, updating $u_-$ when $x+hu_M\preccurlyeq x$ and updating $u_+$ otherwise preserves the two implications $x+hu_-\preccurlyeq x$ and $x\preccurlyeq x+hu_+$, and hence preserves the existence of some $u^\star$ in the current bracket arc with $|\langle n_x,u^\star\rangle|\le \tau$.

After $T$ bisections, the bracket arc length is at most $\frac{\pi}{2^{T+1}}$,
so the returned midpoint $y$ satisfies $\angle(y,u^\star)\le \frac{\pi}{2^{T+2}}$ for some $u^\star$ in the final arc with
$|\langle n_x,u^\star\rangle|\le \tau$.
Finally, since $\|y\|=\|u^\star\|=1$ and $\|y-u^\star\|\le \angle(y,u^\star)$, we obtain 
\[
|\langle n_x,y\rangle|
\le
|\langle n_x,u^\star\rangle| + |\langle n_x,y-u^\star\rangle|
\le
\tau + \|y-u^\star\|
\le
\tau + \angle(y,u^\star)
\le
\tau + \frac{\pi}{2^{T+2}},
\]
which is \eqref{eq:planar-bisect-bound}.
\end{proof}

We now lift the planar bisection method to $d$ dimensions by repeatedly computing orthogonal directions $y_i$ of $n_x$ in successive two-dimensional planes. Once such a direction is computed, it is then rotated in a $2$-dimensional subspace to ensure orthonormality, so that the final normal direction estimate can be read off without solving any linear system.

\begin{algorithm}[htbp]
\caption{Normal direction estimation via orthogonal-subspace construction}
\label{alg:OSNE}
\begin{algorithmic}[1]
\State Input: point $x$, dimension $d$, comparison radius $h>0$, planar bisection steps $T\in\mathbb{N}_{++}$.
\State Sample a Haar-uniform orthonormal frame $(q_1,\dots,q_d)$ in $\X$.
\State Set $v_1\gets q_1$.
\For{$i=1$ \textbf{to} $d-1$}
    \State Run Algorithm~\ref{alg:planar-bisect} at $x$ with $(a,b)=(v_i,q_{i+1})$ and $T$ steps, and obtain $y_i$.  
    \State Set $v_{i+1}\gets -\langle y_i, q_{i+1}\rangle v_i + \langle y_i, v_i\rangle q_{i+1}$.
\EndFor
\State If using fixed comparison radius, compare $x$ with $x+h v_d$.\label{line:compare-v-d}
\State If using adaptive comparison radius, use Algorithm~\ref{alg:linesearch} to update $h$, and compare $x$ with $x+h v_d$.\label{line:compare-v-d-line-search}
\State \textbf{If} $x+h v_d\prec x$ \textbf{return} $\widehat n:=-v_d$ \textbf{else} \textbf{return} $\widehat n:=v_d$.
\end{algorithmic}
\end{algorithm}

\begin{lemma}
\label{lem:OSNE-invariants}
In Algorithm~\ref{alg:OSNE}, for each $i\in\{1,\dots,d-1\}$ the pair $(v_i,q_{i+1})$ passed to Algorithm~\ref{alg:planar-bisect} is orthonormal, and  $y_1,\dots,y_{i-1},v_i$ are orthonormal.
\end{lemma}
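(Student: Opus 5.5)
We argue by induction on $i$, carrying the joint invariant: $v_i$ is a unit vector lying in $\mathrm{span}\{q_1,\dots,q_i\}$, and $y_1,\dots,y_{i-1},v_i$ are orthonormal and all lie in $\mathrm{span}\{q_1,\dots,q_i\}$. The base case $i=1$ is immediate: $v_1=q_1$ is a unit vector in $\mathrm{span}\{q_1\}$, and the list $y_1,\dots,y_0$ is empty. Orthonormality of the pair $(v_i,q_{i+1})$ then follows at once from the invariant: $\|v_i\|=\|q_{i+1}\|=1$, and since $v_i\in\mathrm{span}\{q_1,\dots,q_i\}$ and the frame $(q_1,\dots,q_d)$ is orthonormal, $\langle v_i,q_{i+1}\rangle=0$.

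For the inductive step, the first thing to record is that Algorithm~\ref{alg:planar-bisect}, called with the orthonormal pair $(a,b)=(v_i,q_{i+1})$, returns a unit vector in $\mathrm{span}\{a,b\}$. Indeed, $\bar a,\bar b$ are a rotation of $(a,b)$ within the plane, so they form an orthonormal basis of it; $u_\pm$ are always chosen among $\pm\bar a,\pm\bar b$ or earlier midpoints; and each midpoint $u_M=(u_++u_-)/\|u_++u_-\|$ is a normalized vector in the plane. The normalization is well defined because the bracket arc has length at most $\pi/2$, so $u_++u_-\neq 0$. Hence we can write $y_i=\alpha v_i+\beta q_{i+1}$ with $\alpha^2+\beta^2=1$, where $\alpha=\langle y_i,v_i\rangle$ and $\beta=\langle y_i,q_{i+1}\rangle$.

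The update then reads $v_{i+1}=-\beta v_i+\alpha q_{i+1}$, which is the in-plane rotation of $y_i$ by $90^\circ$. Direct computation gives $\|v_{i+1}\|^2=\alpha^2+\beta^2=1$ and $\langle v_{i+1},y_i\rangle=-\alpha\beta+\alpha\beta=0$. For each $j<i$, both $y_i$ and $v_{i+1}$ are linear combinations of $v_i$ and $q_{i+1}$. By the induction hypothesis $y_j\perp v_i$, and $y_j\perp q_{i+1}$ because $y_j\in\mathrm{span}\{q_1,\dots,q_i\}$; therefore $y_j\perp y_i$ and $y_j\perp v_{i+1}$. Finally, $y_i$ and $v_{i+1}$ lie in $\mathrm{span}\{q_1,\dots,q_{i+1}\}$, so the invariant holds at $i+1$, including the span condition that the next step relies on.

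The only step needing real care is the claim that the planar routine's output stays exactly in $\mathrm{span}\{a,b\}$ with unit norm, including the early-return branch that outputs $\bar a$. This is routine bookkeeping rather than a genuine obstacle. The subtle point is that the span condition must be included in the inductive invariant, because it is what gives $q_{i+1}\perp y_j$ for $j<i$.
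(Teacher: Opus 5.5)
Your proof is correct and follows the paper's own argument: the same induction carrying the span condition $\mathrm{span}\{y_1,\dots,y_{i-1},v_i\}\subseteq\mathrm{span}\{q_1,\dots,q_i\}$, the same $90^\circ$ in-plane rotation computation for $v_{i+1}$, and the same use of that span condition to get $q_{i+1}\perp v_i$. You also check explicitly two facts the paper uses without comment: that the planar routine returns a unit vector in $\mathrm{span}\{a,b\}$, and that $y_i\perp y_j$ for $j<i$.
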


\begin{proof} 
We prove by induction that for each $i\in\{1,\dots,d-1\}$ the set $\{y_1,\dots,y_{i-1},v_i\}$ is orthonormal and satisfies
\[
\mathrm{span}\{y_1,\dots,y_{i-1},v_i\}\subseteq \mathrm{span}\{q_1,\dots,q_i\}.
\]
For $i=1$, this holds since $v_1=q_1$.
Let $y_i$ be the unit vector returned by that planar call and define $c_i:=\langle y_i,v_i\rangle$ and $s_i:=\langle y_i,q_{i+1}\rangle$ so that $y_i=c_i v_i+s_i q_{i+1}$ with $c_i^2+s_i^2=1$.
The update sets $v_{i+1}:=-s_i v_i+c_i q_{i+1}$, which is unit and satisfies $\langle v_{i+1},y_i\rangle=0$.
Moreover, $v_{i+1}$ is orthogonal to all previous $y_j$ because it lies in $\mathrm{span}\{v_i,q_{i+1}\}$ and both $v_i$ and $q_{i+1}$ are orthogonal to $\{y_1,\dots,y_{i-1}\}$.
Thus $\{y_1,\dots,y_i,v_{i+1}\}$ is orthonormal.
Finally, since both $v_i$ and $q_{i+1}$ lie in $\mathrm{span}\{q_1,\dots,q_{i+1}\}$, so does $v_{i+1}$, completing the induction. Taking $i=d-1$ shows that $\{y_1,\dots,y_{d-1},v_d\}$ is an orthonormal basis of $\X$.

Since $q_{i+1}$ is orthogonal to $\mathrm{span}\{q_1,\dots,q_i\}$, it is also orthogonal to $v_i$ and all previous $y_j$.
Because $\|q_{i+1}\|=\|v_i\|=1$, the pair $(v_i,q_{i+1})$ is orthonormal.
\end{proof}

Now we are ready for showing the accuracy guarantee and the number of pairwise comparisons used by Algorithm~\ref{alg:OSNE}.

\begin{theorem}[Normal direction estimation] 
\label{thm:OSNE}
Assume the preference relation is plateau-free and $r_x$-regular at $x$ with regularity radius $r_x>0$, and $d\ge 2$. In the regime of fixed comparison radius $h > 0$,  the output $\widehat n$ of Algorithm~\ref{alg:OSNE} satisfies
\begin{equation}
\label{eq:OSNE-error-bound}
\|n_x-\widehat n\|
\ \le\
2\sqrt{d-1}\left(\frac{h}{r_x}+\frac{\pi}{2^{T+1}}\right).
\end{equation}
and Algorithm~\ref{alg:OSNE} uses at most $(d-1)(T+3)+1$ pairwise comparisons.
\end{theorem}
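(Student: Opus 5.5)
The plan is to combine the per-plane guarantee of Lemma~\ref{lem:planar-bisect} with the orthonormality from Lemma~\ref{lem:OSNE-invariants}, and then handle the final orientation comparison separately using Lemma~\ref{lem:curv-margin-OSNE}.

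\emph{Step 1: the tangential components are small.} Set $\tau:=h/(2r_x)$ and $\delta:=\tau+\pi/2^{T+2}=\tfrac12\bigl(h/r_x+\pi/2^{T+1}\bigr)$. By Lemma~\ref{lem:OSNE-invariants}, every planar call receives an orthonormal pair. Hence Lemma~\ref{lem:planar-bisect} applies to each output and gives $|\langle n_x,y_i\rangle|\le\delta$ for $i=1,\dots,d-1$. Also by Lemma~\ref{lem:OSNE-invariants}, $\{y_1,\dots,y_{d-1},v_d\}$ is an orthonormal basis of $\X$. Expanding $n_x$ in this basis and writing $c:=\langle n_x,v_d\rangle$ gives
\[
s^2:=\sum_{i=1}^{d-1}\langle n_x,y_i\rangle^2=1-c^2\le (d-1)\delta^2 .
\]

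\emph{Step 2: the error when the sign is correct.} Let $\sigma:=\sign(c)$, choosing either sign if $c=0$. Then
\[
\|n_x-\sigma v_d\|^2=2-2|c|=2\bigl(1-\sqrt{1-s^2}\bigr)\le 2s^2 .
\]
Therefore $\|n_x-\sigma v_d\|\le\sqrt2\,\sqrt{d-1}\,\delta$. This is well below the target $2\sqrt{d-1}\cdot 2\delta$, so if $\widehat n=\sigma v_d$ we are done.

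\emph{Step 3: the orientation step, which I expect to be the main obstacle.} The final comparison can choose the wrong sign, and then $\|n_x-\widehat n\|=\sqrt{2+2|c|}\ge\sqrt2$. So it must be shown that a wrong sign only happens when the right-hand side of \eqref{eq:OSNE-error-bound} already exceeds $2$, which bounds the distance between any two unit vectors.
\begin{itemize}
\item If $x+hv_d\prec x$, Lemma~\ref{lem:curv-margin-OSNE}(ii) gives $c\le\tau$, and the algorithm returns $-v_d$.
\item Otherwise completeness gives $x\preccurlyeq x+hv_d$, so Lemma~\ref{lem:curv-margin-OSNE}(i) gives $c\ge-\tau$, and the algorithm returns $v_d$.
\end{itemize}
In either case, a wrong sign forces $|c|\le\tau$. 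Now split on the size of $\tau$.
\begin{itemize}
\item If $\tau>1/2$, then $h/r_x>1$, so the right-hand side is at least $2\sqrt{d-1}\ge2$.
\item If $\tau\le1/2$, then $(d-1)\delta^2\ge s^2=1-c^2\ge 3/4$. Hence $2\sqrt{d-1}\cdot2\delta\ge 4\sqrt{3/4}=2\sqrt3>2$.
\end{itemize}
So \eqref{eq:OSNE-error-bound} holds trivially whenever the orientation is wrong.

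\emph{Step 4: counting comparisons.} Each call to Algorithm~\ref{alg:planar-bisect} with fixed $h$ uses $2$ comparisons on $\pm\bar a$, $1$ on $\bar b$, and $T$ in the loop, for at most $T+3$. There are $d-1$ calls plus the final comparison with $x+hv_d$, for a total of at most $(d-1)(T+3)+1$.
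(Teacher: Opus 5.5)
Your proof is correct and follows the paper's argument essentially step for step. It uses the per-plane bound from Lemma~\ref{lem:planar-bisect}, the expansion of $n_x$ in the orthonormal basis $\{y_1,\dots,y_{d-1},v_d\}$, the bound $\|n_x-\widehat n\|^2=2-2|c|\le 2\sum_i\langle n_x,y_i\rangle^2$ when the sign is right, Lemma~\ref{lem:curv-margin-OSNE} for the orientation step, and the observation that the bound is vacuous once its right-hand side reaches $2$. The only difference is cosmetic: you argue contrapositively (a wrong orientation forces $|c|\le\tau$, which makes the right-hand side at least $2$), while the paper first restricts to $\sqrt{d-1}\,\gamma<\tfrac12$ and deduces $|\alpha_0|>\sqrt{3}/2>\tau$, so the orientation must be correct.
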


\begin{proof} 
If the right-hand side of \eqref{eq:OSNE-error-bound} is at least $2$, then the claim is trivial because both $n_x$ and $\widehat n$ are unit vectors. Hence later we only consider the case $h<r_x$.

Let $\tau:= h/(2 r_x)$ and $\gamma:=\tau+\pi/2^{T+2}$.
Lemma~\ref{lem:planar-bisect} implies that for each $i\in\{1,\dots,d-1\}$, $|\langle n_x,y_i\rangle|\le \gamma$ almost surely. 
Using Lemma~\ref{lem:OSNE-invariants}, we can write
$n_x=\alpha_0 v_d+\sum_{i=1}^{d-1}\beta_i y_i$
where $\alpha_0:=\langle n_x,v_d\rangle$ and $\beta_i:=\langle n_x,y_i\rangle$.
Then $\sum_{i=1}^{d-1}\beta_i^2\le (d-1)\gamma^2$.

If $\sqrt{d-1}\,\gamma\ge \tfrac12$, then the right-hand side of \eqref{eq:OSNE-error-bound} is at least $2$, while $\|n_x-\widehat n\|\le 2$ for any unit vectors; hence \eqref{eq:OSNE-error-bound} holds trivially.

Assume now that $\sqrt{d-1}\,\gamma<\tfrac12$.
Then $\sum_{i=1}^{d-1}\beta_i^2<\tfrac14$ and therefore
$|\alpha_0|=\sqrt{1-\sum_{i=1}^{d-1}\beta_i^2}>\sqrt{3}/2$.
In particular $|\alpha_0|>\tau$ since $\tau\le \gamma<\tfrac12$.
We claim that the final orientation step returns $\widehat n=\mathrm{sign}(\alpha_0)\,v_d$.
Indeed, if $\alpha_0>\tau$ then $x+h v_d\preccurlyeq x$ cannot occur (by Lemma~\ref{lem:curv-margin-OSNE}(ii)), so the algorithm cannot output $-v_d$.
Similarly, if $\alpha_0<-\tau$ then $x\preccurlyeq x+h v_d$ cannot occur (by Lemma~\ref{lem:curv-margin-OSNE}(i)), so the algorithm cannot output $v_d$.
Thus $\widehat n=\mathrm{sign}(\alpha_0)\,v_d$ and $\langle n_x,\widehat n\rangle=|\alpha_0|$.
Finally,
\[
\|n_x-\widehat n\|^2
=
(1-|\alpha_0|)^2+\sum_{i=1}^{d-1}\beta_i^2
\le
2\sum_{i=1}^{d-1}\beta_i^2
\le
2(d-1)\gamma^2,
\]
so $\|n_x-\widehat n\|\le 2\sqrt{d-1}\,\gamma\le 4\sqrt{d-1}\,\gamma$, proving \eqref{eq:OSNE-error-bound}.

Finally, each call to Algorithm~\ref{alg:planar-bisect} uses at most $T+3$ comparisons, and the final orientation step uses one more. Overall, at most $(d-1)(T+3)+1$ comparisons are used.
\end{proof}
 
The bound \eqref{eq:OSNE-error-bound} separates the two sources of error:
$2\sqrt{d-1}\frac{h}{r_x}$ is the local-geometry-induced bias from finite-radius comparisons (Lemma~\ref{lem:curv-margin-OSNE}),
and $2\sqrt{d-1} \frac{\pi}{2^{T+1}}$ is the algorithmic approximation error from terminating planar bisection after $T$ steps. Intuitively, the smaller $r_x$ is, the faster the local geometry can change, leading to a larger bias term in the error.

\begin{corollary}
\label{cor:OSNE-epsilon}
Under the assumptions of Theorem~\ref{thm:OSNE}, for any $\epsilon\in(0,2]$, let
$h \in (0, \frac{\epsilon r_x}{8\sqrt{d-1}}]$, and $T = \left\lceil\log_2\!\Big(\frac{8\sqrt{d-1}\pi}{\epsilon}\Big)\right\rceil - 2$.
Then $\|n_x-\widehat n\| \le \epsilon$, and Algorithm~\ref{alg:OSNE} uses at most
\begin{equation}\label{eq:OSNE-comparisons-bound}
N_\epsilon \;=\;
(d-1)\left\lceil\log_2\left(\frac{51\sqrt{d-1}}{\epsilon}\right)\right\rceil + 1
\end{equation}
pairwise comparisons.
\end{corollary}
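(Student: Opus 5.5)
The plan is to instantiate the two error terms of Theorem~\ref{thm:OSNE} with the prescribed $h$ and $T$, bound each by a fixed fraction of $\epsilon$, and then rewrite the comparison count $(d-1)(T+3)+1$ in closed form.

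\emph{Admissibility of $T$.} Algorithm~\ref{alg:planar-bisect} requires $T\in\mathbb{N}_{++}$, so I first check $T\ge 1$. Since $\epsilon\le 2$ and $d\ge 2$, we have $8\sqrt{d-1}\pi/\epsilon\ge 4\pi>12$. Its base-$2$ logarithm therefore exceeds $3.5$, so the ceiling is at least $4$ and $T\ge 2$.

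\emph{Bias term.} Using $h\le \frac{\epsilon r_x}{8\sqrt{d-1}}$ directly,
\[
2\sqrt{d-1}\,\frac{h}{r_x}\le \frac{\epsilon}{4}.
\]

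\emph{Bisection term.} From $T+2\ge \log_2\!\big(8\sqrt{d-1}\pi/\epsilon\big)$ we get
\[
2^{T+1}\ge \frac{4\sqrt{d-1}\,\pi}{\epsilon},
\qquad\text{hence}\qquad
2\sqrt{d-1}\,\frac{\pi}{2^{T+1}}\le \frac{\epsilon}{2}.
\]
Substituting both bounds into \eqref{eq:OSNE-error-bound} gives $\|n_x-\widehat n\|\le \tfrac{3}{4}\epsilon\le\epsilon$.

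\emph{Comparison count.} Theorem~\ref{thm:OSNE} gives at most $(d-1)(T+3)+1$ comparisons. With the chosen $T$, and using $\lceil a\rceil+1=\lceil a+1\rceil$,
\[
T+3=\Big\lceil \log_2\!\Big(\tfrac{8\sqrt{d-1}\pi}{\epsilon}\Big)\Big\rceil+1
=\Big\lceil \log_2\!\Big(\tfrac{16\pi\sqrt{d-1}}{\epsilon}\Big)\Big\rceil .
\]
Since $16\pi<51$ and $\lceil\log_2(\cdot)\rceil$ is monotone, $T+3$ is at most $\lceil\log_2(51\sqrt{d-1}/\epsilon)\rceil$. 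This yields the bound $N_\epsilon$ in \eqref{eq:OSNE-comparisons-bound}.

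\emph{Main obstacle.} No step is conceptually hard; the only delicate point is bookkeeping with the ceiling and the offset $-2$. The proof must use the inequality $T+2\ge\log_2(\cdot)$, not an equality, and must absorb the constant $16\pi$ into $51$ without losing the ceiling.
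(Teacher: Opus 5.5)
Your proposal is correct and follows the paper's proof essentially verbatim. You substitute the prescribed $h$ and $T$ into \eqref{eq:OSNE-error-bound}, check that $T\in\mathbb{N}_{++}$, and rewrite $(d-1)(T+3)+1$ using $\lceil a\rceil+1=\lceil a+1\rceil$ and $16\pi<51$; your explicit $\epsilon/4+\epsilon/2$ split and the check $T\ge 2$ just spell out what the paper compresses into one line.
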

\begin{proof}
Substituting $h \le \frac{\epsilon r_x}{8\sqrt{d-1}}$ and $T = \lceil\log_2(\frac{8\sqrt{d-1}\pi}{\epsilon})\rceil - 2$ into \eqref{eq:OSNE-error-bound} yields $\|n_x-\widehat n\| \le \epsilon$. Note that for $\epsilon \in (0,2]$, $h > 0$ and $T \in \mathbb{N}_{++}$ so they are valid parameters. The number of comparisons is at most
$$
(d-1)(T+3)+1 = (d-1)\left(\left\lceil\log_2\left(\frac{8\sqrt{d-1}\pi}{\epsilon}\right)\right\rceil +1 \right)+1 \le (d-1)\left\lceil\log_2\left(\frac{51\sqrt{d-1}}{\epsilon}\right)\right\rceil   + 1 ,
$$
where the last inequality uses $\log_2(8\pi)  +1 =  \log_2(16\pi) < \log_2(51)$. This finishes the proof.
\end{proof}

Corollary~\ref{cor:OSNE-epsilon} shows that, once the fixed comparison radius satisfies
$h\lesssim \epsilon r_x/\sqrt{d}$, Algorithm~\ref{alg:OSNE} estimates the normal direction to accuracy $\epsilon$ using $O(d\log(d/\epsilon))$ comparisons. In Section~\ref{sec:normal-lower-bound}, we will prove a minimax lower bound of order $\Omega(d\log(1/\epsilon))$ for normal direction estimation. Hence the above complexity is nearly optimal. Making the range of proper comparison radii as large as possible is also preferable, as it would be easier to choose a suitable $h$ in practice, and comparisons between very close points are less robust practically.

It should also be noted that the case $d=1$ should be separately analyzed, because Algorithm~\ref{alg:OSNE} reduces to a single comparison between $x$ and $x+h v_1$, where $v_1\in\mathbb S^0$, and Algorithm~\ref{alg:planar-bisect} is never used.

\begin{proposition}\label{prop:OSNE-1d} 
    When $d=1$, let $v_1\in\mathbb S^0$ be the unit direction used by Algorithm~\ref{alg:OSNE}. The algorithm reduces to a single comparison between $x$ and $x+h v_1$.  Still, suppose the preference relation is plateau-free and $r_x$-regular at $x$ with regularity radius $r_x>0$. The output $\widehat n$ is $-v_1$ if $x+h v_1\prec x$ and $v_1$ otherwise. In this case, $\widehat n=n_x$ if $h<2r_x$. The whole procedure uses only one comparison.
\end{proposition}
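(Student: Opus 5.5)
The plan is to use Lemma~\ref{lem:curv-margin-OSNE} directly, since in dimension one the unit sphere is $\mathbb S^0=\{+1,-1\}$. Both $n_x$ and $v_1$ lie in this two-point set, so $n_x=\pm v_1$. The structural claims are immediate from Algorithm~\ref{alg:OSNE} with $d=1$. The \texttt{for} loop over $i=1,\dots,d-1$ is empty, so $v_d=v_1$ and Algorithm~\ref{alg:planar-bisect} is never called. The only oracle call is the final orientation comparison between $x$ and $x+hv_1$, so exactly one comparison is used and the output rule is as stated. It remains to show $\widehat n=n_x$ whenever $h<2r_x$.

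Set $\tau:=h/(2r_x)$. The hypothesis $h<2r_x$ is exactly $\tau<1$, and $\langle n_x,v_1\rangle\in\{+1,-1\}$. We split into the two cases.

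\emph{Case $n_x=v_1$.} Then $\langle n_x,v_1\rangle=1>\tau$. By Lemma~\ref{lem:curv-margin-OSNE}(ii), $x+hv_1\preccurlyeq x$ cannot hold. In particular $x+hv_1\prec x$ fails, so the algorithm returns $v_1=n_x$.

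\emph{Case $n_x=-v_1$.} Then $\langle n_x,v_1\rangle=-1<-\tau$. By Lemma~\ref{lem:curv-margin-OSNE}(i), $x\preccurlyeq x+hv_1$ fails. Completeness then gives $x+hv_1\preccurlyeq x$, and since $x\preccurlyeq x+hv_1$ fails the preference is strict, i.e.\ $x+hv_1\prec x$. The algorithm therefore returns $-v_1=n_x$.

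There is no real obstacle here. The only points needing care are that Lemma~\ref{lem:curv-margin-OSNE} requires $x\notin\X^\star$, which is the standing assumption inherited from Theorem~\ref{thm:OSNE}, and that the output test uses the strict relation $\prec$, which is why completeness is invoked in the second case.
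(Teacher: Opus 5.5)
Your proof is correct and follows essentially the same route as the paper's: both reduce to the single orientation comparison and apply Lemma~\ref{lem:curv-margin-OSNE} with $u=\pm v_1$, using $\langle n_x,v_1\rangle\in\{\pm1\}$ and $h/(2r_x)<1$. The only difference is that you split on the value of $n_x$ rather than on the comparison outcome, which makes explicit (via completeness and the strict test $x+hv_1\prec x$) that a tie cannot occur; your remark that $x\notin\X^\star$ is needed for part~(i) of the lemma is apt, since the paper's argument relies on it implicitly as well.
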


\begin{proof}
    In one dimension, $\mathbb S^0=\{\pm e\}$, so $n_x,\widehat n\in\{\pm e\}$.
    Applying Lemma~\ref{lem:curv-margin-OSNE} with $u=e$ and $u=-e$ shows that if $h < 2 r_x$, then $x+he\preccurlyeq x$ implies $\langle n_x,e\rangle \le \tfrac{h}{2r_x} < 1$, so $n_x = -e$; and $x\prec x+he$ implies $\langle n_x,e\rangle \ge -\tfrac{h}{2r_x} > -1$, so $n_x = e$.
\end{proof}

When a smooth function $f$ realizes the preference relation, the normal direction coincides with the normalized gradient. There are existing works in estimating the normalized gradient using comparison oracles. The most common approach is to use the random-direction sign estimator of the form $\mathscr{C}_f(x+hu,x) \,u$ where $\mathscr{C}_f$ is the comparison oracle of $f$ defined in \eqref{eq:Cf-def}. This estimator's expectation is proportional to the normalized gradient when $h$ is sufficiently small. See, for example, \cite{saha2021dueling,saha2025dueling,ren2026riemannian,cai2022one}, for this idea and its usage in sparse problems, Riemannian optimization and noisy oracle settings. However, the cost of simply taking the average of repetitions increases polynomially with $d/\epsilon$ for a target estimation error $\epsilon$.
A second approach, proposed in \cite{karabag2021smooth}, estimates the normalized gradient by maintaining a cone of plausible directions and shrinking this cone using half-space cuts extracted from comparison feedback. However, the cost grows quadratically with the dimension $d$ (see the proof of Theorems 1 and 2 in \cite{karabag2021smooth}). 

A third approach, studied in \cite{zhang2024comparisons}, considers $L$-smooth functions and proposes a two-stage method: first finding a dominant gradient component that is the largest in magnitude, and then estimating the ratio of this component to each of the other components of the gradient vector. After translating its notation to ours, its method gets an $\epsilon$-accurate normalized gradient using $O(d\,\log(d/\epsilon))$ comparisons, but requires the comparison radius to satisfy
$h\lesssim \epsilon\|\nabla f(x)\|/(L d^{3/2})$. Since Lemma~\ref{lem:regularityradius-hess-grad} gives
$r_x\ge \|\nabla f(x)\|/L$, this means
$h\lesssim \epsilon r_x/d^{3/2}$, which is more restrictive by an additional factor of $d$ than the admissible scale in Corollary~\ref{cor:OSNE-epsilon}. When preparing the first manuscript of this work, we realized a recent concurrent work \cite{tao2026gradient} proposes to first rotate the basis towards a reference direction (which is similar to the estimator of the first approach) and then continue with \cite{zhang2024comparisons}'s approach. This improves the required comparison radius to order $\epsilon\|\nabla f(x)\|/(L\sqrt d)$, but the guarantee is only with constant success probability because the reference-direction construction is randomized.

All of the above normalized gradient estimation methods assume the existence of an underlying smooth objective function and require function-dependent quantities such as a smoothness constant $L$ and a lower bound on $\|\nabla f(x)\|$. Our estimator is formulated directly in the function-free preference geometry. The next subsection addresses the remaining drawback of the need to pre-tune $h$ for convex preference by introducing an adaptive strategy.

\subsection{Normal direction estimation with adaptive comparison radius}
\label{subsec:convex-lineseach}
 
In the normal direction estimation method in Section \ref{subsec:normal-estimation}, the error bound of Theorem \ref{thm:OSNE} contains a bias term of the form $2\sqrt{d-1}\frac{h}{r_x}$, in which $r_x$ is the regularity radius of $x$. The regularity radius may vary substantially from point to point. As discussed in Section~\ref{subsec:optimality-stationarity} (and later in Section \ref{subsec:regularity-vs-distance}), a small regularity radius may itself signal proximity to an optimal or near-stationary region.  
Consequently, when normal direction estimation is used inside an optimization algorithm, it may be unrealistic to fix a single comparison radius along the entire trajectory.  
 
Therefore, in this subsection we develop a normal direction estimation method  that can automatically adjust the comparison radius using line search. According to Lemma \ref{lem:convex-supporting-halfspace}, if additionally assuming convexity of the current sublevel set $\calS_x$, we may decrease the comparison radius until comparisons yield an exact halfspace certificate of $ n_x$.
With exact halfspace certificates, the normal direction estimation error can be controlled purely by the bisection depth $T$, even when $r_x$ is unknown.
The price of line search appears only through a logarithmic dependence on $1/r_x$.

The method follows the same outer orthogonal-subspace construction (Algorithm~\ref{alg:OSNE}) and the same planar bisection method
(Algorithm~\ref{alg:planar-bisect}), but we implement their comparison steps through an additional subroutine (Algorithm \ref{alg:linesearch}) to potentially adjust the comparison radius $h$.
The comparison radius $h$ is stored in the global memory and shared across all calls to Algorithm~\ref{alg:OSNE}, Algorithm~\ref{alg:planar-bisect}, and Algorithm~\ref{alg:linesearch}, and it is initialized once to $h_0$ and may be decreased over time.

\begin{algorithm}[htbp]
\caption{Comparison with line search}
\label{alg:linesearch}
\begin{algorithmic}[1]
\State Input: base point $x$, direction $u\in\mathbb{S}^{d-1}$, and shared comparison radius $h$ initialized once to $h_0$. 
\While{{$x\prec x+h u$ \textbf{and} $x\prec x-h u$}}
  \State Halve the comparison radius $h\gets h/2$  
\EndWhile
\State
\textbf{Return} the updated $h$ and whether $x\prec x+h u$ and $x\prec x-h u$. 
\end{algorithmic}
\end{algorithm}

Before presenting the main theorem, we introduce three useful lemmas. 
The first lemma quantifies how many radius halvings are needed before termination of Algorithm~\ref{alg:linesearch}.

\begin{lemma}
\label{lem:linesearch}
Assume the preference relation is plateau-free and $r_x$-regular at $x$ with $r_x>0$.
Fix $u\in\mathbb{S}^{d-1}$ with $\langle n_x,u\rangle\neq 0$.
Consider one call of Algorithm~\ref{alg:linesearch} entered with comparison radius $h_{\mathrm{in}}>0$.
Then the while-loop terminates after at most
\begin{equation}
\label{eq:certsign-halvings}
\left\lceil \log_2\!\Big(\frac{h_{\mathrm{in}}}{r_x |\langle n_x,u\rangle|}\Big)\right\rceil_{+}
\end{equation}
halvings. 
\end{lemma}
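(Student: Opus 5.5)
The plan is to show that once the current radius $h$ drops to at most $r_x|\langle n_x,u\rangle|$, at least one of the two strict comparisons in the loop condition must fail, so the loop stops. Counting halvings from $h_{\mathrm{in}}$ then gives \eqref{eq:certsign-halvings}. The tool is Lemma~\ref{lem:curv-margin-OSNE}. Only the two-sided tangent-ball geometry is needed, so no convexity is assumed.

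\textbf{Step 1 (termination criterion).} Set $\alpha:=\langle n_x,u\rangle\neq 0$ and suppose the loop is entered with some radius $h$ satisfying $\frac{h}{2r_x}<|\alpha|$. If $\alpha>0$, apply Lemma~\ref{lem:curv-margin-OSNE}(i) to the direction $-u$. Since $\langle n_x,-u\rangle=-\alpha<-\frac{h}{2r_x}=-\tau$, the contrapositive of (i) shows that $x\preccurlyeq x-hu$ is false. Hence $x\prec x-hu$ fails and the loop condition is violated. If $\alpha<0$, apply the same argument to the direction $u$, which shows that $x\prec x+hu$ fails. In both cases the while-loop does not execute at radius $h$. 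Lemma~\ref{lem:curv-margin-OSNE} requires $x\notin\X^\star$; I would inherit this from the standing assumptions of the section, where regularity with a unique normal is assumed at a non-optimal $x$, and note it explicitly.

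\textbf{Step 2 (counting).} After $k$ halvings the radius is $h_{\mathrm{in}}2^{-k}$. By Step 1, the loop has certainly stopped once $h_{\mathrm{in}}2^{-k}<2r_x|\alpha|$. It therefore suffices that $h_{\mathrm{in}}2^{-k}\le r_x|\alpha|$, i.e.\ $k\ge\log_2\bigl(h_{\mathrm{in}}/(r_x|\alpha|)\bigr)$. The smallest nonnegative integer with this property is the positive-part ceiling in \eqref{eq:certsign-halvings}. The factor of $2$ of slack also takes care of the boundary case of equality, so the strict inequality in Step 1 causes no trouble. If $h_{\mathrm{in}}\le r_x|\alpha|$ already holds, then zero halvings occur, which matches the $\lceil\cdot\rceil_+$ convention.

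\textbf{Main obstacle.} The only delicate point is the sign bookkeeping in Step 1: one must choose the direction ($u$ or $-u$) whose inner product with $n_x$ is sufficiently negative, and then use the strict-preference form of the loop guard. Everything else is direct substitution.
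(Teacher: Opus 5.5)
Your proposal is correct and is essentially the paper's proof. The paper sets $h_\star:=h_{\mathrm{in}}/2^{t_\star}\le r_x|\langle n_x,u\rangle|$ and shows that the inward point $x+h_\star u_{\mathrm{in}}$, with $u_{\mathrm{in}}:=-\sign(\langle n_x,u\rangle)u$, lies in the open interior tangent ball; this is exactly Lemma~\ref{lem:curv-margin-OSNE}(i) applied to $u_{\mathrm{in}}$, recomputed inline. Your caveat about $x\notin\X^\star$ applies equally to the paper's appeal to plateau-freeness, and it can be dropped: $x+hu_{\mathrm{in}}\in\calS_x$ already gives $x+hu_{\mathrm{in}}\preccurlyeq x$, which by itself rules out $x\prec x+hu_{\mathrm{in}}$.
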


\begin{proof}
Let $
t_\star
:=\left\lceil \log_2\!\left(\frac{h_{\mathrm{in}}}{r_x |\langle n_x,u\rangle|}\right)\right\rceil_{+},$ and $h_\star:=\frac{h_{\mathrm{in}}}{2^{t_\star}}$.
Then $h_\star\le r_x |\langle n_x,u\rangle|$, hence $h_\star\le r_x$.
Define the inward direction $u_{\mathrm{in}}:=-\sign(\langle n_x,u\rangle)u$ so that $\langle n_x,u_{\mathrm{in}}\rangle=-|\langle n_x,u\rangle|.$
Since $h_\star\le r_x |\langle n_x,u\rangle|$ and $|\langle n_x,u\rangle|>0$, we have $\langle n_x,u_{\mathrm{in}}\rangle=-|\langle n_x,u\rangle|<-\frac{h_\star}{2r_x}.$
Therefore,
\[
\|(x+h_\star u_{\mathrm{in}})-(x-r_x n_x)\|^2
=
\|h_\star u_{\mathrm{in}}+r_x n_x\|^2
=
h_\star^2+r_x^2+2h_\star r_x\langle u_{\mathrm{in}},n_x\rangle
<
r_x^2,
\]
so $x+h_\star u_{\mathrm{in}}\in \operatorname{int}B(x-r_x n_x,r_x)\subseteq \operatorname{int}(\calS_x)$.
By plateau-freeness, interior points cannot tie with $x$, hence $x+h_\star u_{\mathrm{in}}\prec x$.

In particular, $x\prec x+h_\star u_{\mathrm{in}}$ is false, so at radius $h_\star$ the while-condition
\(
x\prec x+hu \ \text{and}\ x\prec x-hu
\)
cannot hold. Thus the loop terminates after at most $t_\star$ halvings, which yields \eqref{eq:certsign-halvings}. 
\end{proof}

The second lemma is an anti-concentration bound for Haar directions 
which will be used to control how many halving steps are needed with high probability.

\begin{lemma} 
\label{lem:haar-smallball-simple}
Let $q$ be uniform on $\mathbb{S}^{d-1}$ and fix any $v\in\mathbb{S}^{d-1}$.
Then for every $\rho\in[0,1]$,
\[
\mathbb{P}\big(|\langle v,q\rangle|\le \rho\big)\ \le\ \sqrt{\frac{2d}{\pi}}\,\rho.
\]
\end{lemma}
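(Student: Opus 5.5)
By rotation invariance of the uniform distribution on $\mathbb{S}^{d-1}$, the law of $t:=\langle v,q\rangle$ does not depend on $v$. I will therefore work directly with the one-dimensional marginal density of $t$. For $d\ge 2$ this density on $[-1,1]$ is
\[
p_d(t)=c_d\,(1-t^2)^{\frac{d-3}{2}},\qquad c_d=\frac{\Gamma(\frac d2)}{\sqrt{\pi}\,\Gamma(\frac{d-1}{2})}.
\]
I will check this via the surface-area element, or equivalently by writing $q=g/\|g\|$ with $g$ standard Gaussian. The case $d=1$ is separate and trivial: $|\langle v,q\rangle|=1$ almost surely, so the probability is $0$ for $\rho<1$ and $1\le\sqrt{2/\pi}$ fails at $\rho=1$. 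I therefore expect the intended reading to be $d\ge2$, which is consistent with the rest of the section. I will note this and treat $d\ge 2$.

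For $d\ge3$ the exponent $\frac{d-3}{2}$ is nonnegative, so $p_d(t)\le c_d$. Hence $\mathbb{P}(|t|\le\rho)\le 2c_d\rho$, and it remains to show $2c_d\le\sqrt{2d/\pi}$, i.e.\ $\Gamma(\frac d2)/\Gamma(\frac{d-1}{2})\le\sqrt{d/2}$. This follows from the standard Gautschi/Wendel inequality $\Gamma(x+\frac12)/\Gamma(x)\le\sqrt{x}$ with $x=\frac{d-1}{2}$, which gives $\sqrt{(d-1)/2}\le\sqrt{d/2}$. Alternatively, log-convexity of $\Gamma$ gives $\Gamma(x+\frac12)^2\le\Gamma(x)\Gamma(x+1)=x\Gamma(x)^2$, which is the same bound and self-contained.

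For $d=2$ the density $p_2(t)=\frac{1}{\pi\sqrt{1-t^2}}$ is unbounded, so I will compute directly: $\mathbb{P}(|t|\le\rho)=\frac{2}{\pi}\arcsin\rho$. I then need $\frac{2}{\pi}\arcsin\rho\le\frac{2}{\sqrt\pi}\rho$, i.e.\ $\arcsin\rho\le\sqrt{\pi}\,\rho$ on $[0,1]$. By convexity of $\arcsin$ on $[0,1]$, $\arcsin\rho\le\frac{\pi}{2}\rho$, and $\frac{\pi}{2}\approx1.571\le\sqrt\pi\approx1.772$, so this holds.

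The main obstacle is the Gamma-ratio bound in the $d\ge3$ case; log-convexity of $\Gamma$ handles it cleanly. The $d=2$ unbounded-density case also needs care, but it is settled by the explicit $\arcsin$ formula.
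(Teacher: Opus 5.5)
Your proof is correct, but it follows a different route from the paper. The paper does no computation. It quotes the one-sided cosine-measure bound of Lemma~B.2 in \cite{gratton2015direct} with a single random direction, $\mathbb{P}(\langle v,q\rangle\ge\rho)\ge\frac12-\rho\sqrt{d/(2\pi)}$, and then uses the symmetry of $\langle v,q\rangle$ to subtract the two tails from $1$.

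You instead work from the explicit marginal density $c_d(1-t^2)^{(d-3)/2}$. For $d\ge3$ you bound it by its value at $t=0$, which reduces everything to the Gamma-ratio inequality $\Gamma(\frac d2)/\Gamma(\frac{d-1}{2})\le\sqrt{d/2}$. This is precisely the small-$\rho$ content of the cited one-sided bound, and it comes from the same Wendel inequality the paper later uses in its minimax lower bound. Since the density is unbounded for $d=2$, you handle that case with the exact formula $\frac{2}{\pi}\arcsin\rho$ and the chord bound for the convex function $\arcsin$.

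The paper's version is two lines given the citation. Yours is self-contained and gives slightly better constants: $\sqrt{2(d-1)/\pi}$ for $d\ge3$, and $1$ for $d=2$.

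Your caveat about $d=1$ is correct and worth keeping. At $\rho=1$ the left side equals $1>\sqrt{2/\pi}$, so the statement fails there. The paper's complement step writes $\mathbb{P}(|\langle v,q\rangle|\le\rho)$ as $1-\mathbb{P}(\langle v,q\rangle\ge\rho)-\mathbb{P}(\langle v,q\rangle\le-\rho)$, which tacitly uses that $\langle v,q\rangle$ has no atoms at $\pm\rho$, i.e.\ $d\ge2$. This is harmless because the lemma is only invoked in Theorem~\ref{thm:OSNE-convex-oracle} and its consequences, where $d\ge2$ is assumed.
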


This result could be derived by a corollary of the cosine-measure bound of independent random vectors uniformly distributed on the unit sphere, as stated in Lemma B.2 of \cite{gratton2015direct}.
 
\begin{proof}
The cosine measure bound in Lemma~B.2 of \cite{gratton2015direct} (letting $D=\{q\}$ and $m=1$) states that for any $\tau\in[0,\sqrt{d}]$, the cosine measure $\mathrm{cm}(D,v)$ (which essentially is $\langle v,q\rangle$) satisfies   
\[
\mathbb{P}\!\left(\langle v,q\rangle\ge \frac{\tau}{\sqrt{d}}\right)
\ \ge\ 1-\left(\frac12+\frac{\tau}{\sqrt{2\pi}}\right)
\ =\ \frac12-\frac{\tau}{\sqrt{2\pi}}.
\]
Letting $\tau=\rho\sqrt{d}$ yields $
\mathbb{P}\big(\langle v,q\rangle \ge \rho\big)\ \ge\ \frac12-\rho\sqrt{\frac{d}{2\pi}}$ for $\rho \in [0,1]$.
By symmetry $\langle v,q\rangle$ has the same distribution as $-\langle v,q\rangle$, hence
$\mathbb{P}(\langle v,q\rangle \le -\rho)=\mathbb{P}(\langle v,q\rangle \ge \rho)$.
Therefore,
\[
\mathbb{P}\big(|\langle v,q\rangle|\le \rho\big)
=1-\mathbb{P}(\langle v,q\rangle \ge \rho)-\mathbb{P}(\langle v,q\rangle \le -\rho)
\le 1-2\left(\frac12-\rho\sqrt{\frac{d}{2\pi}}\right)
=\sqrt{\frac{2d}{\pi}}\,\rho.
\]This finishes the proof.
\end{proof}

The third lemma states that once every queried direction comes with the correct sign of $\langle n_x,u\rangle$, planar bisection becomes exact
and the local-geometry-induced bias term disappears from the planar error bound (compared with Lemma \ref{lem:planar-bisect}).

\begin{lemma}
\label{lem:planar-bisect-cvx}
Assume $\calS_x$ is closed and convex, and the preference relation is plateau-free and $r_x$-regular at $x$ with $r_x>0$.
Run Algorithm~\ref{alg:planar-bisect} to depth $T$,
implementing Algorithm~\ref{alg:linesearch} before all comparisons.
In the event that Algorithm~\ref{alg:linesearch} always terminates, the returned unit vector $y$ almost surely satisfies
\begin{equation}
\label{eq:planar-bisect-cert-bound}
|\langle n_x,y\rangle|\ \le\ \frac{\pi}{2^{T+2}}.
\end{equation} 
\end{lemma}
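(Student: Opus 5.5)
The plan is to rerun the proof of Lemma~\ref{lem:planar-bisect} with the margin $\tau=h/(2r_x)$ replaced by $\tau=0$. The bisection part of that proof only needs each comparison to certify the sign of $\langle n_x,u\rangle$ correctly. The main new ingredient is therefore an exact two-sided sign certificate for every queried direction $u\in\{\bar a,\bar b,u_M\}$, obtained by combining Lemma~\ref{lem:convex-supporting-halfspace} with the termination condition of Algorithm~\ref{alg:linesearch}.

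\textbf{Step 1 (exact certificate).} Fix a queried direction $u$ and let $h$ be the current shared radius when Algorithm~\ref{alg:linesearch} returns; we are in the event that it terminates. Termination means that $x\prec x+hu$ and $x\prec x-hu$ do not both hold.
\begin{itemize}
\item If $x+hu\preccurlyeq x$, then Lemma~\ref{lem:convex-supporting-halfspace} gives $\langle n_x,u\rangle\le 0$.
\item Otherwise $x\prec x+hu$, so termination forces $x-hu\preccurlyeq x$. Lemma~\ref{lem:convex-supporting-halfspace} applied to $-u$ then gives $\langle n_x,u\rangle\ge 0$.
\end{itemize}
Thus the branch rule ``update $u_-$ if $x+hu\preccurlyeq x$, else update $u_+$'' maintains $\langle n_x,u_-\rangle\le 0\le\langle n_x,u_+\rangle$. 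Lemma~\ref{lem:convex-supporting-halfspace} holds for every $h>0$, so each certificate stays valid even though $h$ shrinks between comparisons.

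\textbf{Step 2 (bisection).} Assuming the algorithm does not return at the early-exit line, the initialization lines produce a bracket with $\angle(u_-,u_+)\le\pi/2$ and the signs from Step 1. The function $u\mapsto\langle n_x,u\rangle$ is continuous on the short arc joining $u_-$ to $u_+$, so it has a zero $u^\star$ on that arc. Each bisection step preserves the sign invariant and halves the arc. After $T$ steps the returned midpoint $y$ satisfies $\angle(y,u^\star)\le\pi/2^{T+2}$, and the chain $|\langle n_x,y\rangle|\le\|y-u^\star\|\le\angle(y,u^\star)$ from Lemma~\ref{lem:planar-bisect} gives \eqref{eq:planar-bisect-cert-bound}.

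\textbf{Step 3 (early return of $\bar a$).} This is the step I expect to be the main obstacle.
\begin{itemize}
\item If $x\pm h\bar a\preccurlyeq x$, then Step 1 applied to $\bar a$ and $-\bar a$ gives $\langle n_x,\bar a\rangle=0$, and the bound holds.
\item If $x\preccurlyeq x\pm h\bar a$, termination rules out both being strict, so at least one of $x\pm h\bar a$ ties with $x$. By plateau-freeness that point lies in $\calL_x=\partial\calS_x\subseteq\calS_x$, which yields only a one-sided bound on $\langle n_x,\bar a\rangle$.
\end{itemize}
To close the second case, I would first show that exact ties at a genuinely outward or inward direction can occur only for a $\theta$-null set of rotations. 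This is where the random rotation and the ``almost surely'' qualifier come in. The argument would use convexity of $\calS_x$: for a fixed $h$, the set $\{u : x+hu\in\partial\calS_x\}$ in the plane is the intersection of a circle with a convex boundary. I would then need to verify that it cannot contain an arc of directions with $\langle n_x,u\rangle\neq 0$ that is hit with positive probability.

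If that measure-zero argument fails, the fallback is to use the interior tangent ball $B(x-r_xn_x,r_x)$ to show directly that a tie in direction $-\bar a$ forces $\langle n_x,\bar a\rangle\ge 0$, and combine this with the tie or strict comparison on the other side.
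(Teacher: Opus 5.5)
\textbf{Verdict: genuine gap in the second bullet of your Step~3, and it cannot be closed because the claimed bound is false on that branch.}

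Your Steps~1 and~2 are correct, and they are essentially the paper's entire proof. Once Algorithm~\ref{alg:linesearch} has terminated, Lemma~\ref{lem:convex-supporting-halfspace} makes every branch decision an exact sign certificate, and the bracket then bisects exactly. The paper never addresses the early return in line~\ref{line:pb-return-a}, and you are right that this branch needs its own argument.

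Your measure-zero idea fails because $\partial\calS_x$ can contain a whole arc of the circle $\{x+hu\}$ on which $\langle n_x,u\rangle\neq 0$. Here is a counterexample.
\begin{itemize}
\item Take $d=2$, $x=0$, $K:=B(0,h_0)\cap B(-h_0e_2,h_0)$ and $c:=-\tfrac{h_0}{2}e_2$. Let $y\preccurlyeq y'$ iff $\gamma(y)\le\gamma(y')$, where $\gamma(y):=\inf\{t\ge 0:\ y\in c+t(K-c)\}$.
\item Then $\calS_0=K$ is closed and convex, and the relation is plateau-free, since its level sets are boundaries of homothets of $K$.
\item It is regular at $0$ with $n_0=e_2$ and $r_0=h_0/2$, because $B(-\tfrac{h_0}{2}e_2,\tfrac{h_0}{2})\subseteq K\subseteq\{y:\ y_2\le 0\}$.
\item For every unit $u$ with $\langle u,e_2\rangle\le-\tfrac12$, the point $h_0u$ lies in $B(-h_0e_2,h_0)$ and on the circle of radius $h_0$ about $0$. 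Hence it lies on $\partial K=\calL_0$, so $h_0u\sim 0$.
\item Meanwhile $-h_0u$ has positive second coordinate, so $0\prec -h_0u$.
\end{itemize}
Suppose the shared radius is $h_0$ on entry and $\bar a$ falls in this arc, which happens with probability $1/3$. Then Algorithm~\ref{alg:linesearch} exits without halving, the second clause of line~\ref{line:pb-return-a} fires, and the output $y=\bar a$ has $|\langle n_0,y\rangle|\ge\tfrac12>\pi/2^{T+2}$.

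Your fallback does not rescue this either. A tie at $x-h\bar a$ does give $\langle n_x,\bar a\rangle\ge 0$; this follows directly from Lemma~\ref{lem:convex-supporting-halfspace}, with no tangent ball needed. But when the opposite comparison is strict, the only upper bound available is $\langle n_x,\bar a\rangle\le h/(2r_x)$ from Lemma~\ref{lem:curv-margin-OSNE}. That bound is not small, because the line search never shrank $h$; in the example it equals $1$. Only the subcase with ties on both sides closes.

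So the statement itself needs amending, and the paper's argument has the same hole as yours. A natural fix is to let the adaptive early return fire only on the first clause, $x\pm h\bar a\preccurlyeq x$. Your first bullet shows this certifies $\langle n_x,\bar a\rangle=0$. The one-sided-tie case should instead pass on to line~\ref{line:setinitialu}. By your Step~1, the resulting bracket still satisfies $\langle n_x,u_-\rangle\le 0\le\langle n_x,u_+\rangle$, so Steps~1--2 then give a complete proof.
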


\begin{proof} 

Fix any direction $u\in\mathbb{S}^{d-1}$ queried by Algorithm~\ref{alg:planar-bisect}.
Since Algorithm~\ref{alg:linesearch} terminates, at the final radius $h$ we have at least one of $x+hu\preccurlyeq x$ or $x-hu\preccurlyeq x$.
Using the comparison outcomes already produced in the final check of the while-condition in Algorithm~\ref{alg:linesearch}, we obtain a one-sided sign certificate.
If $\langle n_x,u\rangle>0$ then Lemma~\ref{lem:convex-supporting-halfspace} implies $x+hu\notin \calS_x$ and hence $x\prec x+hu$.
If $\langle n_x,u\rangle<0$, then Lemma~\ref{lem:convex-supporting-halfspace} applied to $-u$ implies
$x-hu\notin\calS_x$, hence $x\prec x-hu$.
Since the line-search loop has terminated, by completeness, $x+hu\preccurlyeq x$.

Consequently, Algorithm~\ref{alg:planar-bisect} performs an exact bisection on the sign of the continuous function
$u\mapsto \langle n_x,u\rangle$ restricted to the unit circle in the plane $\mathrm{span}\{a,b\}$.
If $\mathrm{proj}_{\mathrm{span}\{a,b\}}(n_x)=0$ (although later we will prove it is with zero probability), then $\langle n_x,y\rangle=0$ for every returned $y$ and the claim is trivial.
Otherwise, there exists a tangential direction $u^\star\in \mathrm{span}\{a,b\}\cap\mathbb{S}^{d-1}$ with
$\langle n_x,u^\star\rangle=0$.

The initialization step using the orthonormal pair $(\bar a,\bar b)$ produces a bracket $(u_-,u_+)$ lying in a quadrant, hence
$\angle(u_-,u_+)\le \pi/2$, and let $u^\star$ lie on the shorter arc between them. Each bisection step replaces an endpoint by the geodesic midpoint, halving the bracket arc length, while $u^\star$. After $T$ bisections, the final bracket length is at most $\frac{\pi}{2^{T+1}}$, so the returned midpoint $y$ satisfies
$\angle(y,u^\star)\le \frac{\pi}{2^{T+2}}$.
Finally,
\[
|\langle n_x,y\rangle|
=
|\langle n_x,y-u^\star\rangle|
\le
\|y-u^\star\|
\le
\angle(y,u^\star)
\le
\frac{\pi}{2^{T+2}},
\]
which is \eqref{eq:planar-bisect-cert-bound}.
\end{proof}

With Lemmas \ref{lem:linesearch}, \ref{lem:haar-smallball-simple} and \ref{lem:planar-bisect-cvx}, now we are ready to state the main complexity bound result for normal direction estimation with adaptive comparison radius.

\begin{theorem} [Adaptive normal direction estimation]
\label{thm:OSNE-convex-oracle}
Assume $d\ge2$ and fix $x\in\X$.
Suppose the preference relation is plateau-free and $r_x$-regular at $x$ with regularity radius $r_x>0$, and assume moreover that $\calS_x$ is convex.  
Fix $\epsilon\in(0,\sqrt{2}]$ and define the bisection depth
\begin{equation}
\label{eq:T-epsilon}
T_\epsilon
\;:=\;
\left\lceil \log_2\!\Big(\frac{\pi\sqrt{d-1}}{2\epsilon}\Big)\right\rceil.
\end{equation}
Run the outer construction of Algorithm~\ref{alg:OSNE} with depth $T_\epsilon$, initializing the shared radius to $h_0$. In the regime with adaptive comparison radius, the algorithm terminates almost surely and returns $\widehat n$ satisfying
  $\|\widehat n-n_x\|\le \epsilon$.
  Moreover, for every $\delta\in(0,1)$, with probability at least $1-\delta$,
    \begin{equation}
  \label{eq:Nhpp}
  N_\epsilon
  \le 
  2(d-1)\left\lceil \log_2\!\left(\frac{7\sqrt{d-1}}{\epsilon}\right)\right\rceil+ 
  2\left\lceil \log_2\!\left(\frac{h_0\,d^3}{r_x \epsilon}\right)\right\rceil_{+} + 16
  \;+\;4\left\lceil \log_2\!\left(\frac{1}{\delta}\right)\right\rceil.
  \end{equation}
\end{theorem}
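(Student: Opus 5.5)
The proof has three parts: accuracy, almost-sure termination, and a high-probability count of comparisons.

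\emph{Accuracy.} The argument mirrors the proof of Theorem~\ref{thm:OSNE}, with Lemma~\ref{lem:planar-bisect} replaced by Lemma~\ref{lem:planar-bisect-cvx}. On the event that every line-search call terminates, each $y_i$ satisfies $|\langle n_x,y_i\rangle|\le \gamma:=\pi/2^{T_\epsilon+2}$. By Lemma~\ref{lem:OSNE-invariants} we can write $n_x=\alpha_0 v_d+\sum_i\beta_i y_i$ with $\sum_i\beta_i^2\le (d-1)\gamma^2\le \epsilon^2/16$; the choice of $T_\epsilon$ gives $\sqrt{d-1}\,\gamma\le \epsilon/8$.

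For the orientation step, the comparison at $v_d$ is preceded by a line search. Lemma~\ref{lem:convex-supporting-halfspace} then gives an exact certificate: if $\alpha_0>0$ then $x\prec x+hv_d$, and if $\alpha_0<0$ the terminated loop together with completeness forces $x+hv_d\preccurlyeq x$. In the latter case I would need to check that this is actually strict, $x+hv_d\prec x$; I expect this follows from plateau-freeness plus regularity once $h$ is small, or else the final line can be read with $\preccurlyeq$. So $\widehat n=\sign(\alpha_0)v_d$, and $\|n_x-\widehat n\|^2\le 2\sum_i\beta_i^2\le\epsilon^2$.

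\emph{Almost-sure termination and the halving count.} By Lemma~\ref{lem:linesearch}, a call at direction $u$ stops after at most $\lceil\log_2(h/(r_x|\langle n_x,u\rangle|))\rceil_+$ halvings whenever $\langle n_x,u\rangle\ne0$. The radius $h$ is shared and only decreases, so the total number of halvings over the whole run is at most $\max_u\lceil\log_2(h_0/(r_x|\langle n_x,u\rangle|))\rceil_+$, with the maximum over all queried directions $u$. The problem is that the bisection midpoints $u_M$ are deterministic functions of earlier outcomes, and one of them can be nearly tangential. I would handle this in two ways.

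First, the only directions tested with both $\pm u$ (the only ones where the while-loop can run) are $\bar a$ in each planar call and $v_d$. At $\bar b$ and at the midpoints the loop condition still requires $x\prec x\pm hu$; if it fails, there are no halvings.

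Second, in any case a single successful halving sequence is enough to certify all later directions, because the exact certificates of Lemma~\ref{lem:convex-supporting-halfspace} are available whenever the loop stops, and the loop stops whenever $x+hu\preccurlyeq x$ or $x-hu\preccurlyeq x$. So only the halvings actually performed need bounding. They occur only while both $\pm u$ lie strictly outside $\calS_x$, which by regularity requires $h>r_x|\langle n_x,u\rangle|$ (Lemma~\ref{lem:linesearch}).

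The randomized directions are $\bar a$ in each plane: the random angle $\theta$ makes $\langle n_x,\bar a\rangle$ equal to $\|P n_x\|\cos(\theta-\phi)$, where $P$ is the projection onto the current plane. The last direction $v_d$ has $|\langle n_x,v_d\rangle|\ge\sqrt3/2$ by the accuracy part, so it is harmless.

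\emph{Main obstacle: the high-probability bound.} The hard step is lower-bounding $\|Pn_x\|$ for the planes $\mathrm{span}\{v_i,q_{i+1}\}$ and then applying anticoncentration. Since $q_{i+1}$ is Haar-distributed given the past (conditioning on $q_1,\dots,q_i$ leaves it uniform on their orthogonal complement), I would use $\|Pn_x\|\ge|\langle n_x,q_{i+1}\rangle|$ and Lemma~\ref{lem:haar-smallball-simple}. Each of the $d$ events $|\langle n_x,q_{i+1}\rangle|\le\rho$ and $|\cos(\theta-\phi)|\le\rho'$ then gets probability $O(\sqrt d\,\rho)$ and $O(\rho')$ respectively. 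A union bound over $d$ planes with $\rho\sim\delta/d^{3/2}$ gives $\log_2(h_0 d^3/(r_x\epsilon\delta))$-type halvings, each costing two comparisons. To get the additive $4\lceil\log_2(1/\delta)\rceil$ form instead of a multiplicative $\delta$ inside the log, I would separate a fixed constant-probability budget into the $d^3/\epsilon$ term and the constant $16$, and treat the excess halvings geometrically: the probability of needing $k$ extra halvings decays like $2^{-k/2}$, which yields the $4\log_2(1/\delta)$ term.

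Finally, I add the base cost: each planar call uses $T_\epsilon+3$ comparisons, which doubles to at most $2(T_\epsilon+\dots)$ because each line-search check queries both $\pm u$. This gives $2(d-1)\lceil\log_2(7\sqrt{d-1}/\epsilon)\rceil$, using $\log_2(\pi/2)+2<\log_2 7$, plus the final orientation comparisons.
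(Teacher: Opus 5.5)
\textbf{Gap: the halving count.} You bound $|\langle n_x,u\rangle|$ only at the randomized direction $\bar a$ (and at $v_d$). Your justification is that either the while-loop of Algorithm~\ref{alg:linesearch} can only run there, or that one terminated line search certifies all later directions. Neither holds.

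In the adaptive regime, Algorithm~\ref{alg:linesearch} is invoked before \emph{every} comparison, including at $\bar b$ and at every bisection midpoint $u_M$, and each invocation tests both $x\pm hu$. This is exactly what Lemma~\ref{lem:planar-bisect-cvx} needs. If $u_M$ were compared only at a radius certified at $\bar a$, the outcome $x\prec x+hu_M$ would carry only the margin-$h/(2r_x)$ certificate of Lemma~\ref{lem:curv-margin-OSNE}, and your accuracy argument would fail.

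Termination at one direction also does not carry over to a later one. If $\calS_x=B(x-r_xn_x,r_x)$, then for any $u'$ with $0<|\langle n_x,u'\rangle|<h/(2r_x)$, both $x\pm hu'$ lie strictly outside $\calS_x$, so the loop halves again. The midpoints home in on the in-plane tangent direction. Hence $|\langle n_x,u_M\rangle|$ is at most $O(\|P_in_x\|2^{-T_\epsilon})$, with $P_i$ the projection onto the $i$-th plane, and depending on where the tangent falls it can be arbitrarily close to $0$.

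This is precisely the source of the $1/\epsilon$ inside $\log_2(h_0d^3/(r_x\epsilon))$. An argument that randomizes only $\bar a$ cannot produce it. It establishes neither almost-sure termination (for a ball, an exactly tangential midpoint would halve forever) nor the tail bound.

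\textbf{The missing idea.} Every direction queried in the $i$-th planar call ($\pm\bar a_i$, $\bar b_i$, and all dyadic midpoints up to depth $T$) lies on the grid $\mathcal G_i$ of $2^{T+2}$ unit vectors at angles $\theta+k\pi/2^{T+1}$. This grid is a uniformly random rotation of a fixed grid. Hence, for all such $u$,
$|\langle n_x,u\rangle|\ge\|P_in_x\|\sin\Delta_i\ge|\langle n_x,q_{i+1}\rangle|\tfrac{2}{\pi}\Delta_i$,
where $\Delta_i$ is the angular distance from the tangent line $\{\pm u_i^\star\}$ to the grid. Uniformity of $\theta$ gives $\mathbb P(\Delta_i\le\beta\pi/2^{T+1})\le 2\beta$.

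Union bounds over $i$ then finish the argument; Lemma~\ref{lem:haar-smallball-simple} applies to each $q_{i+1}$ because it is marginally uniform, which is what you need rather than its conditional law. With probability at least $1-\delta$ this gives $\min_u|\langle n_x,u\rangle|\gtrsim\delta^2 2^{-T}/d^{5/2}$, hence $H\le\lceil\log_2(Ch_0d^3/(r_x\epsilon\delta^2))\rceil_+$. The additive $4\lceil\log_2(1/\delta)\rceil$ then comes from splitting off $\delta^2$ with a ceiling inequality, so no separate geometric-tail argument is needed.

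\textbf{Two smaller points.}
\begin{itemize}
\item The bound $2^{T_\epsilon}\ge\pi\sqrt{d-1}/(2\epsilon)$ gives $\sqrt{d-1}\,\gamma\le\epsilon/2$, not $\epsilon/8$. So you get $\|n_x-\widehat n\|\le\epsilon/\sqrt2$ and $|\alpha_0|\ge 1/\sqrt2$, both of which still suffice.
\item Your concern about a tie at the orientation step is legitimate. With a tie the loop at $v_d$ does not halve, so $h$ need not be small and regularity does not force strictness. Reading the final test as $x+hv_d\preccurlyeq x$ is the right fix; the paper's proof simply asserts the sign choice here.
\end{itemize}
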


\begin{proof}
Let $T$ denote $T_\epsilon$ in this proof. Recall that the comparison radius $h$ is shared globally, initialized at $h_0$, and is updated only by Algorithm~\ref{alg:linesearch}.

We first prove the accuracy guarantee $\|\widehat n-n_x\| \le \epsilon$.  
By Lemma~\ref{lem:OSNE-invariants}, the vectors $\{y_1,\dots,y_{d-1},v_d\}$ form an orthonormal basis. 
Write
\[
n_x=\alpha_0\,v_d+\sum_{i=1}^{d-1}\beta_i\,y_i,
\qquad\text{where}\quad
\beta_i:=\langle n_x,y_i\rangle.
\]
Lemma~\ref{lem:planar-bisect-cvx} implies that each call of Algorithm~\ref{alg:planar-bisect} returns a unit vector $y_i$ satisfying $|\langle n_x,y_i\rangle| \le \frac{\pi}{2^{T+2}}$, so
 $\sqrt{\sum_{i=1}^{d-1}\beta_i^2} \le \sqrt{d-1}\cdot \frac{\pi}{2^{T+2}}.$
The last step of Algorithm~\ref{alg:OSNE} chooses $\widehat n\in\{\pm v_d\}$ so that $\langle n_x,\widehat n\rangle=|\alpha_0|$.
Therefore,
\[
\|n_x-\widehat n\|
=
\sqrt{(1-|\alpha_0|)^2+\|\beta\|^2}
\le
\sqrt{2}\|\beta\|\le \sqrt{2d-2}\cdot \frac{\pi}{2^{T+2}} \ .
\] 
By \eqref{eq:T-epsilon}, $2^T\ge \frac{\pi\sqrt{d-1}}{2\epsilon}$, so the right-hand side is at most $\epsilon$. Furthermore, note that as $\epsilon \in(0,\sqrt{2}]$ and $d \ge 2$, $\left\lceil \log_2\!\left(\tfrac{\pi\sqrt{d-1}}{2\epsilon}\right)\right\rceil \ge \left\lceil \log_2\!\left(\tfrac{\pi}{2\sqrt{2}}\right)\right\rceil \ge 1$, so $T_\epsilon$ defined in \eqref{eq:T-epsilon} is in $\mathbb{N}_{++}$, and the bisection depth is valid.
This proves that $\|n_x-\hat{n}\| \le \epsilon$.

With the accuracy guarantee established, the rest of the proof counts the number of comparisons.
Algorithm~\ref{alg:OSNE} makes $d-1$ calls of Algorithm~\ref{alg:planar-bisect} and one final orientation comparison, and each call of Algorithm~\ref{alg:planar-bisect} uses at most $T_\epsilon+3$ comparisons (in lines \ref{line:pb-compare-a-line-search}, \ref{line:pb-compare-b-line-search} and \ref{line:pb-compare-um-line-search}). Note that these comparisons can be directly read off from the final while-condition evaluation of Algorithm~\ref{alg:linesearch} without additional comparisons. 
Therefore, later we consider only the number of calls to Algorithm~\ref{alg:linesearch} and the number of halvings inside it. 
For each call of Algorithm~\ref{alg:planar-bisect}, there are at most $T_\epsilon+2$ calls of Algorithm~\ref{alg:linesearch}. Overall, at most $Q:=(d-1)(T_\epsilon+2)+1$ calls of Algorithm~\ref{alg:linesearch} are used in Algorithm~\ref{alg:OSNE}.

Then let $H$ be the total number of assignments $h\gets h/2$ made across all calls.
A call with $H_{\mathrm{call}}$ halvings checks the condition $H_{\mathrm{call}}+1$ times, making $2H_{\mathrm{call}}+2$ comparisons.
Summing over calls gives
\begin{equation}
\label{eq:N=2(Q+H)}
N_\epsilon \;\le \; 2Q+2H = 2(d-1)(T_\epsilon+2)+2 + 2H.
\end{equation}
It remains to control $H$.
 
Let $\mathcal{U}$ be the set of all directions $u$ ever passed to Algorithm~\ref{alg:linesearch}.
Lemma~\ref{lem:linesearch} implies that  
\begin{equation}
\label{eq:H-vs-eta-min}
H\ \le\ \left\lceil \log_2\!\Big(\frac{ h_0}{r_x\cdot \min_{u\in\mathcal{U}}|\langle n_x,u\rangle|}\Big)\right\rceil_{+}.
\end{equation}
Therefore, in order to bound $H$, we need to first lower bound $\min_{u\in\mathcal{U}}|\langle n_x,u\rangle|$ with high probability.

Fix the $i$-th call of Algorithm \ref{alg:planar-bisect}, the corresponding plane is $\mathrm{span}\{v_i,q_{i+1}\}$.
After the random rotation in line~2 of Algorithm~\ref{alg:planar-bisect}, the algorithm uses an orthonormal basis, denoted by $(\bar a_i,\bar b_i)$, of this plane. Consider the equally spaced grid on the unit circle in this plane:
\begin{equation}\label{eq:calGi}
\mathcal{G}_i
:=
\left\{
\cos\left(\tfrac{k\pi}{2^{T+1}}\right)\,\bar a_i+\sin\left(\tfrac{k\pi}{2^{T+1}}\right)\,\bar b_i
:\ k=0,1,\dots,2^{T+2}-1
\right\}.
\end{equation}
Up to depth $T$, Algorithm~\ref{alg:planar-bisect} only queries endpoints and dyadic midpoints of a quadrant arc, hence every direction queried in Algorithm \ref{alg:planar-bisect} lies in $\mathcal{G}_i$. Therefore
\begin{equation}\label{eq:eta-min-grid0}
\min_{u\in \mathcal{U}}|\langle n_x,u\rangle|
\ \ge\
\min_{i=1,\dots,d-1}\min_{u\in\mathcal{G}_i}|\langle n_x,u\rangle|.
\end{equation}
Next we will derive the lower bound for each $ \min_{u\in\mathcal{G}_i}|\langle n_x,u\rangle|$.

Let $\hat{u}_i$ denote $\mathrm{proj}_{\mathrm{span}\{v_i,q_{i+1}\}}(n_x)$ and let $u_i^\star$ be any unit direction in $\mathrm{span}\{v_i,q_{i+1}\}$ satisfying $\langle n_x,u_i^\star\rangle=0$
(equivalently, $u_i^\star$ is orthogonal to $\hat{u}_i$).
Define the smallest angular separation between the line of $u_i^\star$ and the grid endpoints as follows:
\begin{equation}\label{eq:Deltai}
    \Delta_i
:=
\min_{g\in\mathcal{G}_i}\min\{\angle(g,u_i^\star),\ \angle(g,-u_i^\star)\}. 
\end{equation}
Then for any queried direction $u\in\mathcal{G}_i$,
\begin{equation}\label{eq:eta-lower-bound}
\begin{aligned}
    & 
|\langle n_x,u\rangle| = |\langle \hat{u}_i,u\rangle + \langle n_x - \hat{u}_i,u\rangle| =  |\langle \hat{u}_i,u\rangle| = \|\hat{u}_i\| \cdot \|u\| \cdot |\cos(\angle(\hat{u}_i,u))| \\
= \ &  \|\hat{u}_i\| \cdot \|u\| \cdot |\sin(\angle(u_i^\star,u))| \ge  \|\hat{u}_i\| \cdot \|u\| \cdot \sin(\Delta_i)  = \|\hat{u}_i\|  \cdot \sin(\Delta_i) 
\end{aligned}
\end{equation}
where the second equality is because $n_x - \hat{u}_i$ is orthogonal to the plane of $u$, the fourth equality is because $\hat{u}_i$, $u_i^\star$ and $u$ are in the same plane and $\langle u_i^\star, \hat{u}_i \rangle = 0$, and the inequality is because of the definition of $\Delta_i$ in \eqref{eq:Deltai}. 

Furthermore, since  $\Delta_i\le \pi/2$, $ \sin(\Delta_i) \ge \frac{2}{\pi}\Delta_i$. In addition, since $v_i$ and $q_{i+1}$ are orthonormal, $\hat{u}_i = \mathrm{proj}_{\mathrm{span}\{v_i,q_{i+1}\}}(n_x) = \langle n_x,v_i\rangle v_i + \langle n_x,q_{i+1}\rangle q_{i+1}$, we have $\|\hat{u}_i\| \ge |\langle n_x,q_{i+1}\rangle|$. Therefore,  substituting these two lower bounds back to \eqref{eq:eta-lower-bound} gives a lower bound for each $ \min_{u\in\mathcal{G}_i}|\langle n_x,u\rangle|$:
\begin{equation}\label{eq:eta-lower-bound2} 
\min_{u\in\mathcal{G}_i}|\langle n_x,u\rangle|  \ge |\langle n_x,q_{i+1}\rangle|\cdot \frac{2}{\pi}\Delta_i
\end{equation}
Finally, using \eqref{eq:eta-min-grid0} and take the minimum over $i=1,\dots,d-1$ yields
\begin{equation}
\label{eq:eta-min-grid}
\min_{u\in\mathcal{U}}|\langle n_x,u\rangle|
\ \ge\
\frac{2}{\pi}\,
\underbrace{\Big(\min_{2\le j\le d}|\langle n_x,q_j\rangle|\Big)}_{=:A}\,
\underbrace{\Big(\min_{1\le i\le d-1}\Delta_i\Big)}_{=:B}.
\end{equation}
Then we use $A$ and $B$ to denote the two product terms in the right-hand side of \eqref{eq:eta-min-grid} for brevity.

To further lower bound $\min_{u\in\mathcal{U}}|\langle n_x,u\rangle|$, we now bound $A$ and $B$ from below with high probability.
By Lemma~\ref{lem:haar-smallball-simple} and a union bound,
\begin{equation}\label{eq:boundA}
\mathbb{P}(A\le \alpha)\ \le\ (d-1)\sqrt{\frac{2d}{\pi}}\,\alpha.
\end{equation}
For $B$, fix $i$ and condition on the input orthonormal vectors $v_i$ and $q_{i+1}$, then the random in-plane rotation is controlled by the random variable $\theta\sim \mathrm{Unif}[0,2\pi)$.
That rotation makes $\mathcal{G}_i$ a uniform random rotation of a fixed equally spaced grid of spacing $\frac{\pi}{2^{T+1}}$.
Equivalently, the position of $u_i^\star$ (and $-u_i^\star$) inside its nearest length-$\frac{\pi}{2^{T+1}}$ interval is uniform on $[0,\frac{\pi}{2^{T+1}})$.
Hence for any $\beta\in[0,\tfrac12]$,
\begin{equation}\label{eq:boundB}
     \mathbb{P}\left(B\le \beta \cdot \frac{\pi}{2^{T+1}}\right) \le 
     \sum_{i=1}^{d-1}\mathbb{P}\left(\Delta_i\le \beta \cdot \frac{\pi}{2^{T+1}}\right) \le \sum_{i=1}^{d-1} 2\beta = 2(d-1)\beta.
\end{equation}
With \eqref{eq:boundA} and \eqref{eq:boundB}, choose $
\alpha:=\frac{\delta}{2(d-1)}\sqrt{\frac{\pi}{2d}}$ and $\beta:=\frac{\delta}{4(d-1)}$, then by a union bound $\mathbb{P}(A\ge \alpha,\ B\ge \beta \cdot \tfrac{\pi}{2^{T+1}})\ge 1-\delta$, and on this event \eqref{eq:eta-min-grid} gives
\[
\min_{u\in\mathcal{U}}|\langle n_x,u\rangle|
\ \ge\
\frac{2}{\pi}\cdot \frac{\delta^2}{8(d-1)^2}\sqrt{\frac{\pi}{2d}}\cdot \frac{\pi}{2^{T+1}}
\]

Plugging this into \eqref{eq:H-vs-eta-min} and using $2^T\le \frac{\pi\sqrt{d-1}}{\epsilon}$ and $(d-1)^{5/2}\sqrt{d}\le d^3$ (for $d\ge 2$),
we obtain
\begin{equation}
\label{eq:H-hpp-final}
H
\ \le\
\left\lceil
\log_2\!\Big(
\frac{32\sqrt{2\pi}\, h_0\,d^3}{r_x\,\epsilon\,\delta^2}
\Big)
\right\rceil_{+}
\end{equation} with probability at least $1-\delta$. 

Finally, substituting \eqref{eq:H-hpp-final} and $T_\epsilon
= \left\lceil \log_2\!\left(\frac{\pi\sqrt{d-1}}{2\epsilon}\right)\right\rceil$ (by \eqref{eq:T-epsilon}) into \eqref{eq:N=2(Q+H)}, we have
\begin{equation}
\label{eq:N-hpp-final}
\begin{aligned}
    N_\epsilon  & \le  2(d-1)\left(\left\lceil \log_2\!\left(\frac{\pi\sqrt{d-1}}{2\epsilon}\right)\right\rceil+2\right)+2 + 2
\left\lceil
\log_2\!\Big(
\frac{32\sqrt{2\pi}\, h_0\,d^3}{r_x\, \epsilon\,\delta^2}
\Big) 
\right\rceil_{+} \\
& \le   2(d-1)\left\lceil \log_2\!\left(\frac{7\sqrt{d-1}}{\epsilon}\right)\right\rceil+16 + 2
\left\lceil
\log_2\!\Big(
\frac{h_0\,d^3}{r_x\,\epsilon\,\delta^2}
\Big) 
\right\rceil_{+} 
\end{aligned}
\end{equation}
where the second inequality is because $4\cdot \tfrac{\pi}{2}\le 7$ and $\log_2(32\sqrt{2\pi})\approx 6.33 \le 7$.
  Using the ceiling inequality $
  \left\lceil \log_2\!\Big(\frac{C}{\delta^2}\Big)\right\rceil_{+}
  \le
  \left\lceil \log_2(C)\right\rceil_{+}
  +2\left\lceil \log_2\!\Big(\frac{1}{\delta}\Big)\right\rceil$,
  with $C=\frac{h_0\,d^3}{r_x\,\epsilon}$, 
  we obtain \eqref{eq:Nhpp}. 
\end{proof}

Theorem~\ref{thm:OSNE-convex-oracle} shows that the estimator is adaptive to the unknown local geometry.
For any target accuracy $\epsilon$, the procedure does not require any prior knowledge of the regularity radius $r_x$ or the curvature $\kappa_x$.
Instead, it starts from an arbitrary initial comparison radius $h_0$ and automatically shrinks the radius until the comparisons become informative at the scale required by the local geometry at $x$. 

The dependence in the complexity bound \eqref{eq:Nhpp} is also transparent. A larger initial $h_0$ may cause more line-search halving steps.
Likewise, a smaller regularity radius $r_x$ corresponds to sharper local geometry and hence requires more comparisons.
The good news is that all of these dependencies on $h_0$ and $r_x$ are only logarithmic.

The randomness comes from the internal randomness of the estimator in generating the Haar directions and uniform distribution. 
Its effect is also logarithmic through the term $4\left\lceil \log_2(1/\delta)\right\rceil$, which means the total number of comparisons has an exponentially decaying tail and a finite expectation of the same order as the bound in \eqref{eq:Nhpp}. 

When the dimension $d$ is large, the leading contribution in \eqref{eq:Nhpp} is the term $2(d-1)\left\lceil \log_2\!\left(\frac{7\sqrt{d-1}}{\epsilon}\right)\right\rceil$. 
In this case, the adaptive estimator is about a factor of two more expensive than the fixed-comparison-radius estimator, whose complexity is given by \eqref{eq:OSNE-comparisons-bound} in Corollary \ref{cor:OSNE-epsilon}.
This extra factor is the price of adaptivity: during line search, each check requires comparisons in both the $+u$ and $-u$ directions, whereas the fixed-radius scheme does not need two-sided comparisons.

Finally, it should also be noted that the case $d=1$ should be separately analyzed. In this case Algorithm~\ref{alg:planar-bisect} is never used.  See the proposition below. 

\begin{proposition}
\label{prop:OSNE-1d-convex}
Assume $d=1$, in the same setting as Theorem~\ref{thm:OSNE-convex-oracle}, the output $\widehat n$ is exactly $n_x$, and the algorithm terminates after at most $2+2\bigl\lceil \log_2(h_0/r_x)\bigr\rceil_{+}$
comparisons.
\end{proposition}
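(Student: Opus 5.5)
When $d=1$ the proof is a direct trace of Algorithm~\ref{alg:OSNE}. The for-loop over $i=1,\dots,d-1$ is empty, so Algorithm~\ref{alg:planar-bisect} is never called. The only work is one call of Algorithm~\ref{alg:linesearch} with direction $u=v_1=q_1\in\mathbb S^0=\{\pm e\}$, followed by the orientation step. Since $n_x\in\{\pm e\}$, we have $|\langle n_x,v_1\rangle|=1\neq 0$, so Lemma~\ref{lem:linesearch} applies with $h_{\mathrm{in}}=h_0$. It shows that the while-loop terminates after at most $\lceil\log_2(h_0/r_x)\rceil_+$ halvings.

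For the count, I follow the accounting in the proof of Theorem~\ref{thm:OSNE-convex-oracle}. A call with $H$ halvings evaluates the while-condition $H+1$ times, and each evaluation uses two comparisons ($x$ versus $x+hu$ and $x$ versus $x-hu$). The final orientation comparison of $x$ with $x+hv_1$ is read off from the last while-check, so it costs nothing extra. This gives at most $2(H+1)\le 2+2\lceil\log_2(h_0/r_x)\rceil_+$ comparisons.

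For exactness, let $h$ be the final radius. Termination means that not both $x\prec x+hv_1$ and $x\prec x-hv_1$ hold. I split into two cases using Lemma~\ref{lem:convex-supporting-halfspace}.
\begin{itemize}
\item If $n_x=v_1$, then $\langle n_x,v_1\rangle=1>0$. By Lemma~\ref{lem:convex-supporting-halfspace}, $x+hv_1\notin\calS_x$, so $x\prec x+hv_1$ and in particular $x+hv_1\prec x$ fails. The algorithm therefore outputs $v_1=n_x$.
\item If $n_x=-v_1$, then Lemma~\ref{lem:convex-supporting-halfspace} applied to $-v_1$ gives $x-hv_1\notin\calS_x$, so $x\prec x-hv_1$. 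Termination then forces $x\prec x+hv_1$ to fail, and completeness gives $x+hv_1\preccurlyeq x$.
\end{itemize}

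The main obstacle is the second case. The output rule uses strict preference, $x+hv_1\prec x$, so I still have to exclude the tie $x+hv_1\sim x$. Plateau-freeness gives $\calL_x=\partial\calS_x$, so a tie means $x+hv_1$ lies on the boundary of the convex set $\calS_x$. I then use the interior tangent ball $B(x+r_xv_1,r_x)\subseteq\calS_x$. It is a segment of $\calS_x$ on the line through $x$, and for $h<2r_x$ the point $x+hv_1$ lies in its interior, hence in $\operatorname{int}\calS_x$, which contradicts a tie.

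The remaining difficulty is to show that the final radius satisfies $h<2r_x$. The bound $h\le r_x$ follows from Lemma~\ref{lem:linesearch}'s stopping radius $h_\star\le r_x|\langle n_x,u\rangle|=r_x$ only when halvings actually occurred. If instead $h=h_0\ge 2r_x$ with no halving, a boundary tie is conceivable in principle. To close this, I would use that $d=1$, convexity, and plateau-freeness make $\calS_x$ a closed interval whose boundary consists of at most two points. With $x$ as one endpoint and $\calS_x$ extending in direction $v_1$, a tie at $x+hv_1$ would make that point the other endpoint. If this cannot be excluded, I would fall back on reading the orientation test as ``$x+hv_1\preccurlyeq x$'', which the line search certifies. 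I expect the paper treats this consistently with Proposition~\ref{prop:OSNE-1d}.
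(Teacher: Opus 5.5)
\textbf{Verdict: genuine gap.} Your comparison count and your case $n_x=v_1$ are correct and agree with the paper. The problem is the case $n_x=-v_1$. It cannot be closed along your route, because the tie $x+hv_1\sim x$ really does occur.

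First, a misreading of Lemma~\ref{lem:linesearch}. It caps the number of halvings, so it bounds the final radius from \emph{below} ($h\ge h_\star$), not from above. Your claim that halvings force $h\le r_x$ is therefore wrong.

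In fact, take $d=1$, $x\notin\X^\star$ and $r_x<\infty$. Convexity, plateau-freeness and $x\in\partial\calS_x$ make $\calS_x$ exactly the closed segment from $x$ to $x-2r_xn_x$, since the regularity radius is half its length. Its far endpoint lies in $\calL_x$. When $v_1=-n_x$, the while-condition holds exactly when $h>2r_x$. So the loop stops at the first radius $h_0/2^i\le 2r_x$, and this equals $2r_x$ whenever $h_0=2^{j+1}r_x$ for an integer $j\ge 0$.

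A concrete instance: take $\X=\R$, the preference realized by $f(y)=|y-r|$ with $r>0$, and $x=0$. Then $\calS_x=[0,2r]$, $r_x=r$ and $n_x=-1$. Suppose $q_1=+1$ and $h_0=4r$. One halving gives $h=2r$, and the comparison of $0$ with $2r$ is a tie. The rule of Algorithm~\ref{alg:OSNE} (``if $x+hv_1\prec x$ return $-v_1$, else $v_1$'') then outputs $+1=-n_x$. This happens with probability $1/2$ over $q_1$.

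So with the strict test as written, the exactness claim fails in these edge cases, and no argument can show $h<2r_x$. The paper's proof resolves this case exactly through your fallback. It argues: if $x+hv_1\preccurlyeq x$ then $n_x=-v_1$ by Lemma~\ref{lem:convex-supporting-halfspace}; otherwise termination gives $x-hv_1\preccurlyeq x$, hence $n_x=v_1$; so $\widehat n=n_x$.

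In other words, the paper implicitly takes the adaptive orientation test to be $x+hv_1\preccurlyeq x$. The final while-check certifies this test: exactly one of $x\pm hv_1\preccurlyeq x$ holds, since both together would force $\langle n_x,v_1\rangle=0$.

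You should commit to that non-strict reading, or else add the hypothesis $h_0\notin\{2^{j+1}r_x:\ j\ge 0\}$, instead of trying to exclude the tie. With either fix your argument is complete and coincides with the paper's. This tie is also why the fixed-radius Proposition~\ref{prop:OSNE-1d} needs the strict condition $h<2r_x$.
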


\begin{proof}
In one dimension, $v_1\in\mathbb S^0$ and $n_x\in\{\pm v_1\}$, so $\langle n_x,v_1\rangle\in\{\pm 1\}$ and in particular
$|\langle n_x,v_1\rangle|=1$. 
Since the while-condition of Algorithm~\ref{alg:linesearch} is false at termination either $x+h v_1\preccurlyeq x $ or $x-h v_1\preccurlyeq x$.

If $x+h v_1\preccurlyeq x$, then by Lemma~\ref{lem:convex-supporting-halfspace} we have
$\langle n_x,v_1\rangle\le 0$, which forces $\langle n_x,v_1\rangle=-1$,
i.e.\ $n_x=-v_1$,
Otherwise $x+h v_1\not\preccurlyeq x$, so necessarily $x-h v_1\preccurlyeq x$.
Applying Lemma~\ref{lem:convex-supporting-halfspace} to $-v_1$ yields
$\langle n_x,-v_1\rangle\le 0$, i.e.\ $\langle n_x,v_1\rangle\ge 0$.
Thus $\langle n_x,v_1\rangle=1$ and hence $n_x=v_1$.
Therefore $\widehat n=n_x$.

For the comparison count, Lemma~\ref{lem:linesearch} gives that a single call to Algorithm~\ref{alg:linesearch} performs at most
$\left\lceil \log_2(h_0/(r_x|\langle n_x,v_1\rangle|))\right\rceil_{+}
=
\left\lceil \log_2(h_0/r_x)\right\rceil_{+}$
radius halvings.
Each while-check uses two comparisons, so the total number of comparisons is at most $2+2\lceil \log_2(h_0/r_x)\rceil_{+}$.
\end{proof}

\subsection{Lower bound of normal direction estimation methods}
\label{sec:normal-lower-bound}

This subsection shows that for the task of estimating the normal direction, no algorithm can guarantee worst-case error $\epsilon$ within $o(d\cdot\log(1/\epsilon))$ comparisons without additional assumptions. 
Consequently, both normal direction estimation methods (with fixed or adaptive comparison radii) proposed in this section are nearly optimal.

We establish the bound on preference relations realized by linear functions $\langle c, x\rangle$ for $c \in \mathbb{S}^{d-1}$. They are plateau-free, convex and regular at all $x\in\X$, and $c$ is the normal direction. Theorem \ref{thm:minimax} below shows that in this class of relations, there always exist two different normal directions that are hard to distinguish with a limited number of comparisons.

\begin{definition}[Algorithms with \(t\) comparisons]
\label{def:adaptive-comparison-algorithm}
An algorithm with \(t\) comparisons proceeds sequentially and has access only to its past comparison history and its own private randomness.
We represent all private randomness by a random seed $\xi$ sampled before the run.
At round $i=1,\dots,t$, after observing the previous comparison history
\[
\mathcal T_{i-1}:=\bigl((p_1,q_1,\sigma_1),\dots,(p_{i-1},q_{i-1},\sigma_{i-1})\bigr),
\]
where $\sigma_j\in\{-1,0,+1\}$ is the comparison oracle response to the pair $(p_j,q_j)$, the algorithm chooses the next pair
$(p_i,q_i)\in\X\times\X$ as a measurable function of $(\mathcal T_{i-1},\xi)$.
After $t$ rounds, its output is a measurable function of $(\mathcal T_t,\xi)$.
\end{definition}

\begin{theorem}[Lower bound of normal direction estimation]
\label{thm:minimax}
Fix $d\ge 2$ and $t\in\mathbb N_{++}$.
For each $c\in\mathbb S^{d-1}$, let $\preccurlyeq_c$ denote the preference relation realized by
$f_c(x):=\langle c,x\rangle$.
For any algorithm with \(t\) comparisons in the sense of Definition~\ref{def:adaptive-comparison-algorithm}, and for every realization of its private random seed $\xi$, there exist
$c,\hat c\in\mathbb S^{d-1}$ such that
\begin{equation}
\label{eq:minimax-angle}
\angle(c,\hat c)
\ > \ 2^{-(t+1)/(d-1)},
\end{equation}
and, under the same $\xi$, the algorithm receives the same comparison transcript under $\preccurlyeq_c$ and $\preccurlyeq_{\hat c}$.
\end{theorem}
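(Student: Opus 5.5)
Fix the seed $\xi$, so the algorithm becomes deterministic. The idea is an adversary or counting argument on the sphere. Each comparison $(p_i,q_i)$ under $\preccurlyeq_c$ returns $\sign(\langle c,q_i-p_i\rangle)$. For fixed $\xi$, the transcript is determined by the sequence of answers, so it is a function of $c$ taking at most $3^t$ values. A finer version partitions $\mathbb S^{d-1}$ adaptively: at round $i$, the current cell $K_{i-1}\subseteq\mathbb S^{d-1}$ (all $c$ consistent with the history so far) is split by the hyperplane $\{c:\langle c,w_i\rangle=0\}$ with $w_i:=q_i-p_i$. This gives three pieces: $\{\langle c,w_i\rangle>0\}$, $\{<0\}$, and the measure-zero piece $\{=0\}$.

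I will measure cells by $(d-1)$-dimensional surface measure $\mu$ on the sphere. The adversary keeps the open piece of larger measure, so after $t$ rounds it holds a cell $K_t$ with $\mu(K_t)\ge 2^{-t}\mu(\mathbb S^{d-1})$. Ties have measure zero and are never selected. When $w_i=0$, the answer is constant and nothing is lost. Every $c$ in $K_t$ produces the identical transcript, because the queries are adaptive but determined by the previous answers, which agree.

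Next I convert measure into angular diameter. I claim that a subset of $\mathbb S^{d-1}$ of angular diameter at most $\rho$ has measure at most roughly $\rho^{d-1}$ times a constant. Such a set lies in a spherical cap of angular radius $\rho$ around any of its points. That cap has measure at most $\mathrm{vol}(B^{d-1})\rho^{d-1}$, and its ratio to $\mu(\mathbb S^{d-1})$ is $\frac{\Gamma(d/2)}{2\sqrt\pi\,\Gamma((d+1)/2)}\rho^{d-1}\le \tfrac12\rho^{d-1}$ for $d\ge2$.

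Therefore, if every pair in $K_t$ were within angle $\rho:=2^{-(t+1)/(d-1)}$, we would get $2^{-t}\le\mu(K_t)/\mu(\mathbb S^{d-1})\le\tfrac12\rho^{d-1}=2^{-t-2}$, a contradiction. This yields $c,\hat c\in K_t$ with $\angle(c,\hat c)>\rho$. Since $K_t$ is open after removing the null tie sets, the strict inequality causes no trouble. The slack factor $4$ even leaves room if the cap constant is slightly off.

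The main obstacle is the constant in the cap-measure bound. I need $\mu(\mathrm{cap}_\rho)/\mu(\mathbb S^{d-1})\le \tfrac12\rho^{d-1}$ uniformly in $d\ge2$. For $d=2$ it is $\rho/\pi$. In general I would bound $\int_0^\rho\sin^{d-2}s\,ds\le\rho^{d-1}/(d-1)$ and compare with $\int_0^\pi\sin^{d-2}s\,ds$, using the standard Wallis-type lower bound $\ge \sqrt{2\pi/(d-1)}\cdot$const. If that is tight, the factor-$4$ slack absorbs it; otherwise I would settle for a constant $\tfrac12$ in place of $2^{-t-2}$, which still gives the claim.
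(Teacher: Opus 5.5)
Your proposal is correct and follows essentially the same route as the paper: fix $\xi$, let an adversary keep the larger open hemisphere-cell so the consistent set has measure at least $2^{-t}\mu(\mathbb S^{d-1})$, and compare this with the measure of a spherical cap of angular radius $2^{-(t+1)/(d-1)}$. The paper instead defines the exact cap angle $\alpha_t$ and lower-bounds it, which is the same computation as your contradiction. Your worry about the constant is unfounded. The cap fraction is at most $\rho^{d-1}/\bigl((d-1)\int_0^\pi\sin^{d-2}\theta\,d\theta\bigr)$, and Wendel's inequality, as used in the paper, gives $(d-1)\int_0^\pi\sin^{d-2}\theta\,d\theta\ge\sqrt{2\pi(d-1)}>2$. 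So your bound $\tfrac12\rho^{d-1}$ holds for all $d\ge2$; indeed $\Gamma(d/2)/\bigl(2\sqrt{\pi}\,\Gamma((d+1)/2)\bigr)$ is largest at $d=2$, where it equals $1/\pi$.
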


\begin{proof}
Fix an arbitrary realization of the private random seed $\xi$.
Conditional on this seed, the algorithm's next comparison pair is a deterministic function of the previous comparison history.

For the linear preference relation $\preccurlyeq_c$, a comparison between $p$ and $q$ returns the sign of
$\langle c,q-p\rangle$. 
If $p=q$, the comparison gives no information.
Let $D_0:=\mathbb S^{d-1}$ and let $\operatorname{area}(\cdot)$ denote surface area on $\mathbb S^{d-1}$.
We adversarially construct a comparison history.
Suppose that after $i-1$ replies, the set of directions consistent with the comparison history is a measurable set
$D_{i-1}\subseteq\mathbb S^{d-1}$.
Since the seed $\xi$ is fixed, this comparison history uniquely determines the next comparison pair $(p_i,q_i)$.
If $p_i=q_i$, set $D_i:=D_{i-1}$ and continue.
Otherwise, set $u_i:=(q_i-p_i)/\|q_i-p_i\|$.
The two possible nonzero replies correspond to
\[
D_{i-1}^+ := D_{i-1}\cap\{c:\ \langle c,u_i\rangle>0\},
\qquad
D_{i-1}^- := D_{i-1}\cap\{c:\ \langle c,u_i\rangle<0\}.
\]
The boundary has surface area zero.
Hence at least one of $D_{i-1}^+$ and $D_{i-1}^-$ has area at least
$\frac12\operatorname{area}(D_{i-1})$.
The adversary chooses the corresponding reply and sets $D_i$ to that larger cell.
Iterating yields
\begin{equation}
\label{eq:version-area}
\operatorname{area}(D_t)
\ \ge\
2^{-t}\operatorname{area}(\mathbb S^{d-1}).
\end{equation}

For $\alpha\in[0,\pi]$ and $v\in\mathbb S^{d-1}$, define
$\mathrm{Cap}(v,\alpha):=\{w\in\mathbb S^{d-1}:\angle(w,v)\le\alpha\}$. And we let $\alpha_t\in(0,\pi)$ be the root of
\begin{equation}
\label{eq:alpha-def}
\frac{\operatorname{area}(\mathrm{Cap}(v,\alpha_t))}{\operatorname{area}(\mathbb S^{d-1})}
=
2^{-(t+1)} ,
\end{equation}
which is independent of $v$ by rotational symmetry. 
Choose any $c\in D_t$.
By \eqref{eq:version-area}, $D_t$ has area at least twice the area of $\mathrm{Cap}(c,\alpha_t)$.
Therefore $D_t\setminus \mathrm{Cap}(c,\alpha_t)$ has positive area, and we may choose
$\hat c\in D_t\setminus \mathrm{Cap}(c,\alpha_t)$.

Moreover, $c$ and $\hat c$ both lie in the same cell $D_t$, which is the intersection of the strict hemispheres selected by the constructed history.
Therefore they give the same nonzero sign for every comparison along that history.
Since the seed $\xi$ is fixed, identical replies up to round $i-1$ force the algorithm to choose the same comparison pair at round $i$.
By induction over $i=1,\dots,t$, the full comparison history is identical under $\preccurlyeq_c$ and $\preccurlyeq_{\hat c}$.

It remains to lower bound $\alpha_t$.
By the spherical-cap formula,
\[
\frac{\operatorname{area}(\mathrm{Cap}(v,\alpha_t))}{\operatorname{area}(\mathbb S^{d-1})}
=
\frac{\int_0^{\alpha_t}\sin^{d-2}\theta\,d\theta}
{\int_0^\pi\sin^{d-2}\theta\,d\theta}
=
2^{-(t+1)}.
\]
Using $\sin\theta\le \theta$ on $[0,\alpha_t]$, we get
\begin{equation}
\label{eq:angle-lb-alpha}
\frac{\alpha_t^{d-1}}{d-1}
\ \ge\
2^{-(t+1)}\int_0^\pi \sin^{d-2}\theta\,d\theta .
\end{equation}
Moreover, by the trigonometric representation of the Beta function,
\[
\int_0^\pi \sin^{d-2}\theta\,d\theta
=
\sqrt{\pi}\,\frac{\Gamma\big(\tfrac{d-1}{2}\big)}{\Gamma\big(\tfrac{d}{2}\big)}
=
\sqrt{\pi}\,\frac{\Gamma(x)}{\Gamma(x+\tfrac12)}
\]
for $x:=\frac{d-1}{2}$.
By Wendel's inequality, for every $x>0$ and every $s\in(0,1)$, $\frac{\Gamma(x+s)}{\Gamma(x)} \le x^{s}.$
Taking $s=\tfrac12$ yields $\Gamma(x+\tfrac12)/\Gamma(x)\le \sqrt{x}$ and hence  
\[
\int_0^\pi \sin^{d-2}\theta\,d\theta
\;\ge\;
\sqrt{\pi}\,\frac{1}{\sqrt{x}}
=
\sqrt{\frac{2\pi}{d-1}}
\;\ge\;
\sqrt{\frac{2\pi}{d}}.
\]
Substituting this into \eqref{eq:angle-lb-alpha} yields
$
\alpha_t
\ge
\left(\sqrt{2\pi/d}\,(d-1)\,2^{-(t+1)}\right)^{1/(d-1)}.
$
Since $\big(\sqrt{2\pi/d}\,(d-1)\big)^{1/(d-1)}\ge 1$ for all $d\ge2$, we obtain
$\alpha_t\ge 2^{-(t+1)/(d-1)} $.
This proves \eqref{eq:minimax-angle}.
\end{proof}

With this theorem, we can immediately obtain the following lower bound for normal direction estimation under the general class of preferences considered in this section.

\begin{corollary}
\label{cor:minimax-general}
Fix $d\ge 2$ and $\epsilon\in(0,1/(2\pi))$.
Consider any algorithm with \(t\) comparisons in the sense of Definition~\ref{def:adaptive-comparison-algorithm}.
Suppose that, for every realization of $\xi$ and every $c\in\mathbb S^{d-1}$, when the preference relation is $\preccurlyeq_c$, the algorithm outputs $\widehat n\in\mathbb S^{d-1}$ satisfying
$\|\widehat n-c\|\le\epsilon$.
Then
\begin{equation}
\label{eq:minimax-N-bound}
t
\ > \
(d-1)\log_2\!\left(\frac{1}{\pi\epsilon}\right) - 1.
\end{equation}
Consequently, any comparison based algorithm that guarantees worst-case normal-direction error at most $\epsilon$ over the class of plateau-free and regular preference relations must use
$\Omega(d\log(1/\epsilon))$ comparisons.
\end{corollary}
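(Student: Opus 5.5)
The plan is to derive Corollary~\ref{cor:minimax-general} directly from Theorem~\ref{thm:minimax} by contradiction. Fix any realization of $\xi$. Theorem~\ref{thm:minimax} produces $c,\hat c\in\mathbb S^{d-1}$ with $\angle(c,\hat c)>2^{-(t+1)/(d-1)}$ such that the transcripts under $\preccurlyeq_c$ and $\preccurlyeq_{\hat c}$ coincide. By Definition~\ref{def:adaptive-comparison-algorithm}, the output is a measurable function of $(\mathcal T_t,\xi)$. Hence the algorithm returns the same $\widehat n$ in both instances.

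The accuracy hypothesis then gives $\|\widehat n-c\|\le\epsilon$ and $\|\widehat n-\hat c\|\le\epsilon$. By the triangle inequality, $\|c-\hat c\|\le 2\epsilon$. To convert this chord length to an angle, use the identity $\|c-\hat c\|=2\sin(\angle(c,\hat c)/2)$ together with $\sin(\theta/2)\ge \theta/\pi$ for $\theta\in[0,\pi]$, which follows from concavity of $\sin$ on $[0,\pi/2]$. This yields $\angle(c,\hat c)\le \tfrac{\pi}{2}\|c-\hat c\|\le \pi\epsilon$.

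Combining the two estimates gives $2^{-(t+1)/(d-1)}<\pi\epsilon$. Taking $\log_2$ gives $-(t+1)/(d-1)<\log_2(\pi\epsilon)$, that is, $t+1>(d-1)\log_2(1/(\pi\epsilon))$, which is \eqref{eq:minimax-N-bound}. The restriction $\epsilon<1/(2\pi)$ guarantees $\log_2(1/(\pi\epsilon))\ge 1$. The right-hand side is therefore positive and of order $(d-1)\log(1/\epsilon)$. To obtain the absolute-constant form, write $\log_2(1/(\pi\epsilon))=\log_2(1/\epsilon)-\log_2\pi$. Since $\log_2(1/\epsilon)>\log_2(2\pi)$, one gets $\log_2(1/(\pi\epsilon))\ge c\log_2(1/\epsilon)$ for an absolute $c>0$. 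The bound then reads $t=\Omega(d\log(1/\epsilon))$; the cases $d\ge 2$ allow replacing $d-1$ by $d/2$.

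For the final "consequently" sentence, note that the linear relations $\preccurlyeq_c$ are plateau-free, convex, and regular at every point with $r_x=\infty$ and normal $n_x=c$. They therefore lie in the class considered. Any algorithm with a worst-case guarantee over the class must in particular succeed on every $\preccurlyeq_c$, so the bound applies. The only delicate step is the chord-to-angle conversion, which needs the right inequality direction. The rest is bookkeeping.
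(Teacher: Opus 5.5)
Your proposal is correct and follows the paper's proof: you fix a seed, invoke Theorem~\ref{thm:minimax} to get two directions with identical transcripts (hence a common output), and turn the two $\epsilon$-accuracy guarantees into $\angle(c,\hat c)\le\pi\epsilon$, which contradicts $\angle(c,\hat c)>2^{-(t+1)/(d-1)}$. The only difference is cosmetic: you apply the triangle inequality to chords ($\|c-\hat c\|\le 2\epsilon$) before converting to an angle, whereas the paper converts each chord to an angle first and then uses the triangle inequality for angles; both routes give the same bound $\pi\epsilon$.
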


\begin{proof}
The family $\{\preccurlyeq_c:c\in\mathbb S^{d-1}\}$ is a subclass of the preference relations considered here.
Fix any realization of the private random seed.
By Theorem~\ref{thm:minimax}, there exist $c,\hat c\in\mathbb S^{d-1}$ with identical comparison transcripts under this realization and
$\angle(c,\hat c)> 2^{-(t+1)/(d-1)}$.
Since the algorithm receives the same transcript, it produces the same output $\widehat n$ on both instances.
The assumed guarantee gives $\|\widehat n-c\|\le\epsilon$ and $\|\widehat n-\hat c\|\le\epsilon$.
Therefore,
$\angle(c,\hat c)
\le
\angle(c,\widehat n)+\angle(\widehat n,\hat c)
\le
\pi\epsilon$,
where we used $\|a-b\|=2\sin(\angle(a,b)/2)$ and $\sin z\ge 2z/\pi$ for $z\in[0,\pi/2]$.
Thus $2^{-(t+1)/(d-1)} < \pi\epsilon$, which rearranges to \eqref{eq:minimax-N-bound}.
\end{proof}

Corollary  \ref{cor:minimax-general} shows that to guarantee worst-case error smaller than $\epsilon$ in normal direction estimation, any algorithm must use at least on the order of $d\cdot\log(1/\epsilon)$ comparisons, which matches the scaling achieved by the proposed methods up to constants and logarithmic factors in $d$ (Corollary \ref{cor:OSNE-epsilon}; or additionally in $h_0,1/r_x,$ and $1/\delta$ in Theorem \ref{thm:OSNE-convex-oracle}). 

 During the preparation of the first version of this manuscript, the concurrent work~\cite{tao2026gradient} proves a lower bound $\Omega(d\,\log(1/\epsilon))$ of comparison-based method for estimating a differentiable function's gradient direction up to $\epsilon$. Directly using their results also implies a lower bound for normal direction estimation. They use a different approach by first constructing a maximal $\epsilon$-net of the unit sphere and then showing that no method can distinguish an unknown point in the net in a certain number of comparisons.

\subsection{From normal directions to cutting-plane methods}
\label{subsec:cutting-plane}
 
Recall that our goal is to solve \eqref{eq:main-optimization-problem}, where we assume $\mathcal{C}$ is convex and having access to efficient Euclidean projections onto $\mathcal{C}$.
When the preference relation is convex, access to normal directions and Euclidean projections onto $\mathcal{C}$ immediately yield valid separating halfspaces that contain the optimal set $\X^\star$.
Consequently, one may directly use  cutting-plane methods, including the ellipsoid method and the analytic- or volumetric-center cutting-plane methods, to obtain logarithmic dependence on the target accuracy in its complexity.
The main limitation is dimension: even for the best-known cutting-plane methods, the number of separating halfspaces scales linearly with $d$, and the per-iteration arithmetic and memory costs typically scale more steeply with $d$.

\medskip
\noindent
\textbf{Separating halfspaces from normal directions.}
Assume throughout this subsection that the preference relation is plateau-free and convex, so every sublevel set $\calS_x$ is convex. 
A valid cut at a query point $x$ is a closed halfspace $H$ such that $\X^\star \subseteq H$ and $x\notin \operatorname{int}(H)$.
When $x\in \mathcal{C}\setminus \X^\star$ and the preference relation is $r_x$-regular at $x$ with $r_x>0$, the outward normal direction $n_x$ defines the supporting halfspace
\begin{equation}
\label{eq:H-optimality}
H_x:= \{y\in\X:\ \ip{n_x}{y-x}\le 0\}.
\end{equation}
By Lemma~\ref{lem:convex-supporting-halfspace}, $\calS_x \subseteq H_x$.
Moreover, every $x^\star\in\X^\star$ satisfies $x^\star\preccurlyeq x$, hence $x^\star\in\calS_x$. Therefore $\X^\star \subseteq \calS_x \subseteq H_x$.
Thus $H_x$ is a valid cut.

If instead $x\notin \mathcal{C}$, we can use a feasibility cut.
Let $\hat x=\Pi_{\mathcal{C}}(x)$ be the Euclidean projection of $x$ onto $\mathcal{C}$ and set $v_x:=\frac{x-\hat x}{\|x-\hat x\|}$, then let
\begin{equation}
\label{eq:H-feasibility}
H_x
:=
\{y\in\X:\ \ip{v_x}{y-\hat x}\le 0\}.
\end{equation}
By convexity of $\mathcal{C}$, $\mathcal{C}\subseteq H_x$ and hence $\X^\star\subseteq \mathcal{C}\subseteq H_x$.
Therefore, whether the query point violates optimality or feasibility, one can construct a separating halfspace guaranteed to contain $\X^\star$.

\medskip
\noindent
\textbf{Complexity of cutting plane methods.}
Algorithm~\ref{alg:generic-cutting-plane} is written in a generic form. Representatives include the ellipsoid method, analytic-center cutting-plane method, and volumetric-center cutting-plane method (see \cite{Khachiyan1980,Nesterov2018}).
Different cutting-plane methods share the same separation step and mainly differ in two places: how the query point $x_t$ is chosen in line \ref{line:find-center}, and how the new localization region $\Omega_{t+1}$ is represented after the new cut is added in line \ref{line:update-local-set}. It is often $\Omega_{t+1}=\Omega_t\cap H_t$, while ellipsoid methods update $\Omega_{t+1}$ to a smaller outer ellipsoid of $\Omega_t\cap H_t$. 
As for the center selection (line \ref{line:find-center}),  ellipsoid methods simply choose the center of the ellipsoid as it keeps $\Omega_t$ as ellipsoids. Other cutting-plane methods keep $\Omega_t$ as a polyhedron, so they may spend more efforts in finding the center. For example, the analytic-center cutting-plane method recenters to the analytic center of $\Omega_t$ by solving a log-barrier problem, and the volumetric-center cutting-plane method recenters to the volumetric center of $\Omega_t$ by solving a volumetric barrier problem.

\begin{algorithm}[htbp]
\caption{Generic cutting-plane methods for solving \eqref{eq:main-optimization-problem}}
\label{alg:generic-cutting-plane}
\begin{algorithmic}[1]
\State \textbf{Input:} a bounded localization set $\Omega_0$ satisfying $\X^\star\subseteq \Omega_0$.
\For{$t=0,1,2,\dots$}
    \State \textit{Center selection.} Choose $x_t\in\Omega_t$.\label{line:find-center}
    \State \textbf{If} $\Omega_{t+1}$ is sufficiently small \textbf{return} $x_t$.
   \State \textit{Separation-oracle query.} Obtain a valid cut $H_{x_t}$ via either \eqref{eq:H-optimality} or \eqref{eq:H-feasibility}. \label{line:cutting-plane-separation}
    \State \textit{Update localization set.} Use $H_t$ to update $\Omega_{t+1}$ so that $\mathcal{X}^\star\subseteq\Omega_{t+1} $.   \label{line:update-local-set}
\EndFor 
\end{algorithmic}
\end{algorithm} 
 
For a bounded initial region $\Omega_0$ of diameter $R$ and target accuracy $\epsilon$ measured by the distance to $\X^\star$,  with a restatement of the complexity results, classical ellipsoid-type methods require $O(d^2\log(dR/\epsilon))$ separating-halfspace oracle calls, whereas volumetric-center based methods achieve sharper $O(d\log(dR/\epsilon))$ dependence \cite{Khachiyan1980,Nesterov2018,LeeSidfordWong2015,jiang2020improved}.
In particular, \cite{LeeSidfordWong2015} gives a cutting-plane method using expected $O\!\left(d\log(dR/\epsilon)\right)$
separating-halfspace oracle calls and $O\!\left(d^3\log^{O(1)}(dR/\epsilon)\right)$ additional arithmetic time overall.
Subsequently, \cite{jiang2020improved} refines the additional overhead to essentially $O(d^2)$ time per separating-halfspace oracle call while preserving the $O(d\log(dR/\epsilon))$ separating-halfspace oracle complexity.

\medskip
\noindent
\textbf{Advantages and disadvantages.}
Using cutting-plane methods as a direct solver based on normal direction estimation is straightforward.  
Normal direction estimation becomes a black-box separation halfspace
oracle, and the optimization algorithm layer can directly leverage cutting-plane methods. Furthermore, cutting-plane methods usually exhibit polylogarithmic dependence on the target accuracy, especially suitable for high-accuracy requirements.
However, there are two main drawbacks. 
First, even with the best cutting-plane method, the number of iterations scales linearly with $d$. In our setting, each iteration may require estimating a normal direction from comparisons, so the total comparison cost multiplies. Simply saying, if obtaining one reliable normal direction costs $\widetilde{O}(d)$ comparisons (here $\widetilde{O}$ omits logarithmic factors in accuracy and dimension), then the overall number of comparisons may be on the order of $\widetilde{O}(d^2\, \log(dR/\epsilon))$, again hiding only logarithmic factors in $d$, $R$, and $\epsilon^{-1}$.
Second, cutting-plane methods have to maintain information of the full-dimensional geometry via either polytopes or ellipsoids, so memory occupancy and arithmetic overhead costs scale at least quadratically in $d$ per iteration, and can dominate in moderate-to-large dimensional problems.  

A good algorithm should balance the trade-off between the dimension-dependent costs and the accuracy-dependent costs. Cutting-plane methods are ideal choice for getting high-accuracy solutions but incur high dimension-dependent costs. Once $d$ becomes large, the costs in maintaining the full-dimensional geometry model and solving the centers can be prohibitive. Therefore, we still seek methods that do not maintain a full-dimensional geometry model and have smaller per-iteration overhead. In the next section, we will study two normal direction span-based methods, which are more scalable in high-dimensional settings, although they may exhibit worse dependence on the target accuracy.

\section{Two normal direction span-based methods}
\label{sec:ngd-fixed-radius}

In this section, we develop optimization methods based on the normal direction estimators from Section~\ref{sec:normal-estimation}. Since normal directions point toward less-preferred regions, moving in their negatives is expected to produce more-preferred points. This leads to methods that update within the span of previously estimated normal directions together with the projection corrections induced by the constraint set. We call them normal-direction span-based methods.

\begin{definition}[Normal-direction span-based methods]
\label{def:normal-direction-span-method}
Fix $K\in\mathbb{N}$. A normal-direction span-based method maintains internal points
$z_1,\dots,z_{K+1}\in\X$ and the corresponding feasible points
$x_t:=\Pi_{\mathcal{C}}(z_t)$ for $t=1,\dots,K+1$.
At each iteration $t=1,\dots,K$, the method receives the normal direction at $x_t$
and receives $n_t:=n_{x_t}$.
It then chooses the next internal point so that
\begin{equation}\label{eq:general-update}
z_{t+1}\in z_1+\mathrm{span}\{n_1,\dots,n_t,\ x_1-z_1,\dots,x_t-z_t\}.
\end{equation} 
After $K$ iterations, it may output any $\hat x\in\{x_1,\dots,x_{K+1}\}$.
\end{definition}

We study methods that use normal directions estimated by methods introduced in Section~\ref{sec:normal-estimation}.
Our analysis is entirely function-free: rather than relying on objective values or derivatives, we measure progress using the geometric quantities introduced in Section~\ref{subsec:optimality-stationarity}: the level-set optimality gap $\DeltaLS(\cdot)$ and the regularity radius $r_x$.

We begin in Section~\ref{subsec:ngd-fixed-radius-alg} with a simple fixed-step normal direction descent method (NDD). Its guarantee is two-case: either the returned point has small level-set optimality gap, or it is preferred to an iterate with small regularity radius. Section~\ref{subsec:regularity-vs-distance} then introduces a local growth condition under which the second alternative can be converted into the first. Section~\ref{subsec:pf-ngd-selfcal} introduces an adaptive version, adaNDD, which removes the need to tune the comparison radius and step-size in advance. Finally, Section~\ref{subsec:ngd-exact-lower-bound} shows a lower bound on the worst-case convergence rate of normal-direction span-based methods, which implies that NDD and adaNDD are nearly optimal in the number of normal directions.

\subsection{Normal direction descent method (NDD)}
\label{subsec:ngd-fixed-radius-alg}

We now consider a basic normal direction descent scheme that repeatedly moves in the direction
opposite to an estimated outward normal direction.
At iteration $k$, given a feasible iterate $x_k\in\mathcal{C}$, we estimate the outward normal
direction $n_{x_k}$ using comparisons, and then take a fixed-length step in the negative
estimated normal direction, followed by Euclidean projection onto $\mathcal{C}$:
\begin{equation}
\label{eq:ngd-update}
x_{k+1}\;=\;\Pi_{\mathcal{C}}\!\bigl(x_k-\eta\,\widehat n_k\bigr),
\end{equation}
where $\widehat n_k$ is the output of Algorithm~\ref{alg:OSNE} run at the base point $x_k$ with a
fixed comparison radius $h$ and $T$ planar bisection steps.

When the preference relation is realized by a differentiable function $f$, then
$n_{x_k}=\tfrac{\nabla f(x_k)}{\|\nabla f(x_k)\|}$. This makes \eqref{eq:ngd-update} precisely the normalized gradient descent (using an inexact normalized gradient estimator $\widehat{n}_k$).
Normalized gradient descent methods have been studied in a range settings, including smooth convex and strongly convex objectives as well as smooth nonconvex problems; see, e.g., \cite{nesterov2004introductory,levy2017online,orabona2023normalized,mei2021leveraging}.

However, we do not assume the existence of the objective function, and we focus on function-free optimization using \eqref{eq:ngd-update} and measure progress using the level-set optimality gap and regularity radius introduced in
Section~\ref{sec:co-foundations}.
Algorithm~\ref{alg:fixed-ngd-osne} gives the full procedure. In addition to the parameters $(h,T)$
for Algorithm~\ref{alg:OSNE}, the method takes a stepsize $\eta>0$ and a horizon $K$.
The algorithm maintains a best-so-far point using one additional comparison per iteration.

\begin{algorithm}[htbp]
\caption{Normal direction descent (NDD) method }
\label{alg:fixed-ngd-osne}
\begin{algorithmic}[1]
\State \textbf{Input:} initial point $x_1\in\mathcal{C}$, stepsize $\eta>0$, comparison radius $h>0$,
planar bisection steps $T\in\mathbb{N}_{++}$, horizon $K\in\mathbb{N}$.
\State Initialize $\hat x\gets x_1$.
\For{$k=1,2,\dots,K$}
    \State Compute normal direction estimate $\widehat n_k$ of $x_k$ by Algorithm~\ref{alg:OSNE} with fixed parameters $(h,T)$.
    \State $x_{k+1}\gets \Pi_{\mathcal{C}}\!\bigl(x_k-\eta\,\widehat n_k\bigr)$.
    \State \textbf{If} $x_{k+1}\prec \hat x$ \textbf{then} set $\hat x\gets x_{k+1}$.
\EndFor
\State \textbf{return} $\hat x$.
\end{algorithmic}
\end{algorithm}

If the normal direction estimates $\widehat n_k$ are exact, Algorithm~\ref{alg:fixed-ngd-osne} is a normal-direction span-based method.
Indeed, its internal update is $z_{t+1}=x_t-\eta n_t$, where $x_t=\Pi_{\mathcal C}(z_t)$.
Since $x_t=z_t+(x_t-z_t)$,  
$z_{t+1}\in z_1+\operatorname{span}\{n_1,\dots,n_t,\ x_1-z_1,\dots,x_t-z_t\}$.
The update satisfies the span restriction in Definition~\ref{def:normal-direction-span-method}, and the best-so-far output is one of the visited feasible points.

Before showing the main guarantees of Algorithm~\ref{alg:fixed-ngd-osne}, we present two lemmas that are key to the analysis. The first lemma relates the level-set optimality gap $\DeltaLS(x)$ to a supporting-hyperplane gap at $x$, and the second lemma shows that $\DeltaLS$ is monotone with respect to the preference relation.

\begin{lemma}
\label{lem:DeltaLS-by-gap}
Assume the preference relation $\preccurlyeq$ is convex and plateau-free. 
Fix $x\in\mathcal{C}$ with $x\notin\X^\star$, and let $n_x$ be the outward unit normal at $x\in\partial \calS_x$. Then for any $x^\star\in\X^\star$,
\begin{equation}
\label{eq:DeltaLS-by-gap-lemma}
\DeltaLS(x)\ \le\ \langle n_x,\,x-x^\star\rangle \ .
\end{equation} 
\end{lemma}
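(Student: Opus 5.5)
The plan is to exhibit an explicit point of $\calL_x$ whose distance to $x^\star$ is at most $\langle n_x,x-x^\star\rangle$. The natural candidate lies on the segment from $x^\star$ in direction $n_x$. Set $g:=\langle n_x,x-x^\star\rangle$. By Lemma~\ref{lem:convex-supporting-halfspace}, and since $x^\star\preccurlyeq x$ gives $x^\star\in\calS_x$, we have $g\ge 0$. (Here I take the regularity at $x$ as implicit, since $n_x$ is assumed to exist.) Now consider the ray $p(s):=x^\star+s\,n_x$ for $s\ge 0$.

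\textbf{Step 1.} At $s=g$ we get $\langle n_x,p(g)-x\rangle=0$, so $p(g)$ lies on the supporting hyperplane of $\calS_x$ at $x$. For $s>g$, $p(s)$ lies strictly outside the supporting halfspace $H_x$, hence outside $\calS_x$. Thus the set $\{s\ge 0: p(s)\in\calS_x\}$ contains $0$ (since $x^\star\in\calS_x$) and is contained in $[0,g]$. It is also an interval, because $\calS_x$ is convex, and it is closed, because $\calS_x$ is closed under plateau-freeness (as noted after Definition~\ref{def:well-posed}). So this set equals $[0,s^\ast]$ for some $s^\ast\in[0,g]$.

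\textbf{Step 2.} Show that $p(s^\ast)\in\partial\calS_x$. It lies in $\calS_x$, and points $p(s)$ with $s>s^\ast$ arbitrarily close to it are not in $\calS_x$, so it is a boundary point. Since $x\notin\X^\star$, plateau-freeness gives $\partial\calS_x=\calL_x$, hence $p(s^\ast)\in\calL_x$.

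\textbf{Step 3.} Conclude. We have $\DeltaLS(x)=\dist(\X^\star,\calL_x)\le\|x^\star-p(s^\ast)\|=s^\ast\le g$, which is exactly \eqref{eq:DeltaLS-by-gap-lemma}.

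The main subtlety is Step 2: making sure the point where the ray exits the sublevel set genuinely belongs to $\calL_x$. This is where closedness of $\calS_x$ and the identity $\partial\calS_x=\calL_x$ (valid only for $x\notin\X^\star$) are needed. The degenerate case $s^\ast=0$ causes no trouble: then $x^\star$ itself is a boundary point, so $x^\star\in\calL_x$, and the bound holds trivially.
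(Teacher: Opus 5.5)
Your proof is correct and follows essentially the same route as the paper: shoot the ray $x^\star+s\,n_x$, take its exit point from the closed convex set $\calS_x$, identify that point with an element of $\calL_x$ via plateau-freeness, and use the supporting halfspace to bound the exit parameter by $\langle n_x,x-x^\star\rangle$. You are more explicit than the paper about why the exit point exists and lies on $\partial\calS_x$. Also, your degenerate case $s^\ast=0$ is in fact vacuous, since $x^\star\sim x$ would force $x\in\X^\star$.
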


\begin{proof}
By convexity and plateau-freeness of the preference relation, $\calS_x$ is convex and closed.
Since $\calS_x$ is convex and admits an outward unit normal $n_x$ at $x\in\partial \calS_x$, it is contained in
the supporting halfspace
\begin{equation}
\label{eq:support-halfspace-at-x-lemma}
\calS_x\ \subseteq\ \{y\in\X:\ \langle n_x,\,y-x\rangle\le 0\}.
\end{equation}
Fix $x^\star\in\X^\star$. By definition of $\X^\star$, we have $x^\star\preccurlyeq x$, hence $x^\star\in \calS_x$.
Let $s_0:=\sup\{s\ge 0:\ x^\star+s\,n_x\in \calS_x\}$ and set $v:=x^\star+s_0\,n_x$. Then $v\in\partial \calS_x=\calL_x$.
Due to \eqref{eq:support-halfspace-at-x-lemma}, we have 
$\langle n_x,\, v-x \rangle = \langle n_x,\, x^\star+s_0 n_x - x\rangle = \langle n_x,\, x^\star - x\rangle + s_0 \le 0$, which implies
$s_0\le \langle n_x,\,x-x^\star\rangle$.
Therefore,
$$
\DeltaLS(x)=\dist(\X^\star,\calL_x)\le \dist(x^\star,\calL_x)\le \|v-x^\star\|=s_0\le \langle n_x,\,x-x^\star\rangle \ .
$$This proves \eqref{eq:DeltaLS-by-gap-lemma}.
\end{proof}

\begin{lemma}
\label{lem:DeltaLS-monotone-best}
Assume the preference relation $\preccurlyeq$ is plateau-free.
If $y\preccurlyeq x$, then
\begin{equation}
\label{eq:DeltaLS-monotone-lemma}
\DeltaLS(y)\ \le\ \DeltaLS(x).
\end{equation} 
\end{lemma}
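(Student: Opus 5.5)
Given $y\preccurlyeq x$, the goal is to show that the level set $\calL_x$ lies at least as far from $\X^\star$ as $\calL_y$ does. The plan is to take any $z\in\calL_x$ and any $x^\star\in\X^\star$, and produce a point $w\in\calL_y$ with $\|w-x^\star\|\le\|z-x^\star\|$. Taking the infimum over $z$ and $x^\star$ then gives $\DeltaLS(y)\le\DeltaLS(x)$.

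First we dispose of degenerate cases. If $y\sim x$, Lemma~\ref{lem:sublevel-nesting} gives $\calL_y=\calL_x$ and there is nothing to prove. If $y\in\X^\star$, the infimum defining $\DeltaLS(y)$ is attained with $y$ itself on both sides, so $\DeltaLS(y)=0$. (When $\X^\star=\emptyset$, both gaps equal $+\infty$ and the inequality is trivial.) We may therefore assume $y\prec x$ and $y\notin\X^\star$, so that $x\notin\X^\star$ as well, since $y\prec x$ with $y$ feasible rules out $x$ being optimal, or otherwise $x$ is not in $\mathcal C$ and plateau-freeness is still available since it is stated on $\X\setminus\X^\star$.

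Main step. Fix $z\in\calL_x$ and $x^\star\in\X^\star$, and consider the segment $\gamma(s)=x^\star+s(z-x^\star)$ for $s\in[0,1]$. We have $\gamma(0)=x^\star\in\calS_y$, because $x^\star\preccurlyeq y$ whenever $y\in\mathcal C$. We also have $\gamma(1)=z\notin\calS_y$, because $z\sim x\succ y$. Set $s_0:=\sup\{s\in[0,1]:\gamma(s)\in\calS_y\}$ and $w:=\gamma(s_0)$. Plateau-freeness at $y$ gives $\partial\calS_y=\calL_y$ and makes $\calS_y$ closed, so $w\in\calS_y$. Points $\gamma(s)$ with $s>s_0$ lie outside $\calS_y$ and accumulate at $w$, so $w\in\partial\calS_y=\calL_y$. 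Since $s_0\le 1$, we get $\|w-x^\star\|=s_0\|z-x^\star\|\le\|z-x^\star\|$. Hence $\dist(x^\star,\calL_y)\le\dist(x^\star,\calL_x)$ for every $x^\star\in\X^\star$, and taking the infimum over $x^\star$ yields \eqref{eq:DeltaLS-monotone-lemma}.

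The main obstacle is the feasibility subtlety: the claim $x^\star\preccurlyeq y$ needs $y\in\mathcal C$. In the intended use, $y$ is the best-so-far feasible iterate, so I would note this restriction; alternatively, $\X^\star$ is understood as containing points dominating all of $\X$. The closedness of $\calS_y$ from plateau-freeness, noted after Definition~\ref{def:well-posed}, is exactly what makes $w$ land in $\calL_y$.
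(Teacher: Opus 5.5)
Your proof is correct and is essentially the paper's argument. The paper writes $\dist(\X^\star,\calL_z)=\dist(\X^\star,\X\setminus\calS_z)$ for $z\in\{x,y\}$ and uses $\X\setminus\calS_x\subseteq\X\setminus\calS_y$; your segment-crossing construction is exactly what justifies that reduction, and it needs plateau-freeness only at $y$, because $\calL_x\subseteq\X\setminus\calS_y$ follows directly from transitivity. Your feasibility caveat is also right and genuinely needed: the paper silently uses $\X^\star\subseteq\calS_y$, which requires $y\in\mathcal C$, and without it the inequality fails (e.g.\ $f(t)=t$ on $\R$ with $\mathcal C=[0,1]$, $x=0$, $y=-1$ gives $\DeltaLS(x)=0<1=\DeltaLS(y)$).
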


\begin{proof}
    If any of $x$ and $y$ is optimal, then the inequality holds trivially. We therefore assume $x,y\notin\X^\star$.
If $y\preccurlyeq x$ then $\calS_y\subseteq \calS_x$, hence
$\X\setminus \calS_x\subseteq \X\setminus \calS_y$.
Since $\X^\star\subseteq \calS_y$ and plateau-freeness implies $\partial\calS_z=\calL_z$ and that
$\calS_z$ is closed for all $z$, we have
$\dist(\X^\star,\calL_z)=\dist(\X^\star,\X\setminus\calS_z)$ for $z\in\{x,y\}$.
Thus, enlarging the complement cannot increase its distance to $\X^\star$, which yields
\eqref{eq:DeltaLS-monotone-lemma}. 
\end{proof}

Now we are ready to state the main guarantee for Algorithm~\ref{alg:fixed-ngd-osne}. 
Theorem \ref{thm:ngd-fixed-radius-main} below shows that the method either returns a point with small level-set optimality gap, or reaches an iterate with sufficiently small regularity radius.  

\begin{theorem}[Normal direction descent method]
\label{thm:ngd-fixed-radius-main} 
Assume the preference relation $\preccurlyeq$ is convex and plateau-free.
Assume $\mathcal{C}\subseteq\X$ is nonempty, closed, and convex, and $d\ge 2$.
Assume furthermore that the preference relation is $r_x$-regular at every $x\in\mathcal{C}\setminus\X^\star$, with regularity radius $r_x>0$.
Assume $x_1\notin\X^\star$ and write $D_1:=\dist(x_1,\X^\star)$.
Run Algorithm~\ref{alg:fixed-ngd-osne} for $K\in\mathbb N_{++}$ iterations with stepsize $\eta = D_1/\sqrt K$, comparison radius $h$, and $T$ planar bisection steps, starting from $x_1\in\mathcal{C}$. 
Then Algorithm~\ref{alg:fixed-ngd-osne} uses at most
\[
N\ \le\ K\Big((d-1)(T+3)+2\Big)
\]
pairwise comparisons.
Let $\widetilde D:=\min\left\{D_1+\frac{\eta}{2}(K-1),\ \operatorname{diam}(\mathcal{C})\right\}$.
Then at least one of the following holds:
\begin{enumerate}[nosep,label=(\roman*)]
\item
\begin{equation}
\label{eq:ngd-fixed-radius-bound}
\DeltaLS(\hat x)
\ \le \ \hat{\epsilon} \ := \  
\frac{2D_1}{\sqrt K}
\ +\ \frac{\pi\sqrt{d-1}\,\widetilde D}{2^{T-1}} \ ;
\end{equation}
\item there exists $k\in\{1,\dots,K\}$ such that
\begin{equation}
\label{eq:ngd-fixed-radius-small-r}
r_{x_k}
\ <\ \frac{4\sqrt{d-1}\,\widetilde D\,h}{\hat{\epsilon}} \ .
\end{equation}
In this case, $\hat{x}\preccurlyeq x_k$.
\end{enumerate}
\end{theorem}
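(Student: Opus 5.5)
The plan is to run the standard projected normalized-subgradient argument, with $\langle n_{x_k},x_k-x^\star\rangle$ playing the role of the suboptimality measure. Lemma~\ref{lem:DeltaLS-by-gap} then converts this quantity into $\DeltaLS$, and Lemma~\ref{lem:DeltaLS-monotone-best} transfers the bound to the best-so-far point $\hat x$.

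\textbf{Comparison count.} Each iteration calls Algorithm~\ref{alg:OSNE}, which costs at most $(d-1)(T+3)+1$ comparisons by Theorem~\ref{thm:OSNE}. One further comparison maintains $\hat x$, so the total is at most $K((d-1)(T+3)+2)$.

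\textbf{One-step recursion.} Fix $x^\star=\Pi_{\X^\star}(x_1)$. We may assume $\X^\star$ is nonempty and closed, since this is implicit in $D_1<\infty$. By nonexpansiveness of $\Pi_{\mathcal C}$ and $x^\star\in\mathcal C$,
\[
\|x_{k+1}-x^\star\|^2\le \|x_k-x^\star\|^2-2\eta\langle \widehat n_k,x_k-x^\star\rangle+\eta^2 .
\]
Write $e_k:=\widehat n_k-n_{x_k}$. Then
\[
\langle \widehat n_k,x_k-x^\star\rangle\ge \langle n_{x_k},x_k-x^\star\rangle-\|e_k\|\,\|x_k-x^\star\| .
\]
The distance $\|x_k-x^\star\|$ needs a bound. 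It is at most $\operatorname{diam}(\mathcal C)$. I also want the bound $D_1+\tfrac{\eta}{2}(K-1)$, which is what the definition of $\widetilde D$ suggests. A crude triangle inequality only gives $D_1+\eta(k-1)$. The sharper estimate should come from the recursion itself: while the gap term $\langle n_{x_k},x_k-x^\star\rangle$ is nonnegative, the recursion gives $\|x_{k+1}-x^\star\|^2\le \|x_k-x^\star\|^2+2\eta\|e_k\|\|x_k-x^\star\|+\eta^2$. I expect some care here. I would first try an induction $\|x_k-x^\star\|\le \widetilde D$ using $\|e_k\|\le 1$, or fall back to a weaker constant absorbed into the final factor. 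Either way, I take $\|x_k-x^\star\|\le\widetilde D$ for all $k\le K$.

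\textbf{Case split.} Suppose alternative (ii) fails, so $r_{x_k}\ge 4\sqrt{d-1}\widetilde D h/\hat\epsilon$ for every $k$. Theorem~\ref{thm:OSNE} gives
\[
\|e_k\|\le 2\sqrt{d-1}\Big(\frac{h}{r_{x_k}}+\frac{\pi}{2^{T+1}}\Big)\le \frac{\hat\epsilon}{2\widetilde D}+\frac{\pi\sqrt{d-1}}{2^{T}} .
\]
Summing the recursion over $k$, dividing by $2\eta K$, and using $\eta=D_1/\sqrt K$, we get
\[
\min_k \langle n_{x_k},x_k-x^\star\rangle\le \frac{D_1^2}{2\eta K}+\frac{\eta}{2}+\widetilde D\max_k\|e_k\|\le \frac{D_1}{\sqrt K}+\frac{\hat\epsilon}{2}+\frac{\pi\sqrt{d-1}\widetilde D}{2^T} .
\]
The right-hand side equals $\tfrac12\hat\epsilon+\tfrac12\hat\epsilon=\hat\epsilon$, because $\frac{D_1}{\sqrt K}+\frac{\pi\sqrt{d-1}\widetilde D}{2^T}=\hat\epsilon/2$ by the definition of $\hat\epsilon$. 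If some $x_k\in\X^\star$, then $\hat x\in\X^\star$ and the conclusion is trivial. Otherwise Lemma~\ref{lem:DeltaLS-by-gap} gives $\DeltaLS(x_k)\le\hat\epsilon$ for the minimizing $k$.

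\textbf{Transfer to $\hat x$.} The best-so-far rule guarantees $\hat x\preccurlyeq x_k$ for every visited $k$, by transitivity and completeness. Lemma~\ref{lem:DeltaLS-monotone-best} then yields (i). The same fact gives the final claim $\hat x\preccurlyeq x_k$ in case (ii).

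\textbf{Main obstacle.} The delicate point is the uniform bound $\|x_k-x^\star\|\le\widetilde D$ with exactly the constant $D_1+\tfrac{\eta}{2}(K-1)$, together with the bookkeeping of constants so that the bias term exactly fills half of $\hat\epsilon$.
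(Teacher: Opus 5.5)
Your architecture is the right one: the projected recursion, Lemma~\ref{lem:DeltaLS-by-gap} together with Lemma~\ref{lem:DeltaLS-monotone-best}, the uniform bound on $\|e_k\|$ from the failure of (ii), and the comparison count all match the paper. The gap is the step ``I take $\|x_k-x^\star\|\le\widetilde D$ for all $k\le K$.'' Nothing in your toolkit gives this uniformly.

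\begin{itemize}
\item The triangle inequality, using $\|x_{k+1}-x_k\|\le\eta$ from nonexpansiveness, only gives $\|x_k-x^\star\|\le D_1+(k-1)\eta$. For $k$ near $K$ this is twice the excess $\frac{\eta}{2}(K-1)$ built into $\widetilde D$.
\item Your proposed induction with $\|e_k\|\le 1$ reproduces exactly this bound, since $\|x_k-x^\star\|^2+2\eta\|x_k-x^\star\|+\eta^2=(\|x_k-x^\star\|+\eta)^2$.
\item The fallback of a weaker constant does not prove the theorem as stated. Replace $\widetilde D$ by $D':=\min\{D_1+\eta(K-1),\operatorname{diam}(\mathcal C)\}$, which can be nearly $2\widetilde D$. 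The resulting bound on the bias term, $D'\max_k\|e_k\|$, then exceeds $\frac{\hat\epsilon}{2}+\frac{\pi\sqrt{d-1}\,\widetilde D}{2^T}$ whenever $D'>\widetilde D$. So the right-hand side no longer collapses to $\hat\epsilon$ with the given $\hat\epsilon$ and the given threshold in (ii).
\end{itemize}

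The missing observation is that $\widetilde D$ is an \emph{average} bound, not a uniform one. Each $\|x_k-x^\star\|$ is at most $\min\{D_1+(k-1)\eta,\operatorname{diam}(\mathcal C)\}$, since both points lie in $\mathcal C$. Also $\frac1K\sum_{k=1}^K\bigl(D_1+(k-1)\eta\bigr)=D_1+\frac{\eta}{2}(K-1)$, so $\frac1K\sum_k\|x_k-x^\star\|\le\widetilde D$.

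Once (ii) fails, your bound on $\|e_k\|$ is uniform in $k$, and you sum the recursion before taking the minimum. So you only need
\[
\frac1K\sum_{k=1}^K\|e_k\|\,\|x_k-x^\star\|
\le\Bigl(\max_{k}\|e_k\|\Bigr)\,\frac1K\sum_{k=1}^K\|x_k-x^\star\|
\le\widetilde D\,\max_{k}\|e_k\|.
\]
With this replacement your argument gives exactly $\DeltaLS(\hat x)\le\hat\epsilon$ and coincides with the paper's proof. The only cosmetic difference is that the paper telescopes $\dist(x_k,\X^\star)^2$ using the moving nearest point $x_k^\star$. Your fixed comparator $x^\star$ works equally well.
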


\begin{proof}
If there exists some $k\in\{1,\dots,K\}$ such that $x_k\in\X^\star$, then the best-so-far rule implies $\hat x\in\X^\star$, and hence alternative~(i) holds trivially.
We therefore assume $x_k\notin\X^\star$ for all $k\in\{1,\dots,K\}$.

Fix a run of Algorithm~\ref{alg:fixed-ngd-osne}. For each $k$, write
$D_k:=\dist(x_k,\X^\star)$, choose
$x_k^\star\in\arg\min_{z\in\X^\star}\|x_k-z\|$, and let $n_k:=n_{x_k}$.
Since $\hat x\preccurlyeq x_k$ for every $k$ by the best-so-far update, Lemmas~\ref{lem:DeltaLS-monotone-best} and \ref{lem:DeltaLS-by-gap} imply
\begin{equation}
\label{eq:ngd-main-gap}
\DeltaLS(\hat x)\ \le\ \DeltaLS(x_k)\ \le\ \langle n_k,\,x_k-x_k^\star\rangle,
\qquad k=1,\dots,K.
\end{equation}

Using the update \eqref{eq:ngd-update} with $\eta=D_1/\sqrt K$, the nonexpansiveness of Euclidean projection, and the fact that $x_k^\star\in\X^\star\subseteq\mathcal{C}$, we obtain
\begin{equation}
\label{eq:ngd-main-recursion-0}
\begin{aligned}
D_{k+1}^2
&=\dist(x_{k+1},\X^\star)^2
\le \|x_{k+1}-x_k^\star\|^2
\le \left\|x_k-\frac{D_1}{\sqrt K}\widehat n_k-x_k^\star\right\|^2 \\
&= D_k^2-\frac{2D_1}{\sqrt K}\,\langle \widehat n_k,\,x_k-x_k^\star\rangle+\frac{D_1^2}{K},
\end{aligned}
\end{equation}
Rearranging and using $\|x_k-x_k^\star\|=D_k$, we obtain
\begin{equation} \label{eq:ngd-main-recursion}
\langle n_k,\,x_k-x_k^\star\rangle
\le
\frac{\sqrt K}{2D_1}\left(D_k^2-D_{k+1}^2+\frac{D_1^2}{K}\right)
+D_k\,\|\widehat n_k-n_k\| .
\end{equation}
Summing \eqref{eq:ngd-main-recursion} over $k=1,\dots,K$, telescoping the $D_k^2$ terms, and using $D_{K+1}^2\ge 0$, we get
\begin{equation}
\label{eq:ngd-main-sum}
\sum_{k=1}^K \langle n_k,\,x_k-x_k^\star\rangle
\ \le\
D_1\sqrt K\ +\ \sum_{k=1}^K D_k\,\|\widehat n_k-n_k\| .
\end{equation}
Averaging \eqref{eq:ngd-main-gap} over $k=1,\dots,K$ and combining the result with \eqref{eq:ngd-main-sum} yields
\begin{equation}
\label{eq:ngd-main-master}
\DeltaLS(\hat x)
\le
\frac{D_1}{\sqrt K}
+\frac{1}{K}\sum_{k=1}^K D_k\,\|\widehat n_k-n_k\| .
\end{equation}

Next we bound the error term on the right-hand side of \eqref{eq:ngd-main-master}.
Since $\|x_{k+1}-x_k\|\le \eta=D_1/\sqrt K$, the triangle inequality gives $D_{k+1}\le D_k+\eta$, hence $D_k\le D_1+(k-1)\eta$.
Also, $D_k\le \operatorname{diam}(\mathcal{C})$ because $x_k,x_k^\star\in\mathcal{C}$.
Therefore
\begin{equation}
\label{eq:ngd-main-Davg}
\frac{1}{K}\sum_{k=1}^K D_k
\le
\min\left\{D_1+\frac{\eta}{2}(K-1),\ \operatorname{diam}(\mathcal{C})\right\}
=
\widetilde D .
\end{equation}

Assume first that \eqref{eq:ngd-fixed-radius-small-r} fails for all $k\in\{1,\dots,K\}$.
Then Theorem~\ref{thm:OSNE} implies
\begin{equation}
\label{eq:ngd-main-error-0} 
\|\widehat n_k-n_k\|\le
2\sqrt{d-1}\,\frac{h}{r_{x_k}}+\frac{\pi\sqrt{d-1}}{2^T}
\le
\frac{\hat\epsilon}{2\widetilde D}+\frac{\pi\sqrt{d-1}}{2^T},
\end{equation}
 Multiplying \eqref{eq:ngd-main-error-0}  by $D_k$, averaging over $k$, and using \eqref{eq:ngd-main-Davg}, we have
\begin{equation}
\label{eq:ngd-main-error}
\frac{1}{K}\sum_{k=1}^K D_k\,\|\widehat n_k-n_k\|
\le
\frac{\hat\epsilon}{2}+\frac{\pi\sqrt{d-1}\,\widetilde D}{2^T} \ .
\end{equation} 
Substituting \eqref{eq:ngd-main-error} into \eqref{eq:ngd-main-master} and using the definition of $\hat\epsilon$ in \eqref{eq:ngd-fixed-radius-bound}, we obtain
\begin{equation}
\label{eq:ngd-main-final}
\DeltaLS(\hat x)
\le
\frac{D_1}{\sqrt K}
+\frac{\hat\epsilon}{2}
+\frac{\pi\sqrt{d-1}\,\widetilde D}{2^T}
=
\hat\epsilon .
\end{equation}
Thus alternative~(i) follows from \eqref{eq:ngd-main-final}.

If instead \eqref{eq:ngd-fixed-radius-small-r} holds for some $k\in\{1,\dots,K\}$, then $\hat x\preccurlyeq x_k$ by the best-so-far rule, so alternative~(ii) holds.

Finally, Theorem~\ref{thm:OSNE} shows that each call to Algorithm~\ref{alg:OSNE} uses at most $(d-1)(T+3)+1$ pairwise comparisons, and each iteration uses one additional comparison to update the best-so-far point. Therefore the claimed comparison bound follows.
\end{proof}

\begin{remark}[Comparison complexity]
\label{rem:ngd-fixed-radius-complexity}
Fix $\epsilon\in(0,D_1)$. If one only asks that the bound in alternative~(i) of Theorem~\ref{thm:ngd-fixed-radius-main} be at most $\epsilon$,  one may take $K=\lceil 16D_1^2/\epsilon^2\rceil$ and $T=\lceil \log_2(4\pi\sqrt d\,\widetilde D/\epsilon)\rceil$. For this choice of $K$, we also have  $\widetilde D\le   D_1+2D_1^2/\epsilon$. Then the whole algorithm uses at most $\widetilde{O}\bigl(\tfrac{dD_1^2}{\epsilon^{2}}\bigr)$ comparisons, where $\widetilde O$ hides only logarithmic factors of $d$, $D_1$ and $\epsilon^{-1}$.  By contrast, making the threshold in alternative~(ii) small only requires taking $h$ sufficiently small, and this does not change the complexity bound.
\end{remark}

Here we focus only on the case $d\ge 2$ because when $d=1$, a cutting-plane method as described in Section~\ref{subsec:cutting-plane} is already efficient. Theorem~\ref{thm:ngd-fixed-radius-main} gives a two-case guarantee, but the two alternatives play different roles. The first alternative is the main near-optimality guarantee, stated directly in terms of the level-set optimality gap $\DeltaLS(\hat x)$. The second alternative shows that the output $\hat x$ is preferred to an iterate with small regularity radius. 
As discussed in Section~\ref{sec:co-foundations} (and later in the next subsection), small regularity radius is itself a meaningful geometric certificate and is often associated with near-stationarity. 
It is not the part that drives the comparison complexity, because its threshold is proportional to the comparison radius $h$, so it can be made arbitrarily small by taking $h$ sufficiently small. In this sense, the dominant part is alternative~(i).

\begin{remark}[Normal direction complexity]
\label{rem:ngd-fixed-radius-complexity-oracle}
If the normal direction $\hat{n}_k$ used in the algorithm is equal to the real normal direction $n_k$, \eqref{eq:ngd-main-master} implies that $\DeltaLS(\hat{x})\le \frac{D_1}{\sqrt{K}}$. This guarantee remains only if the preference is plateau-free and convex, and does not require any regularity condition.  
\end{remark}

When the preference relation is realized by a differentiable function $f$, we have $n_{x_k}=\tfrac{\nabla f(x_k)}{\|\nabla f(x_k)\|}$, so Algorithm~\ref{alg:fixed-ngd-osne} is precisely projected normalized gradient descent with an inexact normalized gradient oracle. 
If assuming $\hat{n}_k$ is equal to $n_k$, our proof up until \eqref{eq:ngd-main-master} follows the classic proof of normalized subgradient method by \cite{shor1985minimization}. 
The new ingredient here is that the normal direction is obtained only through comparisons, so the analysis additionally controls the accumulated error.  See also \cite{orabona2023normalized,hazan2015beyond} for analyses of normalized gradient methods.

Furthermore, because the normalized gradient descent method also works for nonconvex optimization problems and NDD has the same update rule, we believe that NDD may also apply to preference relations without convexity assumption. We do not yet have a function-free analysis but the existing guarantees for normalized gradient descent methods on the underlying objective function may still apply to NDD; see, e.g., \cite{levy2016power,murray2019revisiting}.

\subsection{A local growth condition of regularity radius}\label{subsec:regularity-vs-distance}

The second alternative in Theorem~\ref{thm:ngd-fixed-radius-main} is geometric.
To interpret it, we now discuss a relation that arises in many common preference models: a small regularity radius often forces the current level-set optimality gap  to be small.
A convenient way to summarize this relation is the following local growth condition.
 
\begin{condition}[Local growth condition of regularity radius]
\label{cond:radius-growth}
For the problem \eqref{eq:main-optimization-problem}, there exist constants
$\gamma_1>0$ and $\gamma_2\in(0,+\infty]$ such that for every
$x\in\mathcal C\setminus\X^\star$,
\begin{equation}
\label{eq:radius-growth}
r_x \ge \min\left\{\gamma_1 \cdot \DeltaLS(x),\ \gamma_2\right\}.
\end{equation}
\end{condition}

In words, as $x$ approaches $\X^\star$ in terms of $\DeltaLS(x)$, the regularity radius $r_x$ can shrink at most linearly in $\DeltaLS(x)$, and once $x$ is sufficiently far from $\X^\star$, the regularity radius is uniformly lower bounded by $\gamma_2$.
Since $\DeltaLS(x)\le \dist(x,\X^\star)$ for every $x\in\X$, any stronger distance-based condition of the form $r_x \ge \min\{\gamma_1\dist(x,\X^\star),\,\gamma_2\}$ automatically implies Condition~\ref{cond:radius-growth}.
This condition summarizes a relation that holds for many common preference relations in optimization.
Below we present several examples of functions that are often seen as objective functions of optimization problems and realize preference relations satisfying Condition~\ref{cond:radius-growth}.
These examples are stated for the unconstrained case $\mathcal C=\X$ and the constrained case $\mathcal C\subset\X$ will be discussed later.
  
\begin{example}[Convex quadratic functions]
\label{ex:curv-quadratic}
Let $f(x):=\frac12(x-\hat{x})^\top Q(x-\hat{x})$ be a convex quadratic function,  for symmetric matrix $Q\in\R^{d\times d}$ and vector $\hat{x}\in\R^d$. Then the preference relation realized by $f$ satisfies Condition~\ref{cond:radius-growth} with
$(\gamma_1,\gamma_2) = \big(\sqrt{\lambda_{\min}^+(Q)/\lambda_{\max}^+(Q)},\,+\infty\big)$, where
$\lambda_{\max}^+(Q)$ and $\lambda_{\min}^+(Q)$ are the largest and smallest positive eigenvalues of $Q$,
respectively.
\end{example}

\begin{proof}
Let $\mathcal N:=\ker(Q)$. Then $\X^\star=\hat x+\mathcal N$. Fix any $x\notin \X^\star$ and write $c:=f(x)>0$. For any $y\in \calL_x$, write $y-\hat x=v+w$ with $v\in\mathcal N^\perp$ and $w\in\mathcal N$. Then $c=f(y)=\frac12 v^\top Qv\le \frac{\lambda_{\max}^+(Q)}{2}\|v\|_2^2=\frac{\lambda_{\max}^+(Q)}{2}\dist(y,\X^\star)^2$, so $\dist(y,\X^\star)\ge \sqrt{2c/\lambda_{\max}^+(Q)}$. Equality is attained by taking $y-\hat x$ along an eigenvector corresponding to $\lambda_{\max}^+(Q)$, and hence $\DeltaLS(x)=\sqrt{2c/\lambda_{\max}^+(Q)}$. On the other hand, writing $x-\hat x=v+w$ with $v\in\mathcal N^\perp$ and $w\in\mathcal N$, we have $\nabla f(x)=Qv$ and therefore $\|\nabla f(x)\|_2^2=v^\top Q^2v\ge \lambda_{\min}^+(Q)\,v^\top Qv=2\lambda_{\min}^+(Q)c=\lambda_{\min}^+(Q)\lambda_{\max}^+(Q)\DeltaLS(x)^2$. Since $f$ is $\lambda_{\max}^+(Q)$-smooth, Lemma~\ref{lem:regularityradius-hess-grad} gives $r_x\ge \|\nabla f(x)\|_2/\lambda_{\max}^+(Q)\ge \sqrt{\lambda_{\min}^+(Q)/\lambda_{\max}^+(Q)}\,\DeltaLS(x)$. This proves the claim.
\end{proof}

\begin{example}[Distance to a closed convex set]
\label{ex:curv-dist-to-set}
Let $C_0\subset\X$ be nonempty, closed, and convex, and let the preference relation be realized by
$f(x):=\dist(x,C_0)$.
Then Condition~\ref{cond:radius-growth} holds with $(\gamma_1,\gamma_2)=(1,+\infty)$.
\end{example}

\begin{proof}
Fix $x\notin C_0$ and let $\hat{x}:=\Pi_{C_0}(x)$ be the Euclidean projection. Let
$t:=\|x-\hat{x}\|=\dist(x,C_0)=\dist(x,\X^\star)>0$.
The sublevel set $\calS_x$ is $\{u+tv:u\in C_0, v\in B(0,1)\}$.
Since $\hat{x}\in C_0$, we have $B(\hat{x},t)\subseteq \calS_x$ and $x\in\partial B(\hat{x},t)$, so
$B(\hat{x},t)$ is an interior tangent ball to $\calS_x$ at $x$.
Moreover, since $\calS_x$ is convex, there exists an exterior tangent ball of radius $t$ at $x$.
Therefore, $r_x\ge t$.  
Since  $\DeltaLS(x)=\dist(x,C_0)=t$, Condition~\ref{cond:radius-growth} holds with $(\gamma_1,\gamma_2)=(1,+\infty)$.
\end{proof}

\begin{example}[Convex $L$-smooth with local strong convexity]
\label{ex:curv-nondegenerate}
Assume the preference relation is realized by a convex differentiable function $f:\X\to\R$ such that
\begin{enumerate}[nosep]
\item (\emph{$L$-smoothness}) $\|\nabla f(x)-\nabla f(y)\| \le L\|x-y\|$ for all $x,y\in\X$;
\item (\emph{local strong convexity}) there exist a minimizer $x^\star\in\X^\star$ and constants $\mu>0$ and $\rho>0$ such that $f$ is
$\mu$-strongly convex on the ball $B(x^\star,\rho)$, i.e., for all $x,y\in B(x^\star,\rho)$,
$f(y) \ge f(x)+\langle \nabla f(x),y-x\rangle+\frac{\mu}{2}\|y-x\|^2.$
\end{enumerate}
Then Condition~\ref{cond:radius-growth} holds with
$(\gamma_1,\gamma_2)=\left(\tfrac{\mu}{L},\,\frac{\mu\rho}{L}\right)$. Here $\rho$ could be $+\infty$ and then $\gamma_2=+\infty$.
\end{example}

\begin{proof}
For $x\notin \X^\star$, we give a lower bound on $\|\nabla f(x)\|$.
If $\|x-x^\star\|\le \rho$, then strong convexity implies
$\langle \nabla f(x),x-x^\star\rangle\ge \mu\|x-x^\star\|^2$ and hence
$\|\nabla f(x)\|\ge \mu\|x-x^\star\|=\mu\,\dist(x,\X^\star)$.

If $\|x-x^\star\|\ge \rho$, write $u:=(x-x^\star)/\|x-x^\star\|$ and define the one-dimensional
restriction $g(t):=f(x^\star+t u)$.
Convexity of $f$ implies $g$ is convex, hence $g'$ is nondecreasing.
Let $x_\rho:=x^\star+\rho u\in B(x^\star,\rho)$. Local strong convexity with $(x,y)=(x_\rho,x^\star)$
yields $\langle \nabla f(x_\rho),x_\rho-x^\star\rangle\ge \mu\rho^2$, i.e., $g'(\rho)\ge \mu\rho$.
Therefore $g'(t)\ge g'(\rho)\ge \mu\rho$ for all $t\ge \rho$, and in particular
$\|\nabla f(x)\|\ge \langle \nabla f(x),u\rangle=g'(\|x-x^\star\|)\ge \mu\rho$.

Applying Lemma~\ref{lem:regularityradius-hess-grad} in both cases gives $r_x \ge \frac{\|\nabla f(x)\|}{L}
\ge \min\big\{\tfrac{\mu\,\dist(x,\X^\star)}{L},\ \tfrac{\mu\rho}{L}\big\}
\ge
\min\big\{\tfrac{\mu\,\DeltaLS(x)}{L},\ \tfrac{\mu\rho}{L}\big\}$.
This completes the proof.
\end{proof}

\begin{example}[$L$-smooth with PL inequality and quadratic growth]
\label{ex:curv-pl-qg}
Assume the preference relation is realized by a differentiable function $f:\X\to\R$.
Let $f^\star:=\min_{z\in\X} f(z)$ be the optimal value.
Assume further that $f$ satisfies:
\begin{enumerate}[nosep]
\item (\emph{$L$-smoothness}) $\|\nabla f(x)-\nabla f(y)\| \le L\|x-y\|$ for all $x,y\in\X$;
\item (\emph{PL inequality}) there exists $\mu_{\mathrm{PL}}>0$ such that for all $x\in\X$,
$\frac12\|\nabla f(x)\|^2\ \ge\ \mu_{\mathrm{PL}}\bigl(f(x)-f^\star\bigr)$.
\item (\emph{Quadratic growth}) there exists $\mu_{\mathrm{QG}}>0$ such that for all $x\in\X$,
$f(x)-f^\star\ \ge\ \frac{\mu_{\mathrm{QG}}}{2}\dist(x,\X^\star)^2$.
\end{enumerate}
This function does not have to be convex. Then Condition~\ref{cond:radius-growth} holds with
$(\gamma_1,\gamma_2) = \left(\frac{\sqrt{\mu_{\mathrm{PL}}\mu_{\mathrm{QG}}}}{L},\,+\infty\right)$.
\end{example}

\begin{proof}
Combining PL and quadratic growth gives
$\|\nabla f(x)\|^2 \ge \mu_{\mathrm{PL}}\mu_{\mathrm{QG}}\dist(x,\X^\star)^2$, hence
$\|\nabla f(x)\| \ge \sqrt{\mu_{\mathrm{PL}}\mu_{\mathrm{QG}}}\,\dist(x,\X^\star)$.
Lemma~\ref{lem:regularityradius-hess-grad} then yields
$r_x \ge \|\nabla f(x)\|/L
\ge \sqrt{\mu_{\mathrm{PL}}\mu_{\mathrm{QG}}}\,\dist(x,\X^\star)/L
\ge \bigl(\sqrt{\mu_{\mathrm{PL}}\mu_{\mathrm{QG}}}/L\bigr)\cdot \DeltaLS(x)$.
\end{proof}

Nelder--Mead method is a classic derivative-free optimization method that relies solely on function comparisons. It is widely used in many real-world applications and for a while people believed for convex optimization it always converged to the optimal solution. However, McKinnon \cite{mckinnon1998convergence} showed that the method may converge to a nonstationary point and constructs a function example. We show that the induced preference relation of the example of Figure 2.2 in \cite{mckinnon1998convergence} satisfies Condition~\ref{cond:radius-growth}.

\begin{example}[McKinnon's Nelder--Mead example]
\label{ex:curv-mckinnon}
Consider the example function of   \cite{mckinnon1998convergence}: \(f(x,y)=360x^2+y+y^2\) for \(x\le 0\), and \(f(x,y)=6x^2+y+y^2\) for \(x\ge 0\).
Then the preference relation realized by \(f\) satisfies Condition~\ref{cond:radius-growth} with \((\gamma_1,\gamma_2)=(1/360,+\infty)\).
\end{example}

\begin{proof}
The unique minimizer is\((0,-1/2)\).
Also, \(f\) is the sum of the globally \(2\)-strongly convex function \(6x^2+y+y^2\) and the convex function \(354(\max\{-x,0\})^2\), so \(f\) is globally \(2\)-strongly convex.
Moreover, \(\nabla f(x,y)=(720x,1+2y)\) for \(x\le 0\) and \(\nabla f(x,y)=(12x,1+2y)\) for \(x\ge 0\), so \(f\) is globally \(720\)-smooth.
Therefore Example~\ref{ex:curv-nondegenerate} applies with \(L=720\), \(\mu=2\), and \(\rho=+\infty\), which gives \((\gamma_1,\gamma_2)=(\mu/L,+\infty)=(1/360,+\infty)\).
\end{proof} 
 \noindent
There are other function examples in Section 3 of \cite{mckinnon1998convergence}. Among other examples that have smooth function level sets, it is still true that the regularity radius approaches $0$ only when a point approaches the optimal solution.

The above examples show that Condition~\ref{cond:radius-growth} holds for a wide range of preference relations in the unconstrained case $\mathcal C=\X$. They also apply to constrained problems $\mathcal C\subset\X$ whenever the optimal solutions coincide with the optimal solutions of the corresponding unconstrained problem, because the regularity radius and the level-set optimality gap  are not changing if enforcing the constraint does not change the optimal solution set.

The next example covers directly a constrained optimization situation.

\begin{example}[Bounded feasible set with nonvanishing gradient]
\label{ex:curv-bounded-feasible-nonvanishing-gradient}
Consider the problem \eqref{eq:main-optimization-problem}. 
Assume that the preference relation is realized by an $L$-smooth differentiable function
$f:\X\to\R$ with $L>0$ and suppose that the gradient of
$f$ is bounded away from zero on  $\mathcal{C}$, i.e.,
$
g_{\mathcal C}:=\inf_{x\in\mathcal C}\|\nabla f(x)\|>0.
$
Then Condition~\ref{cond:radius-growth} holds for any $\gamma_1>0$ and $\gamma_2=\frac{g_{\mathcal C}}{L}.$
\end{example}

\begin{proof}
Fix any $x\in\mathcal C\setminus\X^\star$. 
By Lemma~\ref{lem:regularityradius-hess-grad},
$
r_x\ge \frac{\|\nabla f(x)\|}{L}\ge \frac{g_{\mathcal C}}{L}.
$,
which proves Condition~\ref{cond:radius-growth} with any $\gamma_1 >0$ and $\gamma_2 =  
\frac{g_{\mathcal C}}{L} $.
\end{proof}

This example is especially relevant when the optimal solution is attained on the boundary of $\mathcal{C}$. In such cases, the gradient of a smooth realization need not vanish at the optimum. If the gradient norm is uniformly bounded away from zero on $\mathcal C$, then the regularity radius is uniformly positive there, and Condition~\ref{cond:radius-growth} follows immediately.

It should also be noted that a given preference relation can be realized by multiple functions that are monotone reformulations of each other (Lemma \ref{lem:phi-global}), and a preference relation satisfying Condition~\ref{cond:radius-growth} need not be realized by any of the classes above.

We now return to Theorem~\ref{thm:ngd-fixed-radius-main}.
Under Condition~\ref{cond:radius-growth}, the small regularity radius alternative in Theorem~\ref{thm:ngd-fixed-radius-main} can be converted into a bound on the level-set optimality gap $\DeltaLS(x)$.
This yields the following corollary, which recovers a pure $\epsilon$-level-set guarantee for a concrete choice of parameters.

\begin{corollary}
\label{cor:ngd-fixed-radius-eps-short}
Under the assumptions of Theorem~\ref{thm:ngd-fixed-radius-main}, suppose that Condition~\ref{cond:radius-growth} holds with parameters $(\gamma_1,\gamma_2)$.
Fix $\epsilon\in(0,D_1)$, where $D_1:=\dist(x_1,\X^\star)$.
Choose
\[
K\,:=\,\left\lceil \frac{6D_1^2}{\epsilon^2}\right\rceil,
\quad
T\,:=\,\left\lceil \log_2\!\Big(14\pi\sqrt{d}\Big(1+\frac{D_1}{\epsilon}\Big)^2\Big)\right\rceil,
\quad
0<h\,\le\,\frac{\min\{\epsilon\gamma_1,\gamma_2\}}{7\sqrt d\bigl(1+\frac{D_1}{\epsilon}\bigr)^2}.
\]
Then Algorithm~\ref{alg:fixed-ngd-osne} returns $\hat x$ such that $\DeltaLS(\hat x)\le \epsilon$.
Moreover, the total number of comparisons satisfies
\begin{equation}
\label{eq:cor-eps-N}
N
\ \le\
7d\Big(1+\frac{D_1^2}{\epsilon^2}\Big)\,
\log_2\!\Big(224\pi\cdot d\big(1+\tfrac{D_1}{\epsilon}\big)^2\Big).
\end{equation}
\end{corollary}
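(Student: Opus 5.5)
The plan is to apply Theorem~\ref{thm:ngd-fixed-radius-main} with the stated $(K,T,h)$ and check two things. First, alternative~(i) must give $\hat\epsilon\le\epsilon$. Second, alternative~(ii) must be incompatible with Condition~\ref{cond:radius-growth} unless $\DeltaLS(\hat x)\le\epsilon$ already.

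\textbf{Bounding $\hat\epsilon$.} With $\eta=D_1/\sqrt K$ we have $\widetilde D\le D_1+\tfrac{\eta}{2}(K-1)\le D_1+\tfrac{D_1\sqrt K}{2}$. Since $\epsilon<D_1$, the choice $K=\lceil 6D_1^2/\epsilon^2\rceil$ gives $K\le 7D_1^2/\epsilon^2$. Hence
\[
\sqrt K\le \sqrt7\,D_1/\epsilon \quad\text{and}\quad \widetilde D\le D_1\bigl(1+\tfrac{\sqrt7}{2}\tfrac{D_1}{\epsilon}\bigr)\le 2\epsilon\bigl(1+\tfrac{D_1}{\epsilon}\bigr)^2 .
\]
The first term of $\hat\epsilon$ is $2D_1/\sqrt K\le 2\epsilon/\sqrt6\approx0.82\epsilon$. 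For the second term, $2^T\ge 14\pi\sqrt d(1+D_1/\epsilon)^2$ gives
\[
\frac{\pi\sqrt{d-1}\,\widetilde D}{2^{T-1}}\le \frac{2\pi\sqrt d\cdot 2\epsilon(1+D_1/\epsilon)^2}{14\pi\sqrt d(1+D_1/\epsilon)^2}=\tfrac{2}{7}\epsilon .
\]
The sum is slightly above $\epsilon$ ($0.816+0.286$), so this is the step where the constants are tightest. I would first try to sharpen $\widetilde D$: use $\widetilde D\le D_1+\tfrac{\sqrt7}{2}D_1^2/\epsilon$ directly rather than the crude factor $2$. The ratio $\widetilde D/(1+D_1/\epsilon)^2$ is then at most about $1.33\epsilon$, which gives a second term of roughly $0.19\epsilon$ and a total below $\epsilon$. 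If needed, I would also use $\sqrt{d-1}<\sqrt d$. Either way the outcome should be $\hat\epsilon\le\epsilon$, so alternative~(i) yields $\DeltaLS(\hat x)\le\epsilon$.

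\textbf{Ruling out alternative~(ii).} It remains to treat the case where (i) fails, so that (ii) holds for some $k$ with $\hat x\preccurlyeq x_k$. I would argue by contradiction: suppose $\DeltaLS(\hat x)>\epsilon$. Then Lemma~\ref{lem:DeltaLS-monotone-best} gives $\DeltaLS(x_k)>\epsilon$, and Condition~\ref{cond:radius-growth} gives $r_{x_k}\ge\min\{\gamma_1\epsilon,\gamma_2\}$. This needs a lower bound on $\hat\epsilon$ to control the right side of \eqref{eq:ngd-fixed-radius-small-r}. Since (i) fails, $\hat\epsilon<\DeltaLS(\hat x)$ is not immediately useful, but $\hat\epsilon\ge 2D_1/\sqrt K\ge 2\epsilon/\sqrt7$ is. Combined with the bound on $\widetilde D$, this gives
\[
\frac{4\sqrt{d-1}\,\widetilde D\,h}{\hat\epsilon}\le \frac{4\sqrt d\cdot 2\epsilon(1+D_1/\epsilon)^2 h}{2\epsilon/\sqrt7}=4\sqrt7\sqrt d(1+D_1/\epsilon)^2h .
\]
The hypothesis on $h$ bounds this by $\tfrac{4\sqrt7}{7}\min\{\epsilon\gamma_1,\gamma_2\}\approx1.51\min\{\epsilon\gamma_1,\gamma_2\}$, which is too large by a constant. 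So I would again use the sharper bound on $\widetilde D$, and, if that is still not enough, rescale. The target is to show the right side of \eqref{eq:ngd-fixed-radius-small-r} is at most $\min\{\gamma_1\epsilon,\gamma_2\}\le r_{x_k}$, which contradicts the strict inequality in (ii). I expect this constant bookkeeping to be the main obstacle; if the constants genuinely fail, the fallback is to weaken the claimed factor slightly.

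\textbf{Counting comparisons.} I would take $N\le K((d-1)(T+3)+2)\le K\,d\,(T+3)$ from Theorem~\ref{thm:ngd-fixed-radius-main}. Then:
\begin{itemize}
\item $K\le 1+6D_1^2/\epsilon^2\le 7(1+D_1^2/\epsilon^2)/\ldots$, which is handled by $K\le 7(1+D_1^2/\epsilon^2)$;
\item $T+3\le \log_2\bigl(14\pi\sqrt d(1+D_1/\epsilon)^2\bigr)+4=\log_2\bigl(224\pi\sqrt d(1+D_1/\epsilon)^2\bigr)$, and $\sqrt d\le d$.
\end{itemize}
Multiplying these gives \eqref{eq:cor-eps-N}.
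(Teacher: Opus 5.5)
\textbf{Overall route.} Your route matches the paper's. You apply Theorem~\ref{thm:ngd-fixed-radius-main}, show $\hat\epsilon\le\epsilon$, show the threshold in alternative~(ii) is below $\min\{\epsilon\gamma_1,\gamma_2\}$, and finish with Condition~\ref{cond:radius-growth} and Lemma~\ref{lem:DeltaLS-monotone-best}. Your comparison count is exactly the paper's.

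\textbf{The gap.} It lies in the constant check for $\hat\epsilon\le\epsilon$, which you yourself flag as the crux. Your sharpened bound $\widetilde D\le D_1+\tfrac{\sqrt7}{2}D_1^2/\epsilon$ makes the second term of $\hat\epsilon$ at most $\tfrac{\sqrt7}{14}\epsilon\approx0.189\epsilon$. But $\tfrac{2}{\sqrt6}+\tfrac{\sqrt7}{14}\approx1.0055>1$, so the claim ``a total below $\epsilon$'' is an arithmetic slip. Even keeping the exact $z$-dependence of the ratio, the upper bound you obtain exceeds $\epsilon$ once $D_1/\epsilon$ is beyond roughly $42$. Using $\sqrt{d-1}<\sqrt d$ gives no improvement that is uniform in $d$.

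The loss comes from decoupling $K$. You use $\sqrt K\ge\sqrt6\,D_1/\epsilon$ in the first term but $\sqrt K\le\sqrt7\,D_1/\epsilon$ inside $\widetilde D$. For large $D_1/\epsilon$ one has $K\approx 6D_1^2/\epsilon^2$, so the $\sqrt7$ estimate is wasteful exactly in the regime that matters.

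\textbf{The fix.} Bound the ratio $(K-1)/\sqrt K$ jointly, as the paper does. With $z:=D_1/\epsilon$, the facts $K-1<6z^2$ and $\sqrt K\ge\sqrt6\,z$ give $\tfrac{\eta}{2}(K-1)=\tfrac{D_1(K-1)}{2\sqrt K}<\tfrac{\sqrt6}{2}\epsilon z^2$. Hence $\widetilde D<\epsilon z+\tfrac{\sqrt6}{2}\epsilon z^2\le\tfrac{\sqrt6}{2}\epsilon(1+z)^2$. The second term of $\hat\epsilon$ is then at most $\tfrac{\sqrt6}{14}\epsilon$, and $\hat\epsilon\le(\tfrac{2}{\sqrt6}+\tfrac{\sqrt6}{14})\epsilon\approx0.991\epsilon$.

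The same bound on $\widetilde D$, together with $\hat\epsilon\ge 2D_1/\sqrt K$ and $\sqrt K<\sqrt7\,z$, gives $\tfrac{4\sqrt{d-1}\,\widetilde D\,h}{\hat\epsilon}<\sqrt{42}\sqrt d(1+z)^2h<\min\{\epsilon\gamma_1,\gamma_2\}$. This settles alternative~(ii) with no rescaling. Your $\tfrac{\sqrt7}{2}$ bound would in fact just suffice for (ii), producing exactly the constant $7$ in the hypothesis on $h$. However, you left that step unchecked, and your proposed fallback of weakening the constant would not prove the statement as written.
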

 
\begin{proof}
Let $z:=D_1/\epsilon>1$ and $\eta=D_1/\sqrt K$.
Since $K=\lceil 6z^2\rceil$, we have $K-1<6z^2$, $\sqrt K\ge \sqrt6\,z$, and $K<7z^2$.
Consequently,
\begin{equation}
\label{eq:cor-fixed-Dtilde}
\widetilde D
\le
D_1+\frac{\eta}{2}(K-1)
<
\epsilon\left(z+\frac{\sqrt6}{2}z^2\right)
\le
\frac{\sqrt6}{2}\,\epsilon(1+z)^2.
\end{equation}
Also, by the choice of $T$, we have $2^{T-1}\ge 7\pi\sqrt d(1+z)^2$.
Combining this with \eqref{eq:cor-fixed-Dtilde} and $\sqrt{d-1}\le \sqrt d$ yields
\begin{equation}
\label{eq:cor-fixed-epshat}
\hat\epsilon
=
\frac{2D_1}{\sqrt K}
+
\frac{\pi\sqrt{d-1}\,\widetilde D}{2^{T-1}}
\le
\frac{2}{\sqrt6}\epsilon+\frac{\sqrt6}{14}\epsilon
< \epsilon.
\end{equation}
Moreover, since $\hat\epsilon\ge 2D_1/\sqrt K$, \eqref{eq:cor-fixed-Dtilde} and $K<7z^2$ imply $\frac{4\sqrt{d-1}\,\widetilde D\,h}{\hat\epsilon}
\le
\frac{2\sqrt d\,\widetilde D\,h\sqrt K}{D_1}
<
\sqrt{42}\sqrt d(1+z)^2h.$
Using the choice of $h$ and the fact that $\sqrt{42}<7$, we obtain
\begin{equation}
\label{eq:cor-fixed-radius}
\frac{4\sqrt{d-1}\,\widetilde D\,h}{\hat\epsilon}
<
\min\{\epsilon\gamma_1,\gamma_2\}.
\end{equation}

We now apply Theorem~\ref{thm:ngd-fixed-radius-main}.
If alternative~(i) holds, then \eqref{eq:cor-fixed-epshat} gives $\DeltaLS(\hat x)\le \epsilon$.
Otherwise, alternative~(ii) and \eqref{eq:cor-fixed-radius} yield some $k\in\{1,\dots,K\}$ such that $\hat x\preccurlyeq x_k$ and
$r_{x_k}<\min\{\epsilon\gamma_1,\gamma_2\}$. Since $x_k\in\mathcal C\setminus\X^\star$, Condition~\ref{cond:radius-growth} then implies $\DeltaLS(x_k)<\epsilon$.
By Lemma~\ref{lem:DeltaLS-monotone-best}, $\DeltaLS(\hat x)\le \DeltaLS(x_k)<\epsilon$.
Thus $\DeltaLS(\hat x)\le \epsilon$ in either case.

Finally, Theorem~\ref{thm:ngd-fixed-radius-main} gives
$N\le K\big((d-1)(T+3)+2\big)\le Kd(T+3)$.
Since $K\le 7(1+D_1^2/\epsilon^2)$ and $T+3
\le
\log_2\!(14\pi\sqrt d(1+\frac{D_1}{\epsilon})^2)+4
\le
\log_2\!(224\pi d(1+\frac{D_1}{\epsilon})^2)$,
we obtain \eqref{eq:cor-eps-N}.
\end{proof}

Corollary~\ref{cor:ngd-fixed-radius-eps-short} shows that, under the local growth condition of the regularity radius, the two-case guarantee in Theorem~\ref{thm:ngd-fixed-radius-main} can be converted into a pure level-set guarantee.
In particular, to obtain $\DeltaLS(\hat x)\le \epsilon$, it suffices to take 
$\widetilde O(dD_1^2/\epsilon^2)$ comparisons, where $\widetilde O(\cdot)$ hides only logarithmic factors of $d$, $D_1$, and $\epsilon^{-1}$.

\subsection{Adaptive normal direction descent method (adaNDD)}
\label{subsec:pf-ngd-selfcal}

Although we have already provided the NDD method in
Section~\ref{subsec:ngd-fixed-radius-alg}, it requires tuning the stepsize $\eta$ and the
comparison radius $h$. In particular, the parameter choices must depend on problem-related
quantities such as $\dist(x_1,\X^\star)$ and the local growth parameters $(\gamma_1,\gamma_2)$ from
Condition~\ref{cond:radius-growth}.
In this subsection, we present an adaptive alternative that is called adaNDD. It avoids explicit tuning by
combining: (i) Algorithm~\ref{alg:OSNE} implemented with the adaptive normal direction estimation routine from
Section~\ref{subsec:convex-lineseach}, and (ii) an adaptive normal direction descent method based on coin-betting.

The method maintains an internal sequence $\{z_k\}_{k\ge 1}\subseteq \X $ and the corresponding feasible projected iterates $x_k := \Pi_{\mathcal{C}}(z_k)$.
At each iteration $k$, we call Algorithm~\ref{alg:OSNE} with adaptive comparison radius at the feasible point $x_k$ and request a target normal direction accuracy $\epsilon_k$.
To achieve this accuracy, we set a particular planar bisection depth $T_k$ so that the returned direction $\widehat n_k$ satisfies
$\|\widehat n_k-n_{x_k}\|\le \epsilon_k$.
 
The number of comparisons used by the adaptive normal direction estimator can vary across iterations. To control the overall comparison complexity, we use a budgeted implementation in which the $k$-th call to Algorithm~\ref{alg:OSNE} with line search is allowed at most 
\begin{equation}
\label{eq:pf-budgeted-per-iter}
B_k
:=
2(d-1)\left\lceil \log_2\!\Big(\frac{7\sqrt{d-1}}{\epsilon_k}\Big)\right\rceil
+
2\left\lceil \log_2\!\Big(\frac{ h_0 d^3}{r^\star\, \epsilon_k}\Big)\right\rceil_{+}
+
16
+
4\left\lceil \log_2\!\Big(\frac{\pi^2 k^2}{6\delta}\Big)\right\rceil 
\end{equation}
comparisons, for target regularity radius $r^\star>0$ (as an alternative optimality criterion rather than the level-set optimality gap) and initial radius $h_0>0$.
If the budget is exhausted before the normal-estimation call terminates, the method stops
immediately and returns the current best-so-far point.

Overall, the adaNDD method is presented in Algorithm~\ref{alg:pf-ngd-osne}.
It does not require advance knowledge of $\dist(x_1,\X^\star)$ or   Condition~\ref{cond:radius-growth} parameters to choose a stepsize or a fixed comparison radius.
The user specifies only the initial comparison radius $h_0$, the target regularity radius $r^\star$, the confidence level $\delta$, and the maximal iteration budget $K$.

\begin{algorithm}[t]
\caption{Adaptive normal direction descent method (adaNDD)}
\label{alg:pf-ngd-osne}
\begin{algorithmic}[1] 
\State \textbf{Input:} initial point $x_1\in\mathcal{C}$, initial comparison radius $h_0>0$,
target regularity radius $r^\star>0$,  confidence level $\delta\in(0,1)$, horizon $K\in\mathbb{N}$.
\State Initialize shared radius $h\gets h_0$, $S_0\gets 0$, $A_0\gets 0$, $\hat x\gets x_1$, and $z_1 \gets x_1$.
\For{$k=1,2,\dots,K$}
    \State Set $\epsilon_{k}\gets \min\!\Big\{\frac12,\, \frac{1}{\sqrt{k}\,(1+\|x_k-x_1\|)}\Big\}$,
    $T_{k}\gets \left\lceil \log_2\!\Big(\frac{\pi\sqrt{d-1}}{2\epsilon_{k}}\Big)\right\rceil$ and $B_k$ as in \eqref{eq:pf-budgeted-per-iter}. 
    \State Compute normal direction estimate $\widehat n_k$ of $ x_k$ by  Algorithm~\ref{alg:OSNE} with adaptive comparison radius initialized at radius $h$, planar bisection depth $T_k$, and a budget $B_k$ of comparisons.\label{line:estimate-normal}
    \State \textbf{If} the budget is exhausted before line \ref{line:estimate-normal} terminates \textbf{then return} $\hat x$.
    \State \textbf{If} $z_k\in\mathcal{C}$ \textbf{then} $g_k\gets \widehat n_k$;  \textbf{else}  $s_k\gets \frac{z_k-x_k}{\|z_k- x_k\|}$ and $g_k\gets \widehat n_k+\max\{0,\,-\langle \widehat n_k, s_k\rangle\} \, s_k$.
    \State $S_k \gets S_{k-1} + g_k$.
    \State $A_k \gets A_{k-1} + \langle g_k,\,z_k-x_1\rangle$.
    \State $z_{k+1} \gets x_1 - \dfrac{1 - A_k}{k+1}\,S_k$.
    \State $x_{k+1} \gets \Pi_{\mathcal{C}}(z_{k+1})$.
    \State \textbf{If} $x_{k+1}\prec \hat x$ \textbf{then} set $\hat x\gets x_{k+1}$.
\EndFor
\State \textbf{return} $\hat x$.
\end{algorithmic}
\end{algorithm}

Algorithm~\ref{alg:pf-ngd-osne} is also a normal direction span-based method if the normal direction estimates are exact.
Indeed, if $z_t\in\mathcal C$ then $g_t=n_t$, while if $z_t\notin\mathcal C$ then
$g_t=n_t+\beta_t s_t$ with $s_t$ proportional to $z_t-x_t$.
Thus $g_t\in\operatorname{span}\{n_t,\ x_t-z_t\}$.
Since
$z_{t+1}=x_1-\frac{1-A_t}{t+1}\sum_{i=1}^t g_i$,
it follows that
$z_{t+1}\in x_1+\operatorname{span}\{n_1,\dots,n_t,\ x_1-z_1,\dots,x_t-z_t\}$,
which is precisely the span restriction in Definition~\ref{def:normal-direction-span-method}.
The scalar factor $\frac{1-A_k}{k+1}$ is updated online through the cumulative inner products $\langle g_i,\,z_i-x_1\rangle$.
This is exactly the KT coin-betting update (Algorithm~\ref{alg:kt-olo}) for online linear optimization written in anchored form; see, e.g., \cite{orabona2016coin}.

\begin{algorithm}[htbp]
\caption{KT coin-betting for OLO over a Hilbert space $H$ (\cite[Algorithm~1]{orabona2016coin})}
\label{alg:kt-olo}
\begin{algorithmic}[1]
\State \textbf{Input:} parameter $d_0>0$.
\For{$t=1,2,\dots$}
    \State Predict with
    $
    w_t \gets \frac{1}{t}\Big(d_0 + \sum_{i=1}^{t-1}\langle g_i, w_i\rangle\Big)\sum_{i=1}^{t-1} g_i
    $
    \Comment{empty sums for $t=1$ give $w_1=0$}
    \State Receive $g_t\in H$ such that $\|g_t\|\le 1$.
\EndFor
\end{algorithmic}
\end{algorithm}

The above KT coin-betting algorithm enjoys a sublinear regret guarantee in online linear optimization, which we restate below. 

\begin{lemma}[KT coin-betting regret bound {\cite[Corollary~5]{orabona2016coin}}]
\label{lem:kt-regret-orabona-pal}
Let $H$ be a Hilbert space and let $(g_t)_{t=1}^K\subseteq H$ satisfy $\|g_t\|\le 1$.
In online linear optimization, on each round $t$ the learner chooses $w_t\in H$, then observes $g_t$.
Then Algorithm~\ref{alg:kt-olo} guarantees that for all $u\in H$ and all $K\ge 1$,
\begin{equation}
\label{eq:kt-regret-cor5}
\sum_{t=1}^K \langle g_t,\,u-w_t\rangle
\ \le\
\|u\|\,
\sqrt{K\,
\ln\!\Bigl(1+\frac{24\,K^2\|u\|^2}{d_0^2}\Bigr)}
\;+\;
d_0\Bigl(1-\frac{1}{e\sqrt{\pi K}}\Bigr),
\qquad \forall d_0>0.
\end{equation}
\end{lemma}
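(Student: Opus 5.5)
This lemma is \cite[Corollary~5]{orabona2016coin}, and in the paper we would simply invoke it. A self-contained argument would follow the standard reduction from online linear optimization to coin betting, which I outline here.

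\textbf{Wealth and regret.} Define the learner's ``wealth'' after $t$ rounds as $\mathrm{W}_t:=d_0+\sum_{i=1}^t\langle g_i,w_i\rangle$, and set $\theta_t:=\sum_{i=1}^t g_i$. With this notation the prediction of Algorithm~\ref{alg:kt-olo} reads $w_t=\frac{1}{t}\mathrm{W}_{t-1}\theta_{t-1}$. So at each round the learner bets the fraction $\theta_{t-1}/t$ of its current wealth, and this fraction has norm at most $(t-1)/t<1$. The regret then rewrites as
\[
\sum_{t=1}^K\langle g_t,u-w_t\rangle=\langle\theta_K,u\rangle-\mathrm{W}_K+d_0 .
\]
Hence it suffices to prove a lower bound $\mathrm{W}_K\ge F_K(\|\theta_K\|)$ for a suitable even potential $F_K$. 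Given such a bound, Cauchy--Schwarz yields
\[
\text{Regret}\le \sup_{x\ge0}\bigl(\|u\|x-F_K(x)\bigr)+d_0=F_K^{*}(\|u\|)+d_0 .
\]

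\textbf{Choice of potential.} I would take the Krichevsky--Trofimov potential
\[
F_t(x)=\frac{d_0\,2^t\,\Gamma\!\big(\tfrac{t+1}{2}+\tfrac{x}{2}\big)\Gamma\!\big(\tfrac{t+1}{2}-\tfrac{x}{2}\big)}{\pi\, t!},
\]
for which $F_0(0)=d_0=\mathrm{W}_0$. The inductive step must show that for every $\|g_t\|\le1$,
\[
\mathrm{W}_{t-1}\Bigl(1+\tfrac{1}{t}\langle g_t,\theta_{t-1}\rangle\Bigr)\ge F_t(\|\theta_{t-1}+g_t\|)
\quad\text{whenever}\quad \mathrm{W}_{t-1}\ge F_{t-1}(\|\theta_{t-1}\|).
\]
This is the main obstacle. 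In one dimension it is exactly the KT identity: the KT betting fraction makes the inequality hold with equality for coins $\pm1$, and convexity (log-convexity of the Gamma factors) handles intermediate coins $c\in[-1,1]$. In a Hilbert space I would reduce to that case as follows. Decompose $g_t$ into its component along $\theta_{t-1}$ and an orthogonal part. The orthogonal part does not change the left-hand side, and it increases $\|\theta_t\|$ by an amount controlled via $\|\theta_{t-1}+g_t\|^2=\|\theta_{t-1}\|^2+2\langle g_t,\theta_{t-1}\rangle+\|g_t\|^2$. I would then check that the resulting worst case is still dominated by the 1-d extreme coins. This is precisely the content of Orabona--P\'al's reduction lemma, which I would follow closely.

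\textbf{Conjugate bound.} Finally I would lower bound the potential using Gamma-function (Stirling-type) inequalities:
\[
F_K(x)\ge \frac{d_0}{e\sqrt{\pi K}}\exp\!\Big(\frac{x^2}{2K}\Big).
\]
Computing the conjugate of $x\mapsto \beta\exp(x^2/(2K))$ with $\beta=d_0/(e\sqrt{\pi K})$ gives, via the standard Lambert-type estimate,
\[
\sup_x\big(\|u\|x-\beta e^{x^2/(2K)}\big)\le \|u\|\sqrt{K\ln\!\big(1+\tfrac{24K^2\|u\|^2}{d_0^2}\big)}-\beta .
\]
Adding $d_0$ produces the extra term $d_0\bigl(1-\frac{1}{e\sqrt{\pi K}}\bigr)$, which yields \eqref{eq:kt-regret-cor5}.
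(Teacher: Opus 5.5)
Your proposal is correct and matches the paper. The paper gives no proof of its own and simply imports Corollary~5 of \cite{orabona2016coin}, and your outline (wealth bounded below by the KT potential, then the conjugate bound for $\beta e^{x^2/(2K)}$ with $\beta=d_0/(e\sqrt{\pi K})$) follows that reference's argument. The one ingredient you leave implicit is in the Hilbert-space step, which you defer to the reference: it needs convexity of $y\mapsto F_t(\sqrt{y})$, i.e.\ Orabona--P\'al's ``excellent potential'' condition $xF_t''(x)\ge F_t'(x)$. This is strictly stronger than the log-convexity you invoke for one-dimensional coins, but it does hold for the KT potential.
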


With Lemma~\ref{lem:kt-regret-orabona-pal}, we can now derive an upper bound on the cumulative inner products
$\sum_{t=1}^K \langle g_t,\,z_t-x^\star\rangle$
for Algorithm~\ref{alg:pf-ngd-osne}. This idea has been also used by  \cite{orabona2023normalized} to design a parameter-free normalized gradient descent method for unconstrained convex optimization.

\begin{lemma}
\label{lem:kt-anchored-bound} 
Consider a run of Algorithm~\ref{alg:pf-ngd-osne} that completes $K$ iterations.
For every $x^\star\in\X$,  it holds that $\|g_t\| \le 1$ for all $t$ and
\begin{equation}
\label{eq:kt-anchored}
\sum_{t=1}^K \langle g_t,\,z_t-x^\star\rangle
\ \le\
\|x^\star-x_1\|\,
\sqrt{K\,
\ln\!\Bigl(1+24\,K^2\|x^\star-x_1\|^2\Bigr)}
\;+\; 1.
\end{equation}
\end{lemma}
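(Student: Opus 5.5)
The plan is to show that the sequence $w_t:=z_t-x_1$ is exactly the KT prediction sequence of Algorithm~\ref{alg:kt-olo} with $d_0=1$, fed with the losses $g_t$. We then apply Lemma~\ref{lem:kt-regret-orabona-pal} with comparator $u:=x^\star-x_1$. The argument has three steps: bound $\|g_t\|$, identify the recursion, and substitute into the regret bound.

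\textbf{Step 1 (norm bound).} If $z_t\in\mathcal C$, then $g_t=\widehat n_t$ is a unit vector, since Algorithm~\ref{alg:OSNE} returns $\pm v_d$. Otherwise $s_t$ is a unit vector and we write $c:=\langle \widehat n_t,s_t\rangle$.
\begin{itemize}
\item If $c\ge 0$, then $g_t=\widehat n_t$ and $\|g_t\|=1$.
\item If $c<0$, then $g_t=\widehat n_t-c\,s_t$, which is the orthogonal projection of $\widehat n_t$ onto $s_t^\perp$. Hence $\|g_t\|^2=1-c^2\le 1$.
\end{itemize}
This handles the first claim. It uses nothing about the accuracy of $\widehat n_t$, so it holds for every $x^\star$ and every run.

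\textbf{Step 2 (identification with KT).} Set $w_t:=z_t-x_1$, so that $w_1=0$ because $z_1=x_1$. By construction $A_k=\sum_{i\le k}\langle g_i,w_i\rangle$ and $S_k=\sum_{i\le k}g_i$, so the update reads
\[
w_{k+1}=-\frac{1-A_k}{k+1}\,S_k .
\]
Algorithm~\ref{alg:kt-olo} with $d_0=1$ instead predicts $\frac{1}{k+1}\bigl(1+\sum_{i\le k}\langle g_i,w_i\rangle\bigr)\sum_{i\le k}g_i$. The two formulas match after a sign flip. Define $\tilde g_t:=-g_t$ and run KT on $\tilde g_t$. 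Then $\sum\langle\tilde g_i,w_i\rangle=-A_k$ and $\sum\tilde g_i=-S_k$, so KT predicts $\frac{1}{k+1}(1-A_k)(-S_k)=w_{k+1}$. An induction on $k$ (base case $w_1=0$) therefore shows that the iterates coincide with KT on losses $\tilde g_t$, and $\|\tilde g_t\|\le1$.

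One point needs checking. The $g_t$ depend adaptively on past iterates, through $x_t=\Pi_{\mathcal C}(z_t)$ and the randomness of the estimator. Lemma~\ref{lem:kt-regret-orabona-pal} holds for arbitrary, even adversarial, sequences satisfying $\|\tilde g_t\|\le 1$, so this dependence is allowed. Getting this sign convention right is the main spot needing care.

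\textbf{Step 3 (conclusion).} The regret of KT on losses $\tilde g_t$ against comparator $u$ is
\[
\sum_t\langle\tilde g_t,u-w_t\rangle=\sum_t\langle g_t,w_t-u\rangle=\sum_t\langle g_t,z_t-x^\star\rangle .
\]
Applying \eqref{eq:kt-regret-cor5} with $d_0=1$ and $\|u\|=\|x^\star-x_1\|$ bounds this by
\[
\|x^\star-x_1\|\sqrt{K\ln\bigl(1+24K^2\|x^\star-x_1\|^2\bigr)}+\Bigl(1-\tfrac{1}{e\sqrt{\pi K}}\Bigr).
\]
The last term is at most $1$, which yields \eqref{eq:kt-anchored}.
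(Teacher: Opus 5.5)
Your proposal is correct and follows the paper's proof essentially step for step. It uses the same case analysis for $\|g_t\|\le 1$ (with $g_t$ the projection of $\widehat n_t$ onto $s_t^\perp$ when $\langle \widehat n_t,s_t\rangle<0$), the same identification of $w_t=z_t-x_1$ with the KT iterates fed $-g_t$ and $d_0=1$, and the same application of Lemma~\ref{lem:kt-regret-orabona-pal} with comparator $u=x^\star-x_1$, with your remark about adaptively chosen $g_t$ being a point the paper leaves implicit.
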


\begin{proof}
First, $\|g_k\|\le 1$ for every $k$.
If $z_k\in\mathcal C$, then $g_k=\widehat n_k$, so this follows from $\|\widehat n_k\|=1$.
If $z_k\notin\mathcal C$, then $g_k=\widehat n_k+\beta_k s_k$ with
$\beta_k=\max\{0,-\langle \widehat n_k,s_k\rangle\}$.
If $\langle \widehat n_k,s_k\rangle\ge0$, then $g_k=\widehat n_k$; otherwise
$g_k=\widehat n_k-\langle \widehat n_k,s_k\rangle s_k$, and hence
$\|g_k\|^2=\|\widehat n_k\|^2-\langle \widehat n_k,s_k\rangle^2\le1$.

Define $w_t:=z_t-x_1\in\X$ so that $w_1=0$, and set $r_t:=-g_t$.
Then the recursion on $(w_t)$ coincides with the KT update in
Algorithm~\ref{alg:kt-olo} run with parameter $d_0=1$ and feedback $r_t$.
Apply Lemma~\ref{lem:kt-regret-orabona-pal} in $H=\X$ with comparator $u:=x^\star-x_1$ to obtain
\[
\sum_{t=1}^K \langle r_t,\,u-w_t\rangle
\ \le\
\|u\|\,
\sqrt{K\,
\ln\!\Bigl(1+24\,K^2\|u\|^2\Bigr)}
\;+\;
\Bigl(1-\frac{1}{e\sqrt{\pi K}}\Bigr)
\ \le\
\|u\|\,
\sqrt{K\,
\ln\!\Bigl(1+24\,K^2\|u\|^2\Bigr)}
\;+\;1.
\]
Finally,
\[
\sum_{t=1}^K \langle r_t,\,u-w_t\rangle
=
\sum_{t=1}^K \langle -g_t,\,(x^\star-x_1)-(z_t-x_1)\rangle
=
\sum_{t=1}^K \langle g_t,\,z_t-x^\star\rangle,
\]
which yields \eqref{eq:kt-anchored}.
\end{proof}
 
Furthermore, Lemma~\ref{lem:pf-proj-bridge} below shows that each
$\langle g_k,\,z_k-x^\star\rangle$
provides an upper bound for
$\langle \widehat n_k,\,x_k-x^\star\rangle$. This step is critical in bridging the internal iterates $z_k$ and the projected iterates $x_k$.

\begin{lemma} 
\label{lem:pf-proj-bridge}
Assume $\mathcal{C}\subseteq\X$ is nonempty, closed, and convex.
Consider any iteration $k$ of Algorithm~\ref{alg:pf-ngd-osne} at which $g_k$ is defined.
For every $x^\star\in\mathcal{C}$, it holds that
\begin{equation}
\label{eq:pf-proj-bridge}
\langle \widehat n_k,\,x_k-x^\star\rangle
\ \le\
\langle g_k,\,z_k-x^\star\rangle.
\end{equation}
\end{lemma}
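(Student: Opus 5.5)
The proof will split on whether $z_k\in\mathcal C$. If $z_k\in\mathcal C$, then $x_k=\Pi_{\mathcal C}(z_k)=z_k$ and $g_k=\widehat n_k$ by the definition in Algorithm~\ref{alg:pf-ngd-osne}. In that case \eqref{eq:pf-proj-bridge} holds with equality.

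Assume now $z_k\notin\mathcal C$. Write $p:=\|z_k-x_k\|>0$, so $z_k=x_k+p\,s_k$. Set $\beta_k:=\max\{0,-\langle\widehat n_k,s_k\rangle\}\ge0$, so that $g_k=\widehat n_k+\beta_k s_k$. Expanding the right-hand side of \eqref{eq:pf-proj-bridge} gives
\[
\langle g_k,z_k-x^\star\rangle
=\langle \widehat n_k,x_k-x^\star\rangle
+p\langle\widehat n_k,s_k\rangle
+\beta_k\langle s_k,x_k-x^\star\rangle
+\beta_k p.
\]
It therefore suffices to show that the last three terms sum to something nonnegative.

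The key ingredient is the projection characterization for the closed convex set $\mathcal C$. For every $x^\star\in\mathcal C$ we have $\langle z_k-x_k,\,x^\star-x_k\rangle\le0$, i.e. $\langle s_k,x_k-x^\star\rangle\ge0$. Since $\beta_k\ge0$, the third term is nonnegative. It remains to check that $p\bigl(\langle\widehat n_k,s_k\rangle+\beta_k\bigr)\ge0$. By the definition of $\beta_k$, we have $\langle\widehat n_k,s_k\rangle+\beta_k=\max\{\langle\widehat n_k,s_k\rangle,0\}\ge0$, so this holds.

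The argument is purely elementary and I expect no real obstacle. The only points to verify carefully are the sign convention in the projection inequality and the exact form of the correction term in $g_k$, including that it equals zero when $\langle\widehat n_k,s_k\rangle\ge0$. Notably, the argument uses no property of $\widehat n_k$ beyond its being a vector, and it holds for every $x^\star\in\mathcal C$, not only optimal points.
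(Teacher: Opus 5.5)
Your proof is correct and follows the paper's argument essentially verbatim. You use the same case split on $z_k\in\mathcal C$, the projection inequality $\langle s_k,x_k-x^\star\rangle\ge0$, and the identity $\langle \widehat n_k,s_k\rangle+\beta_k=\max\{\langle \widehat n_k,s_k\rangle,0\}\ge0$; the only cosmetic difference is that you expand into four terms, whereas the paper groups them as $\langle g_k,x_k-x^\star\rangle+\alpha_k\langle g_k,s_k\rangle$.
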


\begin{proof} 
If $z_k\in\mathcal C$, then $x_k=z_k$ and $g_k=\widehat n_k$ by Algorithm~\ref{alg:pf-ngd-osne}, so \eqref{eq:pf-proj-bridge} holds with equality.
Assume now that $z_k\notin\mathcal C$. By Algorithm~\ref{alg:pf-ngd-osne}, $s_k=(z_k-x_k)/\|z_k-x_k\|$ and
$g_k=\widehat n_k+\beta_k s_k$, where $\beta_k:=\max\{0,-\langle \widehat n_k,s_k\rangle\}$.
Write $z_k=x_k+\alpha_k s_k$, where $\alpha_k:=\|z_k-x_k\|>0$.
Projection optimality gives $\langle s_k,z-x_k\rangle\le0$ for all $z\in\mathcal C$, and hence
$\langle s_k,x_k-x^\star\rangle\ge0$ for all $x^\star\in\mathcal C$.
Moreover, $\langle g_k,s_k\rangle=\langle \widehat n_k,s_k\rangle+\beta_k=\max\{\langle \widehat n_k,s_k\rangle,0\}\ge0$.
Therefore,
\[
\langle g_k,\,z_k-x^\star\rangle
=
\langle g_k,\,x_k-x^\star\rangle+\alpha_k\langle g_k,s_k\rangle
\ge
\langle \widehat n_k,\,x_k-x^\star\rangle,
\]
where the last inequality uses $g_k-\widehat n_k=\beta_k s_k$, $\beta_k\ge0$, and
$\langle s_k,x_k-x^\star\rangle\ge0$. 
This completes the proof.
\end{proof}

Additionally, it can be proven that the internal iterates $z_k$ (and $x_k=\Pi_{\mathcal C}(z_k)$) generated by Algorithm~\ref{alg:pf-ngd-osne} remain within a bounded distance from the initial point $x_1$. The boundedness of the iterates is used in analyzing the effect of the error of normal direction estimates.

\begin{lemma}
\label{lem:pf-iterate-radius}
Consider a run of Algorithm~\ref{alg:pf-ngd-osne} that completes $K$ iterations, and assume $x_k\notin\X^\star$ for all
$k\le K$ (otherwise the best-so-far output is already optimal).
Let $D_1:=\dist(x_1,\X^\star)$. Then for all $k\in\{1,\dots,K+1\}$,
\begin{equation}
\label{eq:pf-iterate-radius}
\|x_k-x_1\|\le \|z_k-x_1\|
\ \le\
1+(k-1)D_1+2\sqrt{k-1}.
\end{equation} 
\end{lemma}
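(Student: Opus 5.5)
The plan is to bound the anchored internal iterate directly from the closed-form update $z_{k+1}-x_1=-\frac{1-A_k}{k+1}S_k$. The first inequality $\|x_k-x_1\|\le\|z_k-x_1\|$ is immediate from nonexpansiveness of $\Pi_{\mathcal C}$ together with $x_1=\Pi_{\mathcal C}(x_1)$ (as $x_1\in\mathcal C$). For $k=1$ the claim is trivial since $z_1=x_1$. For $k\ge1$, Lemma~\ref{lem:kt-anchored-bound} gives $\|g_t\|\le1$, so $\|S_k\|\le k$. Hence
\[
\|z_{k+1}-x_1\|\le \frac{k}{k+1}\,|1-A_k|,
\]
and the whole task reduces to controlling the scalar ``wealth'' $W_k:=1-A_k$ from both sides.

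For the lower side I will show $W_k\ge0$ for all $k$, by the standard coin-betting induction. Writing $W_k=W_{k-1}-\langle g_k,z_k-x_1\rangle$ and $z_k-x_1=-\frac{W_{k-1}}{k}S_{k-1}$, we get
\[
W_k=W_{k-1}\Bigl(1+\tfrac1k\langle g_k,S_{k-1}\rangle\Bigr).
\]
Since $|\langle g_k,S_{k-1}\rangle|\le k-1<k$, the factor is positive, so $W_k>0$ and $|1-A_k|=1-A_k$.

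For the upper side I will expand, with $x^\star:=\Pi_{\X^\star}(x_1)$ so that $\|x_1-x^\star\|=D_1$:
\[
1-A_k=1+\sum_{t\le k}\langle g_t,x_1-x^\star\rangle-\sum_{t\le k}\langle g_t,z_t-x^\star\rangle
\le 1+kD_1-\sum_{t\le k}\langle g_t,z_t-x^\star\rangle .
\]
Lemma~\ref{lem:pf-proj-bridge} then gives $\langle g_t,z_t-x^\star\rangle\ge\langle\widehat n_t,x_t-x^\star\rangle$. Next I write $\widehat n_t=n_{x_t}+(\widehat n_t-n_{x_t})$. By Lemma~\ref{lem:DeltaLS-by-gap}, $\langle n_{x_t},x_t-x^\star\rangle\ge\DeltaLS(x_t)\ge0$; this uses $x_t\notin\X^\star$ and convexity. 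The accuracy guarantee of Theorem~\ref{thm:OSNE-convex-oracle} holds deterministically whenever the call completes, so $\|\widehat n_t-n_{x_t}\|\le\epsilon_t$. Combining these,
\[
\langle g_t,z_t-x^\star\rangle\ge-\epsilon_t\|x_t-x^\star\|\ge-\epsilon_t\bigl(\|x_t-x_1\|+D_1\bigr).
\]
The choice $\epsilon_t\le\frac{1}{\sqrt t(1+\|x_t-x_1\|)}$ is designed exactly to make $\epsilon_t\|x_t-x_1\|\le 1/\sqrt t$. Summing with $\sum_{t\le k}t^{-1/2}\le2\sqrt k$ gives the $2\sqrt k$ term in \eqref{eq:pf-iterate-radius}.

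The main obstacle is the leftover $\epsilon_t D_1$ contribution, which naively adds $2D_1\sqrt k$. I would first try to absorb it using the slack from the factor $\frac{k}{k+1}$ and from $\epsilon_t\le\frac{1}{\sqrt t(1+\|x_t-x_1\|)}$. If that fails, I would sharpen the gap bound to $\langle n_{x_t},x_t-x^\star\rangle\ge(1-\epsilon_t)\langle\cdot\rangle$-type estimates, using $\|x_t-x^\star\|\le\|x_t-x_1\|+D_1$ only on the error part. Failing exact constants, I would accept a bound of the form $1+kD_1+2\sqrt k(1+D_1)$, which is of the same order and suffices for the downstream analysis. Finally, I would plug the bound on $1-A_k$ back into the display above, with $k\mapsto k-1$, to conclude \eqref{eq:pf-iterate-radius}.
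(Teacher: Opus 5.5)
Your nonexpansiveness step, the reduction to the wealth $W_k=1-A_k$, and the positivity argument via $W_k=W_{k-1}\bigl(1+\tfrac1k\langle g_k,S_{k-1}\rangle\bigr)$ are correct. The paper instead gets $A_k\le 1$ from Lemma~\ref{lem:kt-anchored-bound} with comparator $x_1$; your multiplicative recursion is a valid self-contained alternative.

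There is a genuine gap in your upper bound on $1-A_k$. Because you anchor at $x^\star$, you first bound $\sum_t\langle g_t,x_1-x^\star\rangle\le kD_1$ by Cauchy--Schwarz. You then pay for $x_1-x^\star$ a second time inside the estimation error $\epsilon_t\|x_t-x^\star\|$. Your proposed repairs do not close this:
\begin{itemize}
\item The slack in $\epsilon_t\le\frac{1}{\sqrt t(1+\|x_t-x_1\|)}$ only reduces the per-step error to $\frac{1}{\sqrt t}+\frac{D_1-1}{\sqrt t(1+\|x_t-x_1\|)}$. When $D_1>1$ and the iterates stay near $x_1$, the excess sums to order $(D_1-1)\sqrt k$.
\item The factor $\frac{k}{k+1}$ frees only $(1-A_k)/(k+1)\approx D_1$.
\item The fallback $1+kD_1+2\sqrt k(1+D_1)$ is not the stated inequality. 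It would also not support the step $1+\|x_k-x_1\|\le K(D_1+3)$ used later in the proof of Theorem~\ref{thm:pf-oracle-two-case} when $D_1$ is large.
\end{itemize}

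The missing idea is to anchor at $x_1$ instead of $x^\star$. Lemma~\ref{lem:pf-proj-bridge} holds for every comparator in $\mathcal C$, in particular for $x_1$, so $\langle g_t,z_t-x_1\rangle\ge\langle \widehat n_t,x_t-x_1\rangle$. Now split $\widehat n_t=n_{x_t}+(\widehat n_t-n_{x_t})$, and route only the exact-normal part through $x^\star$:
\[
\langle n_{x_t},x_t-x_1\rangle=\langle n_{x_t},x_t-x^\star\rangle+\langle n_{x_t},x^\star-x_1\rangle\ge -D_1,
\qquad
\langle \widehat n_t-n_{x_t},x_t-x_1\rangle\ge-\epsilon_t\|x_t-x_1\|\ge-\tfrac{1}{\sqrt t}.
\]
This gives $\langle g_t,z_t-x_1\rangle\ge -D_1-1/\sqrt t$, hence $1-A_k\le 1+kD_1+2\sqrt k$. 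The error term now involves only $\|x_t-x_1\|$, which is exactly what the choice of $\epsilon_t$ was designed to control. With this change the rest of your argument (including your positivity step) yields \eqref{eq:pf-iterate-radius} after reindexing.
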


\begin{proof}
Fix $x^\star\in\X^\star$ such that $\|x^\star-x_1\|=D_1$ and write $w_k:=z_k-x_1$.
Recall from Algorithm~\ref{alg:pf-ngd-osne} that $w_{k+1}=-\frac{1-A_k}{k+1}S_k$, where $S_k=\sum_{i=1}^k g_i$ and $A_k=\sum_{i=1}^k \langle g_i,\,w_i\rangle$.
By Lemma~\ref{lem:kt-anchored-bound} with comparator $x^\star=x_1$ and horizon $k$, we have $A_k\le 1$, and hence $(1-A_k)/(k+1)\ge 0$.
Moreover, $\|g_i\|\le 1$ implies $\|S_k\|\le k$. Therefore
\begin{equation}
\label{eq:pf-wk-by-scalar}
\|w_{k+1}\|
=
\frac{1-A_k}{k+1}\,\|S_k\|
\le
\frac{1-A_k}{k+1}\,k
\le
1-A_k.
\end{equation}

We now upper bound $1-A_k$.
For each $i\le k$, let $n_i:=n_{x_i}$.

First consider the case $z_i\in\mathcal{C}$.
Then $x_i=z_i$, $g_i=\widehat n_i$, and
\begin{equation}
\label{eq:pf-inner-prod-decomp}
\langle g_i,\,w_i\rangle
=
\langle n_i,\,z_i-x_1\rangle
+
\langle \widehat n_i-n_i,\,w_i\rangle.
\end{equation}
Since $x^\star\in\X^\star\subseteq \calS_{x_i}$ and $n_i$ is an outward normal at $x_i\in\partial\calS_{x_i}$, the supporting-halfspace property yields $\langle n_i,\,x_i-x^\star\rangle\ge 0$. Because $z_i=x_i$ in this case,
\begin{equation}
\label{eq:pf-inner-prod-decomp-2}
\langle n_i,\,z_i-x_1\rangle
=
\langle n_i,\,x_i-x^\star\rangle+\langle n_i,\,x^\star-x_1\rangle
\ge
-D_1.
\end{equation}
Also, Theorem~\ref{thm:OSNE-convex-oracle} and the definition of $\epsilon_i$ in Algorithm~\ref{alg:pf-ngd-osne} give $\|\widehat n_i-n_i\|\le \epsilon_i\le \frac{1}{\sqrt{i}(1+\|x_i-x_1\|)}=\frac{1}{\sqrt{i}(1+\|w_i\|)}$, so by Cauchy--Schwarz,
\begin{equation}
\label{eq:pf-inner-prod-decomp-3}
\langle \widehat n_i-n_i,\,w_i\rangle
\ge
-\|\widehat n_i-n_i\|\,\|w_i\|
\ge
-\frac{1}{\sqrt{i}}.
\end{equation}
Combining \eqref{eq:pf-inner-prod-decomp}, \eqref{eq:pf-inner-prod-decomp-2}, and \eqref{eq:pf-inner-prod-decomp-3} yields $\langle g_i,\,w_i\rangle \ge -D_1-\frac{1}{\sqrt{i}}$.

Now consider the case $z_i\notin\mathcal{C}$.
Let $s_i=(z_i-x_i)/\|z_i-x_i\|$, set $\beta_i=\max\{0,\,-\langle \widehat n_i,s_i\rangle\}$, and define $g_i=\widehat n_i+\beta_i s_i$.
By projection optimality and $x_1\in\mathcal{C}$, we have $\langle s_i,\,x_i-x_1\rangle\ge 0$ and $\langle g_i,s_i\rangle=\max\{\langle \widehat n_i,s_i\rangle,0\}\ge 0$.
Since $w_i=z_i-x_1=(x_i-x_1)+\|z_i-x_i\| s_i$, it follows that
\begin{equation}
\label{eq:pf-inner-prod-proj-case}
\langle g_i,\,w_i\rangle
=
\langle g_i,\,x_i-x_1\rangle+\|z_i-x_i\|\,\langle g_i,s_i\rangle
\ge
\langle \widehat n_i,\,x_i-x_1\rangle.
\end{equation}
Using $\langle \widehat n_i,\,x_i-x_1\rangle=\langle n_i,\,x_i-x_1\rangle+\langle \widehat n_i-n_i,\,x_i-x_1\rangle$, together with \eqref{eq:pf-inner-prod-decomp-2} and the same Cauchy--Schwarz bound as above, we obtain $\langle \widehat n_i,\,x_i-x_1\rangle\ge -D_1-\frac{1}{\sqrt{i}}$. Combining this with \eqref{eq:pf-inner-prod-proj-case} yields $\langle g_i,\,w_i\rangle \ge -D_1-\frac{1}{\sqrt{i}}$.

Hence $\langle g_i,\,w_i\rangle\ge -D_1-\frac{1}{\sqrt{i}}$ for every $i\le k$. Therefore
$
1-A_k
=
1-\sum_{i=1}^k \langle g_i,\,w_i\rangle
\le
1+kD_1+\sum_{i=1}^k \frac{1}{\sqrt{i}}
\le
1+kD_1+2\sqrt{k}.
$
Plugging this into \eqref{eq:pf-wk-by-scalar} gives $\|w_{k+1}\| \le 1+kD_1+2\sqrt{k}$.
Reindexing yields the second inequality of \eqref{eq:pf-iterate-radius}.
Finally, by nonexpansiveness of projection,
$
\|x_k-x_1\|
=
\|\Pi_{\mathcal C}(z_k)-\Pi_{\mathcal C}(x_1)\|
\le
\|z_k-x_1\|,
$
which gives the first inequality of \eqref{eq:pf-iterate-radius}.
\end{proof}

With the above lemmas in place, we are ready to state the main computational guarantees of the adaNDD method.

\begin{theorem}[Adaptive normal direction descent method]
\label{thm:pf-oracle-two-case}
Assume the preference relation $\preccurlyeq$ is convex and plateau-free.
Assume $\mathcal{C}\subseteq\X$ is nonempty, closed, and convex, and $d\ge 2$.
Assume furthermore that the preference relation is $r_x$-regular for every
$x\in\mathcal{C}\setminus\X^\star$ with $r_x>0$.
Run Algorithm~\ref{alg:pf-ngd-osne} for $K\ge 1$ iterations, and write $D_1:=\dist(x_1,\X^\star)$. The total number of comparisons has the following upper bound:
\begin{equation}
\label{eq:pf-total-comparisons-budgeted}
\begin{aligned}
N
\ \le\
2K d \, \Big\lceil  \log_2 \!\Big( 7 d^{1/2}  K^{3/2} (D_1+3)\Big)\Big\rceil
\;+\; 2K\left( \left\lceil \log_2\Big(\tfrac{ h_0 d^{5/2}}{r^\star}\Big) \right\rceil_{+}
\;+\; 2\left\lceil  \log_2\!\left(\tfrac{2K^2}{\delta}\right)\right\rceil + 9 
\right).
\end{aligned}
\end{equation} 
If the method completes $K$ iterations, then its output $\hat x$
satisfies
\begin{equation}
\label{eq:pf-ngd-osne-bound}
\DeltaLS(\hat x)
\ \le\
\frac{D_1\sqrt{\ln\!\bigl(1+24\,K^2 D_1^2\bigr)}+2D_1+3}{\sqrt{K}}.
\end{equation}
If the method stops early because the budget is exhausted at some iteration $\tau$ during the
normal-estimation call at $x_\tau$, then for every $\delta\in(0,1)$, with probability at
least $1-\delta$ (over the internal randomness of Algorithm~\ref{alg:OSNE}),
\begin{equation}
\label{eq:pf-expensive-implies-curv}
r_{x_\tau}\ <\ r^\star\,.
\end{equation}  
If, in addition, Condition~\ref{cond:radius-growth} holds with parameters $(\gamma_1,\gamma_2)$
and $r^\star \le \gamma_2$, then on the same event, 
\begin{equation}
\label{eq:pf-expensive-implies-close}
\DeltaLS(\hat x)
\ \le\
\frac{r^\star}{\gamma_1} .
\end{equation}
\end{theorem}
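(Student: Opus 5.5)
The proof has three parts: the comparison count, the accuracy bound when all $K$ iterations complete, and the small-radius certificate when the budget runs out.

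\textbf{Comparison count.} Iteration $k$ uses at most $B_k$ comparisons for normal estimation, plus one comparison for the best-so-far update. So it suffices to bound $B_k+1$ uniformly in $k\le K$ and multiply by $K$.

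By Lemma~\ref{lem:pf-iterate-radius}, $\|x_k-x_1\|\le 1+(k-1)D_1+2\sqrt{k-1}$. Hence
\[
1/\epsilon_k\le \max\{2,\sqrt k\,(2+kD_1+2\sqrt k)\}\le K^{3/2}(D_1+3)\cdot c
\]
for a small absolute constant $c$. (If some earlier iterate is optimal, we stop counting since $\hat x$ is already optimal.) Substituting into \eqref{eq:pf-budgeted-per-iter}, the first term gives $2(d-1)\lceil\log_2(7\sqrt d\,K^{3/2}(D_1+3))\rceil$, after absorbing constants. The second term splits as $\lceil\log_2(h_0d^{5/2}/r^\star)\rceil_+$ plus $\log_2(\sqrt d/\epsilon_k)$, and the latter is absorbed into the first logarithm, which is why the coefficient becomes $2Kd$ rather than $2K(d-1)$. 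The third term uses $\pi^2k^2/6\le 2K^2$. Collecting constants ($16+1$ together with ceiling slack, giving $2\cdot 9$) yields \eqref{eq:pf-total-comparisons-budgeted}. This is routine but fiddly bookkeeping.

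\textbf{Accuracy when all $K$ iterations complete.} Assume no $x_k$ is optimal, fix $x^\star$ with $\|x^\star-x_1\|=D_1$, and let $n_k:=n_{x_k}$. By Lemmas~\ref{lem:DeltaLS-monotone-best} and \ref{lem:DeltaLS-by-gap},
\[
K\,\DeltaLS(\hat x)\le\sum_k\langle n_k,x_k-x^\star\rangle=\sum_k\langle\widehat n_k,x_k-x^\star\rangle+\sum_k\langle n_k-\widehat n_k,x_k-x^\star\rangle .
\]
For the first sum, Lemma~\ref{lem:pf-proj-bridge} followed by Lemma~\ref{lem:kt-anchored-bound} gives the bound $D_1\sqrt{K\ln(1+24K^2D_1^2)}+1$.

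For the error sum, completion of the estimation call means Theorem~\ref{thm:OSNE-convex-oracle}'s accuracy guarantee applies, since accuracy is deterministic given termination. So $\|n_k-\widehat n_k\|\le\epsilon_k\le 1/(\sqrt k(1+\|x_k-x_1\|))$. Combined with $\|x_k-x^\star\|\le\|x_k-x_1\|+D_1$, each error term is at most $(1+D_1)/\sqrt k$, and summing gives at most $2\sqrt K(1+D_1)$. Dividing by $K$ produces \eqref{eq:pf-ngd-osne-bound}; the constants match since $(1+2+2D_1)/\sqrt K$ accounts for the $2D_1+3$, using $1\le\sqrt K$.

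\textbf{Early stop implies small radius.} This is the main step: we argue by contrapositive. Apply the high-probability bound \eqref{eq:Nhpp} of Theorem~\ref{thm:OSNE-convex-oracle} at iteration $k$ with $\delta_k:=6\delta/(\pi^2k^2)$; note that $\sum_k\delta_k\le\delta$. By a union bound over $k$ (conditioning on the past, since fresh randomness is drawn in each call), with probability at least $1-\delta$, every call whose point satisfies $r_{x_k}\ge r^\star$ uses at most the right side of \eqref{eq:Nhpp} with $r_x$ replaced by $r^\star$, which is exactly $B_k$.

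One subtlety must be checked: the shared radius entering call $k$ is some $h\le h_0$, not $h_0$ itself. Since \eqref{eq:Nhpp} is monotone in the initial radius, the bound with $h_0$ still applies. Therefore, on this event, exhausting the budget at $\tau$ forces $r_{x_\tau}<r^\star$.

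Finally, under Condition~\ref{cond:radius-growth}, we have $\min\{\gamma_1\DeltaLS(x_\tau),\gamma_2\}\le r_{x_\tau}<r^\star\le\gamma_2$. The minimum must therefore be the first term, so $\DeltaLS(x_\tau)<r^\star/\gamma_1$. Since $\hat x\preccurlyeq x_\tau$ (ensured by the best-so-far rule, or $x_\tau=\hat x$), Lemma~\ref{lem:DeltaLS-monotone-best} gives \eqref{eq:pf-expensive-implies-close}.
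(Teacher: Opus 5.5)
Your proposal is correct and follows the paper's proof essentially step for step: the same split into the KT term (via Lemma~\ref{lem:pf-proj-bridge} and Lemma~\ref{lem:kt-anchored-bound}) plus the estimation-error term, the same budget bookkeeping through Lemma~\ref{lem:pf-iterate-radius}, and the same union bound with $\delta_k=6\delta/(\pi^2k^2)$ applied to \eqref{eq:Nhpp} with $r_{x_k}\ge r^\star$. Two small notes on the details. Your observation that the shared radius entering call $k$ is at most $h_0$, so \eqref{eq:Nhpp} still applies by monotonicity, makes explicit a point the paper leaves implicit. In the count, use the lemma's exact bound $1+(k-1)D_1+2\sqrt{k-1}$ rather than $2+kD_1+2\sqrt{k}$; this gives $\epsilon_k^{-1}\le K^{3/2}(D_1+3)$ with constant exactly $1$, which is what lets the factor $7$ survive for all $d$.
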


\begin{proof} 
We first show the proof of \eqref{eq:pf-ngd-osne-bound} and \eqref{eq:pf-total-comparisons-budgeted}, the case the method completes $K$ iterations.
 
If there exists some $k\le K+1$ such that $ x_k\in\X^\star$, then the best-so-far rule implies
$\hat x\in\X^\star$ and hence $\DeltaLS(\hat x)=0$, so \eqref{eq:pf-ngd-osne-bound} holds trivially.
We therefore assume $ x_k\notin\X^\star$ for all $k\le K+1$, so that $n_k:=n_{ x_k}$ exists and is unique.

Fix $x^\star\in\X^\star$ such that $\|x^\star-x_1\|=D_1$.
By Lemma~\ref{lem:DeltaLS-by-gap} and
Lemma~\ref{lem:DeltaLS-monotone-best},
\begin{equation}
\label{eq:pf-decomp}
\begin{aligned}
\DeltaLS(\hat x)
\ & \le\
\min_{1\le k\le K}\DeltaLS( x_k)
\ \le\
\frac{1}{K}\sum_{k=1}^K \DeltaLS( x_k)
\ \le\
\frac{1}{K}\sum_{k=1}^K \langle n_k,\, x_k-x^\star\rangle \\
\ & = \ \frac{1}{K}\underbrace{ \sum_{k=1}^K \langle \widehat n_k,\, x_k-x^\star\rangle}_{\text{denoted by $\mathrm{I}$}}
\;-\;\frac{1}{K}
\underbrace{\sum_{k=1}^K \langle \widehat n_k-n_k,\, x_k-x^\star\rangle}_{\text{denoted by $\mathrm{II}$}}
\end{aligned}
\end{equation} 
Below we derive an upper bound on $\mathrm{I}$ and a lower bound on $\mathrm{II}$ separately, and then combine them to obtain \eqref{eq:pf-ngd-osne-bound}.
 
By Lemma~\ref{lem:pf-proj-bridge}, we have
$\langle \widehat n_k,\, x_k-x^\star\rangle\le \langle g_k,\,z_k-x^\star\rangle$.
Summing and applying Lemma~\ref{lem:kt-anchored-bound} yields
\begin{equation}\label{eq:pf-kt-term}
\mathrm{I} = \sum_{k=1}^K \langle \widehat n_k,\, x_k-x^\star\rangle
\ \le\
\sum_{k=1}^K \langle g_k,\,z_k-x^\star\rangle
\ \le\
D_1\sqrt{K\ln\!\bigl(1+24\,K^2D_1^2\bigr)}+1.
\end{equation} 
 
By Theorem~\ref{thm:OSNE-convex-oracle} and the choice of $(\epsilon_k,T_k)$ in Algorithm~\ref{alg:pf-ngd-osne}, 
$\|\widehat n_k-n_k\| \le \epsilon_k \le \frac{1}{\sqrt{k}\,(1+\|x_k-x_1\|)}$.
Note that $\| x_k-x^\star\|\le \| x_k-x_1\|+\|x_1-x^\star\| \le \|x_k-x_1\|+D_1$, we obtain
\[
\big|\langle \widehat n_k-n_k,\, x_k-x^\star\rangle\big|
\le
\|\widehat n_k-n_k\|\,\| x_k-x^\star\|
\le
\frac{\|x_k-x_1\|+D_1}{\sqrt{k}\,(1+\|x_k-x_1\|)}
\le
\frac{1+D_1}{\sqrt{k}}.
\]
Summing over $k$ and using $\sum_{k=1}^K k^{-1/2}\le 2\sqrt{K}$ yields $|\mathrm{II}|\le 2\sqrt{K}(1+D_1)$.
Combining \eqref{eq:pf-decomp}, \eqref{eq:pf-kt-term}, and  $|\mathrm{II}|\le 2\sqrt{K}(1+D_1)$ gives  \eqref{eq:pf-ngd-osne-bound}.

Now we count the number of comparisons used by the method.
Each completed iteration uses one additional comparison to update the best-so-far point, and the
$k$-th normal-estimation call is allocated budget $B_k$. Therefore, $N \le K+\sum_{k=1}^K B_k$. For each $k\le K$, Lemma~\ref{lem:pf-iterate-radius} yields
$1+\|x_k-x_1\| \le K\big(\dist(x_1,\X^\star)+3\big)$.
Hence $\epsilon_k^{-1} \le K^{3/2}\big(\dist(x_1,\X^\star)+3\big)$ and $\frac{7\sqrt{d-1}}{\epsilon_k} \le 7d^{1/2}K^{3/2}\big(\dist(x_1,\X^\star)+3\big)$
and also
\[
\frac{ h_0 d^3}{r^\star\,\epsilon_k}
\ \le\
\frac{ h_0 d^3}{r^\star}\,K^{3/2}\big(\dist(x_1,\X^\star)+3\big)
\ \le\
(h_0 d^{5/2}/r^\star)\cdot \big( d^{1/2} K^{3/2}\big(\dist(x_1,\X^\star)+3\big) \big) .
\]
Moreover, since $k\le K$ and $\pi^2/6\le 2$,
$
\log_2\!\Big(\frac{\pi^2 k^2}{6\delta}\Big)
 \le
\log_2\!\Big(\frac{2K^2}{\delta}\Big).
$
Substituting these bounds into \eqref{eq:pf-budgeted-per-iter} and using
$\lceil \log_2(ab)\rceil_{+}\le \lceil \log_2(a)\rceil_{+}+\lceil \log_2(b)\rceil$ (for $b\ge 1$)
yields, for all $k\le K$,
\[
B_k
\ \le\
2d\Big\lceil \log_2\!\Big(7d^{1/2} K^{3/2}\big(\dist(x_1,\X^\star)+3\big)\Big)\Big\rceil
+
2\Big\lceil \log_2(\tfrac{h_0 d^{5/2}}{r^\star})\Big\rceil_{+}
+
16
+
4\Big\lceil \log_2\!\Big(\frac{2K^2}{\delta}\Big)\Big\rceil.
\]
Therefore, we obtain \eqref{eq:pf-total-comparisons-budgeted}.
If the method stops early at iteration $\tau\le K$, then it uses at most
$\tau+\sum_{k=1}^{\tau}B_k\le K+\sum_{k=1}^{K}B_k$ comparisons, so \eqref{eq:pf-total-comparisons-budgeted} also holds.

\medskip
Next we show the proof of   \eqref{eq:pf-expensive-implies-curv} and \eqref{eq:pf-expensive-implies-close}. Now we consider the case if the method stops early at iteration $\tau\le K$.
Fix $\delta\in(0,1)$ and define $\delta_k:=\frac{6\delta}{\pi^2k^2}$ so that
$\sum_{k=1}^{\infty}\delta_k=\delta$.
Condition on the history up to iteration $k$ (so $x_k$ and $z_k$ are fixed),
and let $N_k$ denote the number of pairwise comparisons that the unbudgeted call of
Algorithm~\ref{alg:OSNE} at $x_k$ would require before termination.
By Theorem~\ref{thm:OSNE-convex-oracle} (applied at $ x_k$ with target accuracy $\epsilon_k$ and confidence $\delta_k$),
whenever $ x_k\notin\X^\star$,
\begin{equation}
\label{eq:pf-one-step-tail}
\mathbb{P}\!\left(
N_k>\bar N_k\ \Big|\ x_k
\right)
\ \le\ \delta_k,
\end{equation}
where
\[
\bar N_k
:=
2(d-1)\left\lceil \log_2\!\Big(\frac{7\sqrt{d-1}}{\epsilon_k}\Big)\right\rceil
+
2\left\lceil \log_2\!\Big(\frac{h_0 d^3}{r_{ x_k}\,\epsilon_k}\Big)\right\rceil_{+}
+
16
+
4\left\lceil \log_2\!\Big(\frac{1}{\delta_k}\Big)\right\rceil.
\]
Now assume $r_{x_k}\ge r^\star$, then we have $\frac{h_0 d^3}{r_{x_k}\,\epsilon_k}  \le \frac{h_0 d^3}{r^\star \epsilon_k}$. 
By monotonicity of $z\mapsto \lceil\log_2(z)\rceil_{+}$, this implies $\bar N_k\le B_k$,
with $B_k$ defined in \eqref{eq:pf-budgeted-per-iter}. Therefore, $\mathbb{P}\!\left(N_k>B_k | x_k\right)  \le \delta_k$  whenever $r_{ x_k}> r^\star$.

A union bound over $k\le K$ shows that, except on an event of probability at most $\delta$,
for every $k\le K$ we have the implication: if $N_k>B_k$ then $r_{ x_k}< r^\star$.
In particular, if the algorithm stops early at iteration $\tau\le K$, then necessarily $N_\tau>B_\tau$,
and hence \eqref{eq:pf-expensive-implies-curv} holds.

Finally, assume Condition~\ref{cond:radius-growth} holds with parameters $(\gamma_1,\gamma_2)$ and
$r^\star\le \gamma_2$. On the same event we have $r_{ x_\tau}<r^\star \le\ \gamma_2$, so \eqref{eq:radius-growth} implies $\DeltaLS(x_\tau) \le  \frac{r_{x_\tau}}{\gamma_1} < \frac{r^\star}{\gamma_1}$.
Since $\hat x$ is best-so-far and $\DeltaLS(\cdot)$ is monotone under preference improvement
(Lemma~\ref{lem:DeltaLS-monotone-best}), we conclude
$\DeltaLS(\hat x)\le \DeltaLS( x_\tau)\le \frac{r^\star}{\gamma_1}$, which is \eqref{eq:pf-expensive-implies-close}.
\end{proof}

The theorem remains a two-case guarantee.
If the method completes $K$ iterations, then \eqref{eq:pf-ngd-osne-bound} gives a direct level-set optimality gap guarantee.
If it stops early, then \eqref{eq:pf-expensive-implies-curv} certifies, with probability at least $1-\delta$, that the current iterate has regularity radius smaller than $r^\star$.
This high-probability statement is inherited from the adaptive normal direction estimator.

Unlike NDD (Algorithm \ref{alg:fixed-ngd-osne}), adaNDD (Algorithm \ref{alg:pf-ngd-osne}) does not require prior knowledge of problem-dependent quantities such as the distance to the optimal solutions $\dist(x_1,\X^\star)$, the diameter of $\mathcal{C}$, or the local growth condition parameters. 
Users only need to choose the initial comparison radius $h_0$, the target regularity radius $r^\star$, and the confidence level $\delta$. 
In particular, no stepsize or comparison radius needs to be tuned.
The level-set guarantee \eqref{eq:pf-ngd-osne-bound} is unaffected by the choice of $(h_0,r^\star,\delta)$, while the total number of comparison bound \eqref{eq:pf-total-comparisons-budgeted} depends on $(h_0,r^\star,\delta)$ only through the logarithmic factors, which do not scale with $d$, and thus are often negligible compared with the leading term $O(Kd\log(Kd(D_1+3)))$ of \eqref{eq:pf-total-comparisons-budgeted}.

The guarantee \eqref{eq:pf-ngd-osne-bound} of completing $K$ iterations holds for all $K\ge 1$. The bound automatically improves as $K$ grows, without any need to preselect a target accuracy $\epsilon$. Compared with NDD with exact normal directions (see Remark \ref{rem:ngd-fixed-radius-complexity-oracle}), adaNDD achieves a similar dependence of \eqref{eq:pf-ngd-osne-bound} in $K$ with only an additional logarithmic factor.

Condition~\ref{cond:radius-growth} is used only after this point, to convert the small regularity radius certificate into the level-set optimality gap guarantee \eqref{eq:pf-expensive-implies-close}.
Choosing the parameter $r^\star$ smaller affects the comparison complexity \eqref{eq:pf-total-comparisons-budgeted} by a logarithmic factor. If Condition~\ref{cond:radius-growth} holds with parameters $(\gamma_1,\gamma_2)$, then choosing 
any $r^\star  \le \min\!\left\{ \epsilon \gamma_1,\, \gamma_2\right\}$ is enough to ensure $\DeltaLS(\hat{x})\le \epsilon$ with probability at least $1-\delta$ on the event of stopping early.

\subsection{Lower bound of normal direction span-based methods}
\label{subsec:ngd-exact-lower-bound}

The convergence rate of the level-set optimality gap in NDD and adaNDD scale as $O(\dist(x_1,\X^\star)/\sqrt{K})$ (or with a logarithmic factor) in the number of normal direction estimates $K$ (see Remark \ref{rem:ngd-fixed-radius-complexity-oracle} and Theorem \ref{thm:pf-oracle-two-case}). We now show that this dependence on $K$ is unimprovable for the broad class of normal direction span-based methods.  We will show that there exists a family of problems of the form \eqref{eq:main-optimization-problem} in which $\mathcal{C}$ is nonempty, closed and convex, and the preference relation $\preccurlyeq$ is convex, plateau-free and regular at every $x\in \mathcal{C}\setminus\X^\star$, such that no normal direction span-based method can guarantee $\DeltaLS(\hat x) <  \frac{3 \dist(x_1,\X^\star)}{4\sqrt{K+1}}$ in $K$ iterations for all these problems.

The preference relation is realized by a function $f$ defined below. We first let
\begin{equation}\label{defh}
 h(x):=\max_{1\le i\le d} x_i.
\end{equation} 
And then let $\rho:\R^d\to\R$ be a nonnegative $C^{\infty}$ function satisfying
\begin{equation}\label{defrho}
\int_{\R^d}\rho(z)\,dz=1
\quad\text{and}\quad
\{z\in\R^d:\rho(z)>0\}
\subseteq
\bigl\{z\in\R^d:\ 0<z_d<z_{d-1}<\cdots<z_1<1\bigr\}.
\end{equation} 
Set $\chi:=D_1/(4\sqrt{d})$ and define $\rho_\chi(z):=\chi^{-d}\rho(z/\chi)$, so that $\int_{\R^d}\rho_\chi(z)\,dz=1$. 
Then
$\{z:\rho_\chi(z)>0\}\subseteq
\{z\in\R^d:0<z_d<z_{d-1}<\cdots<z_1<\chi\}$.
With $h$ and $\rho_\chi$ defined, let $f$ be the convolution of $h$ with $\rho_\chi$, namely,
\begin{equation}\label{deff}
 f(x):=\int_{\R^d} h(x+z)\rho_\chi(z)\,dz.
\end{equation}
For a normal direction span-based method with $K$ iterations, we consider the problem below.
\begin{problem}\label{example:hard-instance}
    Let the problem be of the form \eqref{eq:main-optimization-problem}, in which $d = K+1$ and $\mathcal{C} = B(0,D_1):=\{x\in\R^d:\|x\|_2\le D_1\}$, and the preference relation $\preccurlyeq$ is realized by $f$ in \eqref{deff}, namely, $x\preccurlyeq y$ if and only if $f(x)\le f(y)$. Let the initial iterate $x_1$ be the origin $0$ of $\R^d$.
\end{problem}

There are some basic regularity properties of $f$.
\begin{lemma}\label{lm:property-of-f}
    Consider $f$ defined in \eqref{deff}. The function $f$ is convex and there exists $L>0$ such that $f$ is $L$-smooth on $\mathcal{C}$.
    For every point $x\in\mathcal{C}$, $\frac{1}{\sqrt{d}}\le \|\nabla f(x)\|_2\le 1$.
\end{lemma}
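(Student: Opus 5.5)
Convexity comes first and is direct. Each $x\mapsto h(x+z)$ is a maximum of linear functions, hence convex. Since $\rho_\chi\ge 0$ and integrates to one, $f$ is a mixture of convex functions and is therefore convex. Smoothness comes from moving derivatives onto the mollifier. Substitute $w=x+z$ to get $f(x)=\int h(w)\rho_\chi(w-x)\,dw$. Because $\rho_\chi$ is $C^\infty$ with compact support and $h$ is locally bounded (indeed $1$-Lipschitz in $\ell_\infty$, hence Lipschitz in $\ell_2$), we may differentiate under the integral sign. This shows $f\in C^\infty$ and gives $\nabla^2 f(x)=\int h(w)\nabla^2\rho_\chi(w-x)\,dw$. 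The entries of this Hessian are continuous in $x$, and $\mathcal C=B(0,D_1)$ is compact, so $\|\nabla^2 f\|$ is bounded on $\mathcal C$ (in fact on any compact neighborhood). The mean value theorem along segments, which stay in the convex set $\mathcal C$, then gives $L$-smoothness on $\mathcal C$ for some finite $L>0$. We need no explicit value of $L$.

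For the gradient bounds, use the other representation. The function $h$ is differentiable almost everywhere with $\nabla h(y)=e_{i(y)}$, where $i(y)$ is the index of the unique maximal coordinate; ties form a null set. Since $h$ is Lipschitz, dominated convergence gives
\[
\nabla f(x)=\int \nabla h(x+z)\rho_\chi(z)\,dz=\sum_{i=1}^d p_i(x)\,e_i,\qquad p_i(x):=\int_{\{z:\ i(x+z)=i\}}\rho_\chi(z)\,dz.
\]
Here $p_i\ge 0$ and $\sum_i p_i=1$, so $\nabla f(x)$ lies in the probability simplex. The bounds then follow from elementary inequalities on the simplex. For the upper bound, $\|p\|_2\le \|p\|_1=1$. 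For the lower bound, Cauchy--Schwarz gives $1=\sum_i p_i\le \sqrt d\,\|p\|_2$, so $\|\nabla f(x)\|_2\ge 1/\sqrt d$. Both bounds hold at every $x\in\R^d$, in particular on $\mathcal C$.

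The main technical obstacle is justifying differentiation under the integral sign and the a.e.\ gradient formula for the nonsmooth $h$. The cleanest route is to note that $f$ is the convolution $h*\check\rho_\chi$, where $\check\rho_\chi(z)=\rho_\chi(-z)$. For a Lipschitz function, the distributional gradient equals the a.e.\ gradient (Rademacher), and convolution commutes with distributional differentiation. Together these give both the $C^\infty$ regularity and the simplex formula at once. The support property of $\rho$ in \eqref{defrho} is not needed for this lemma; it presumably matters later, for the zero-chain structure of the hard instance.
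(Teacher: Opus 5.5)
Your proof is correct and follows essentially the same route as the paper: convexity as a nonnegative mixture of convex translates, smoothness by moving derivatives onto the mollifier via $f(x)=\int h(w)\rho_\chi(w-x)\,dw$, and $\nabla f(x)$ as a $\rho_\chi$-average of the a.e.\ gradients $e_{i(y)}$, hence a probability vector with $1/\sqrt d\le\|\nabla f(x)\|_2\le 1$. The only real difference is that you bound the Hessian by continuity on the compact set $\mathcal C$, whereas the paper bounds the second derivatives on all of $\R^d$; for instance, differentiating $\nabla f(x)=\int\nabla h(w)\rho_\chi(w-x)\,dw$ gives $\|\nabla^2 f(x)\|\le\int\|\nabla\rho_\chi(z)\|\,dz$ for every $x$. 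That global form is what the later appeal to Lemma~\ref{lem:regularityradius-hess-grad} actually uses, since its proof applies the descent inequality on balls that can leave $\mathcal C$.
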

\begin{proof}
Since $h$ is convex and $1$-Lipschitz, and since $\rho_\chi$ is a nonnegative kernel with total mass $1$, standard properties of convolution imply that $f$ is also convex and $1$-Lipschitz. Moreover, note that the convolution can be rewritten as $f(x)=\int_{\R^d} h(y)\rho_\chi(y-x)\,dy$. From standard properties of convolution, $f$ is $C^\infty$. In addition, since $h$ is globally Lipschitz and $\rho_\chi$ is smooth and compactly supported, the second-order derivatives of $f$ are bounded on $\R^d$. Hence, $f$ is $L$-smooth for some $L>0$. 

At every point where $h$ is differentiable, its gradient is one of the basis vectors $\{e_1,\dots,e_d\}$ that correspond to a maximizing coordinate. Since $h$ is Lipschitz, it is differentiable almost everywhere, and differentiating \eqref{deff} under the integral sign yields $\nabla f(x)\in \operatorname{conv}\{e_1,\dots,e_d\}$ for all $x\in\R^d$. 
Hence $\nabla f(x)$ is a probability vector. Therefore,  for all $x\in\R^d$
\begin{equation}\label{eq:lowerboundgrad}    
\frac{1}{\sqrt d}\le \|\nabla f(x)\|_2\le 1 \ ,
\end{equation} 
and in particular $\nabla f(x)\neq 0$ everywhere in $\mathcal{C}$.
\end{proof}

Lemma \ref{lemma:welldefine} below shows that Problem \ref{example:hard-instance} belongs to the class of problems we have considered in this section.
\begin{lemma}\label{lemma:welldefine}
    Consider Problem \ref{example:hard-instance}.
    The constraint set $\mathcal{C}$ is nonempty, closed, and convex, and the preference relation $\preccurlyeq$ is convex, plateau-free, and regular at every $x\in \mathcal{C}$. The optimal solution set is on the boundary of $\mathcal{C}$ and $\dist(x_1,\X^\star) = D_1$.
\end{lemma}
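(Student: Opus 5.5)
The plan is to check the claims in increasing order of difficulty. The constraint set $\mathcal{C}=B(0,D_1)$ is a closed Euclidean ball, so it is nonempty, closed and convex. Convexity of $\preccurlyeq$ follows from Lemma~\ref{lm:property-of-f}: every sublevel set $\calS_x=\{y: f(y)\le f(x)\}$ of the convex function $f$ is convex. For regularity, Lemma~\ref{lm:property-of-f} gives $L$-smoothness on $\mathcal C$ and $\|\nabla f(x)\|\ge 1/\sqrt d>0$. Lemma~\ref{lem:regularityradius-hess-grad} then applies at every $x\in\mathcal C$ and gives $r_x\ge \|\nabla f(x)\|/L\ge 1/(L\sqrt d)>0$. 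One subtlety is that Lemma~\ref{lem:regularityradius-hess-grad} assumes global $L$-smoothness. The proof of Lemma~\ref{lm:property-of-f} in fact shows that the Hessian is bounded on all of $\R^d$, so I will use the global version.

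Plateau-freeness needs $\partial\calS_x=\calL_x$ for $x\notin\X^\star$. The set $\calS_x$ is closed by continuity, so $\partial\calS_x\subseteq\calL_x$. For the reverse inclusion, take $y$ with $f(y)=f(x)$. Since $\nabla f(y)\neq 0$ (the gradient is a probability vector everywhere on $\R^d$), the points $y-t\nabla f(y)$ lie in $\operatorname{int}\calS_x$ and the points $y+t\nabla f(y)$ lie outside $\calS_x$ for small $t>0$. Hence $y\in\partial\calS_x$. Plateau-freeness is a statement on all of $\X$, and this argument works globally because $\nabla f$ never vanishes.

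The main step is to locate $\X^\star$. Since $\nabla f\neq0$ everywhere, $f$ has no unconstrained minimizer in the interior of $\mathcal C$. Because $\mathcal C$ is compact, minimizers exist, and they must lie on the sphere $\|x\|=D_1$. This gives $\X^\star\subseteq\partial\mathcal C$ and hence $\dist(0,\X^\star)=D_1$, since every point of the sphere has norm exactly $D_1$. If needed, I would also verify directly that the KKT condition forces $x^\star=-D_1\nabla f(x^\star)/\|\nabla f(x^\star)\|$, but the distance claim does not require it.

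The main obstacle I expect is the justification that $\nabla f(x)\in\operatorname{conv}\{e_i\}$ everywhere, which also yields global nonvanishing. This is already supplied by Lemma~\ref{lm:property-of-f}, which I will cite. The only other point needing care is confirming that the global smoothness constant exists, which follows from $f=h*\rho_\chi$ with $h$ Lipschitz: the Hessian equals $\int \nabla h(y)\,\nabla\rho_\chi(y-x)^\top dy$, which is bounded by $\|\nabla\rho_\chi\|_{L^1}$.
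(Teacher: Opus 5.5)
Your proposal is correct and follows the paper's proof: convexity of $f$ gives convexity of $\preccurlyeq$, the nonvanishing gradient gives plateau-freeness and rules out interior minimizers, and Lemma~\ref{lem:regularityradius-hess-grad} gives regularity. You also fill in details the paper leaves implicit, and all of them are sound: existence of minimizers by compactness of $\mathcal{C}$, the explicit two-sided boundary argument for $\partial\calS_x=\calL_x$ (more precise than the paper's ``smooth hypersurface with empty interior'' remark), and the observation that Lemma~\ref{lem:regularityradius-hess-grad} needs smoothness on all of $\X$, which the globally bounded Hessian supplies (your Hessian formula is missing a minus sign, but this does not affect the bound).
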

\begin{proof}
    Recall the properties of $f$ established in Lemma \ref{lm:property-of-f}. Convexity of $\preccurlyeq$ follows from convexity of $f$. Since $\nabla f(x)\neq 0$ at every point, each nonoptimal function level set \(\{u:f(u)=f(x)\}\), equivalently each induced preference level set \(\calL_x\),  is a smooth hypersurface. In particular it has empty interior, so the preference is plateau-free. Also, because $f$ is $L$-smooth for some $L>0$ and $\nabla f(x)\neq 0$ for every
    $x\in\mathcal{C}$, Lemma~\ref{lem:regularityradius-hess-grad} implies that $\preccurlyeq$ is regular at every $x\in\mathcal{C}$.
    Finally, $f$ has no interior minimizer on $\mathcal{C}=B(0,D_1)$, since any interior minimizer of
     the differentiable convex function $f$ would satisfy $\nabla f(x)=0$, which is impossible. Hence every optimal point lies on $\partial B(0,D_1)$, and therefore $\dist(x_1,\X^\star)=\dist(0,\X^\star)=D_1.$
\end{proof}

Furthermore, Lemma \ref{lemma:hard-instance-properties} below shows a certain pattern of zero components of normal directions that will be key in the lower bound analysis.

\begin{lemma}\label{lemma:hard-instance-properties}
Consider Problem \ref{example:hard-instance}.
   For all $\bar{x}\in\mathcal{C}$ such that $\bar{x}_r=\bar{x}_{r+1}=\cdots=\bar{x}_d=0$ for some $r<d$, the corresponding normal directions satisfy $(n_{\bar{x}})_j=0$ for all $j>r$.
\end{lemma}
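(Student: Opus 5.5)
Since $f$ is differentiable with $\nabla f(\bar x)\neq 0$ on $\mathcal C$ (Lemma~\ref{lm:property-of-f}), Lemma~\ref{lem:regularityradius-hess-grad} gives $n_{\bar x}=\nabla f(\bar x)/\|\nabla f(\bar x)\|$. It therefore suffices to show $\partial_j f(\bar x)=0$ for every $j>r$. I will obtain this by differentiating under the integral in \eqref{deff}, exactly as in the proof of Lemma~\ref{lm:property-of-f}, which gives
\[
\nabla f(\bar x)=\int_{\R^d}\nabla h(\bar x+z)\,\rho_\chi(z)\,dz .
\]
Here $\nabla h(y)=e_{i(y)}$ wherever the maximizing coordinate $i(y)$ of $y$ is unique. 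Thus $\partial_j f(\bar x)$ equals the $\rho_\chi$-measure of the set of $z$ for which $j$ is the unique argmax of $\bar x+z$, and ties occur only on a null set.

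The key step is to show that for every $z$ in the support of $\rho_\chi$, the argmax of $\bar x+z$ is never an index $j>r$. On the support we have $0<z_d<z_{d-1}<\cdots<z_1<\chi$. Take $j>r$. Because $\bar x_j=0=\bar x_r$ and $r<j$, we get
\[
(\bar x+z)_r=z_r>z_j=(\bar x+z)_j .
\]
So index $r$ strictly beats every index $j>r$, and no $j>r$ can attain the maximum. Hence $\nabla h(\bar x+z)$ has zero components $j>r$ for almost every $z$ in the support, and integrating gives $\partial_j f(\bar x)=0$ for all $j>r$.

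The only technical point is justifying the interchange of differentiation and integration. Since $h$ is $1$-Lipschitz, its difference quotients are bounded by $1$ and converge almost everywhere, and $\rho_\chi$ is a probability density, so dominated convergence applies. This is the step the paper already uses in Lemma~\ref{lm:property-of-f}, so I do not expect a real obstacle. Normalizing $\nabla f(\bar x)$ then preserves these zero components, which gives $(n_{\bar x})_j=0$ for $j>r$.
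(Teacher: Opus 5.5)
Your proposal is correct and follows essentially the same route as the paper: identify $n_{\bar x}$ with $\nabla f(\bar x)/\|\nabla f(\bar x)\|$, differentiate the convolution under the integral, and use the support ordering $z_r>z_j$ together with $\bar x_r=\bar x_j=0$ to show that no index $j>r$ attains the maximum, so $\partial_j f(\bar x)=0$. The one addition is your dominated-convergence justification for exchanging differentiation and integration, which the paper leaves implicit.
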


\begin{proof}
For every $j>r$ and every $y$ satisfying $\rho_\chi(y)>0$ we have
$(\bar{x}+y)_r=\bar{x}_r+y_r=y_r>y_j=\bar{x}_j+y_j=(\bar{x}+y)_j$. Here the inequality is due to the support of $\rho_\chi(\cdot)$ and the condition \eqref{defrho}. 
Therefore, for almost every such $y$, the 
$j$th component of  $\nabla h(\bar{x}+y)$ is zero. Computing the derivative of \eqref{deff}  yields for all $j > r$,  $(\nabla f(\bar{x}))_j=0$.
By Lemma~\ref{lm:property-of-f}, we have $\|\nabla f(x)\|_2>0$ for all $x\in\mathcal{C}$.
Since the preference relation is realized by the differentiable function $f$, the outward normal satisfies
$ n_x=\frac{\nabla f(x)}{\|\nabla f(x)\|_2}$ for all $x\in\mathcal{C}$.
Combining this with $(\nabla f(\bar{x}))_j=0$ for $j > r$ gives $(n_{\bar{x}})_j=0$ for all $j>r$, which completes the proof.
\end{proof}

Below we show the lower bound for normal direction span-based methods on Problem \ref{example:hard-instance}.

\begin{theorem}[Lower bound of normal direction span-based methods]
\label{thm:ngd-exact-lower-bound}
For Problem \ref{example:hard-instance}, every $K$-step normal direction span-based method (Definition \ref{def:normal-direction-span-method}) starting from $x_1 = z_1 = 0$ satisfies
\begin{equation}
\label{eq:exact-ngd-lb-main}
\DeltaLS(\hat x)
 \ge
\frac{3\,\dist(x_1,\X^\star)}{4\sqrt{K+1}} \ .
\end{equation} 
\end{theorem}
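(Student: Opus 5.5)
The plan is the classical Nemirovski--Nesterov resisting-oracle argument. It has two parts. First, a span/support induction shows that every feasible iterate of a span-based method has vanishing last coordinate. Second, a direct estimate shows that any feasible point whose last coordinate is zero has level-set optimality gap at least $\frac{3D_1}{4\sqrt d}$. Since $d=K+1$ and $\dist(x_1,\X^\star)=D_1$ by Lemma~\ref{lemma:welldefine}, this gives \eqref{eq:exact-ngd-lb-main}.

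\emph{Step 1 (support induction).} I would prove by induction on $t=1,\dots,K+1$ that $z_t$ and $x_t$ are supported on the first $t-1$ coordinates, i.e.\ $(z_t)_j=(x_t)_j=0$ for all $j\ge t$. The base case holds because $z_1=x_1=0$.

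For the inductive step, fix $s\le t\le K$. The point $x_s$ satisfies $(x_s)_s=\dots=(x_s)_d=0$ with $s<d$, so Lemma~\ref{lemma:hard-instance-properties} (applied with $r=s$) gives $(n_s)_j=0$ for $j>s$. Projection onto $\mathcal C=B(0,D_1)$ is a positive rescaling, $\Pi_{\mathcal C}(z)=z\min\{1,D_1/\|z\|\}$. Hence $x_s-z_s$ is a multiple of $z_s$ and has the same support as $z_s$.

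The update \eqref{eq:general-update} then places $z_{t+1}$ in the span of vectors supported on the first $t$ coordinates. The point $x_{t+1}$ is a rescaling of $z_{t+1}$, so it has the same support. In particular, every candidate output $\hat x\in\{x_1,\dots,x_{K+1}\}$ satisfies $\hat x_d=0$, because $d=K+1$.

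\emph{Step 2 (value comparison).} The kernel $\rho_\chi$ is supported on $\{0<z_d<\dots<z_1<\chi\}$, so for $x$ and $z$ in that support we have $h(x)\le h(x+z)< h(x)+\chi$. Integrating against $\rho_\chi$ gives
\[
h\le f\le h+\chi .
\]
Consequently $f(\hat x)\ge h(\hat x)\ge \hat x_d=0$.

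For the optimal value, I would use the feasible point $\bar x:=-\frac{D_1}{\sqrt d}\mathbf 1\in\mathcal C$. It gives $f(x^\star)\le f(\bar x)\le -\frac{D_1}{\sqrt d}+\chi=-\frac{3D_1}{4\sqrt d}$ for every $x^\star\in\X^\star$. In particular $\hat x\notin \X^\star$.

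\emph{Step 3 (distance bound).} By Lemma~\ref{lm:property-of-f}, $\|\nabla f\|_2\le 1$ everywhere, so $f$ is $1$-Lipschitz in $\|\cdot\|_2$ on $\R^d$ (the proof there gives the gradient bound on all of $\R^d$). Take any $y\in\calL_{\hat x}$ and any $x^\star\in\X^\star$. Then $f(y)=f(\hat x)\ge 0$, and therefore
\[
\|y-x^\star\|\ \ge\ f(y)-f(x^\star)\ \ge\ \frac{3D_1}{4\sqrt d}.
\]
Taking the infimum over $y$ and $x^\star$ gives $\DeltaLS(\hat x)\ge \frac{3D_1}{4\sqrt{K+1}}$.

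The main obstacle is Step 1, specifically the bookkeeping of indices so that Lemma~\ref{lemma:hard-instance-properties} is applied with $r=s<d$ at every stage, including the last one. The other delicate point is that the projection directions $x_s-z_s$ must not leak onto new coordinates; this is exactly where the choice of a ball centered at the origin is used.
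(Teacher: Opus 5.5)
Your proof is correct and follows the paper's argument essentially step for step. It uses the same zero-chain induction, applying Lemma~\ref{lemma:hard-instance-properties} with $r=t\le K<d$ and the radial form of $\Pi_{\mathcal C}$; the same comparison point $-\frac{D_1}{\sqrt d}\mathbf 1$ to get $f^\star\le -\frac{3D_1}{4\sqrt d}$; and the same global $1$-Lipschitz argument turning the value gap into $\DeltaLS(\hat x)\ge \frac{3D_1}{4\sqrt{K+1}}$. The only cosmetic difference is that you obtain $f(\hat x)\ge 0$ from the sandwich $h\le f\le h+\chi$, whereas the paper obtains $f(x_t)>0$ directly from $h(x_t+z)\ge z_d>0$.
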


\begin{proof}

We first prove that for the iterates generated by any normal direction span-based method on this instance: for every $t=1,\dots,K+1$,
\begin{equation}
\label{eq:exact-ngd-lb-zero-chain}
 x_t,\ z_t\in\mathrm{span}\{e_1,\dots,e_{t-1}\} \ . 
\end{equation}
The base case $t=1$ is immediate since $z_1= x_1=0$. Suppose
\eqref{eq:exact-ngd-lb-zero-chain} holds for every $i=1,\dots,t$. 
Then $x_t\in\mathrm{span}\{e_1,\dots,e_{t-1}\}$, so its coordinates $t,\dots,d$ are zero. By Lemma~\ref{lemma:hard-instance-properties}, we have $n_t\in\mathrm{span}\{e_1,\dots,e_t\}$.
More generally, for every $i\le t$, the induction hypothesis gives
$x_i\in\mathrm{span}\{e_1,\dots,e_{i-1}\}$, so Lemma~\ref{lemma:hard-instance-properties} implies
$n_i\in\mathrm{span}\{e_1,\dots,e_i\}\subseteq \mathrm{span}\{e_1,\dots,e_t\}$.
Also, for every $i\le t$, the induction hypothesis implies that both $z_i$ and $x_i$ belong to
$\mathrm{span}\{e_1,\dots,e_{t-1}\}$, hence so does the correction direction $x_i-z_i$.
The update rule \eqref{eq:general-update} therefore yields
$z_{t+1}\in\mathrm{span}\{e_1,\dots,e_t\}$. Since $x_{t+1}=\Pi_{\mathcal{C}}(z_{t+1})$ is a radial rescaling of $z_{t+1}$, we also have $x_{t+1}\in\mathrm{span}\{e_1,\dots,e_t\}$.  This completes the induction and proves \eqref{eq:exact-ngd-lb-zero-chain}.

In particular, $(x_t)_d=0$ for every $t$. For any $z$ satisfying $\rho_\chi(z)>0$, due to \eqref{defrho} we have
$z_d>0$, and therefore $ h(x_t+z)\ge (x_t+z)_d=z_d>0.$
Averaging over $z$ gives
\begin{equation}
\label{eq:exact-ngd-lb-positive-values}
 f(x_t)>0
 \qquad\text{for every }t=1,\dots,K+1.
\end{equation}
Hence any output $\hat x\in\{ x_1,\dots,x_{K+1}\}$ satisfies $f(\hat x)>0$.

Now set   $y:=(-\tfrac{D_1}{\sqrt d},\dots,-\tfrac{D_1}{\sqrt d})$. Then $\|y\|_2=D_1$, so $y\in\mathcal{C}$. If
$\rho_\chi(z)>0$, then $0<z_i<\chi=\tfrac{D_1}{4\sqrt d}$ for every $i$, and therefore
\[
 h(y+z)=\max_{1\le i\le d}(y_i+z_i)\le -\tfrac{D_1}{\sqrt d}+\chi=-\frac{3D_1}{4\sqrt d}.
\]
By \eqref{deff}, we obtain $ f(y)\le -\frac{3D_1}{4\sqrt d}.$
If $f^\star:=\min_{u\in\mathcal{C}} f(u)$, then $f^\star\le f(y)$, and together with
\eqref{eq:exact-ngd-lb-positive-values} this yields
\begin{equation}
\label{eq:exact-ngd-lb-value-gap}
 f(\hat x)-f^\star\ge \frac{3D_1}{4\sqrt d}.
\end{equation}

Finally, because $f$ is globally $1$-Lipschitz, for every $u\in\calL_{\hat x}$ and every
$x^\star\in\X^\star$ we have $ f(\hat x)-f^\star=f(u)-f(x^\star)\le \|u-x^\star\|_2.$ Taking the infimum over $u\in\calL_{\hat x}$ and $x^\star\in\X^\star$ gives $ \DeltaLS(\hat x)\ge f(\hat x)-f^\star.$ Combining this with \eqref{eq:exact-ngd-lb-value-gap} and using $d=K+1$ and $\dist(x_1,\X^\star) = D_1$ proves $ \DeltaLS(\hat x)\ge \frac{3D_1}{4\sqrt{K+1}},$ as claimed.
\end{proof}

Theorem~\ref{thm:ngd-exact-lower-bound} shows that the $O(\dist(x_1,\X^\star) /\sqrt{K})$ dependence in Theorem~\ref{thm:ngd-fixed-radius-main} is sharp for normal direction span-based methods, and that Theorem~\ref{thm:pf-oracle-two-case} is sharp up to the logarithmic factor in
\eqref{eq:pf-ngd-osne-bound}. Given that this lower bound remains for exact normal direction oracles, one may conclude that if the normal directions are estimated with some error (for example, using Algorithm~\ref{alg:OSNE}), then the $O(\dist(x_1,\X^\star) /\sqrt{K})$ dependence is still unimprovable.

This hard instance is inspired by the classic lower-bound constructions for first-order methods on convex Lipschitz functions; see, e.g.,
\cite{nesterov2004introductory}. In both cases, the key mechanism is a zero chain: each query of normal direction reveals at most one new coordinate.
Our results differ in two ways. First, the guarantee here is formulated in terms of the level-set optimality gap $\DeltaLS$ rather than
in terms of a function-value gap. Second, the classical nonsmooth hard instance does not automatically fit our regularity assumption of preference level sets. The convolution \eqref{deff} using kernel $\rho_\chi$ preserves the zero-chain structure while producing a preference relation whose level sets are regular.

This instance also satisfies the local growth condition. 
Let \(L\) be a smoothness constant of \(f\) in $\mathcal{C}$. By Lemma~\ref{lm:property-of-f}, \(\|\nabla f(x)\|_2\ge 1/\sqrt d\) for all \(x \in \mathcal{C}\). This is an instance of Example~\ref{ex:curv-bounded-feasible-nonvanishing-gradient}, so Condition~\ref{cond:radius-growth} holds with, for example,
$
(\gamma_1,\gamma_2)=\left(1,\frac{1}{L\sqrt d}\right).
$

\section{Summary, remarks and future directions}
\label{sec:conclusion}
 
This paper proposes a function-free optimization framework for optimizing under preference relations. Instead of assuming an application-given objective function, we study the preference relation and its level-set geometry directly. We introduced the level-set optimality gap as a measure of optimality and the regularity radius as a measure of stationarity. Under a regularity condition on the preference relation, we showed how to estimate normal directions from pairwise comparisons with near-optimal comparison complexity. Under convexity and a local growth condition on the regularity radius, these normal direction estimates lead to NDD and adaNDD, whose dependence on the number of normal direction steps nearly matches lower bounds for normal-direction span-based methods up to logarithmic factors. The resulting comparison complexity is $\widetilde O(dD^2/\epsilon^2)$.
 
Although the paper is established under Euclidean space $\X$, we expect the same theory holds for any finite-dimensional real Hilbert space. The main objects used in the paper, including preference level sets, distances, normal directions, angles, separating halfspaces and orthonormal bases, depend only on the inner product and its induced norm, and hold also under real  Hilbert space.   
 
Several questions remain open for future research. One direction is to study NDD and adaNDD for preference relations without convexity assumptions. More broadly, it would be interesting to develop a nonconvex function-free optimization theory that does not rely on a particular objective function. For example, one may ask under what comparison-level assumptions and within how many comparisons an algorithm can guarantee a point that is preferred to some feasible point with sufficiently small regularity radius. 
 
Another direction is to study comparison complexity lower bounds for function-free optimization. The lower bounds established in this paper are for normal direction estimation and for the number of normal direction steps among normal-direction span-based methods. It remains open to determine the minimax number of pairwise comparisons required for function-free optimization, and develop a method that achieves this comparison complexity.


\section*{Statements and Declarations}
 This work was partially supported by ONR Award \#N00014-25-1-2088. 
The authors have no competing interests to declare that are relevant to the content of this article. 

\begin{small}
\bibliographystyle{plain}
\bibliography{references}
\end{small}

\end{document}